\documentclass[11pt]{article}
\usepackage{amsmath}
\usepackage{amsfonts}
\usepackage{latexsym}
\usepackage{amsthm}
\usepackage{amssymb}
\usepackage{enumerate}
\usepackage{hyperref}


\usepackage{mathtools}
\mathtoolsset{showonlyrefs}

\newcommand{\al}[1]{
  \begin{align}
  #1
  \end{align}
}

\newcommand{\E}[1]{\mathbb{E} \left[#1\right]}
\newcommand{\fl}[1]{\left\lfloor #1\right\rfloor}
\newcommand{\cl}[1]{\left\lceil #1\right\rceil}

\def\dist{\mathrm{dist}}
\def\CD{\mathrm{CD}}

\def\bp{{\bf p}}
\def\bd{{\bf d}}
\def\bi{{\bf i}}

\def\AA{\mathcal{A}}
\def\BB{\mathcal{B}}
\def\CC{\mathcal{C}}
\def\EE{\mathcal{E}}
\def\FF{\mathcal{F}}
\def\GG{\mathcal{G}}
\def\HH{\mathcal{H}}
\def\NN{\mathcal{N}}
\def\PP{\mathcal{P}}
\def\QQ{\mathcal{Q}}
\def\SS{\mathcal{S}}
\def\TT{\mathcal{T}}

\def\ol{\overline}

\def\F{\Phi}
\def\a{\alpha}
\def\b{\beta}
\def\d{\delta}
\def\D{\Delta}
\def\e{\varepsilon}
\def\f{\phi}
\def\vf{\varphi}
\def\g{\gamma}
\def\G{\Gamma}
\def\k{\kappa}
\def\th{\theta}
\def\lam{\lambda}
\def\m{\mu}
\def\r{\rho}
\def\s{\sigma}
\def\t{\tau}
\def\om{\omega}
\def\OM{\Omega}
\newcommand\Prob[1]{{\mbox{Pr}\left\{#1\right\}}}
\newcommand{\bfrac}[2]{\left({\frac{#1}{#2}}\right)}

\newtheorem{theorem}{Theorem}
\numberwithin{theorem}{section}
\newtheorem{proposition}[theorem]{Proposition}
\newtheorem{lemma}[theorem]{Lemma}
\newtheorem*{lemma*}{Lemma}
\newtheorem{corollary}[theorem]{Corollary}
\newtheorem{definition}[theorem]{Definition}
\newtheorem{claim}{Claim}
\numberwithin{claim}{section}
\newtheorem{subclaim}{Subclaim}
\newtheorem*{open}{Open Problem}
\newtheorem{conjecture}{Conjecture}

\title{A condition for Hamiltonicity in Sparse Random Graphs with a Fixed Degree Sequence}

\author{Tony Johansson\thanks{Funded by the Swedish Research Council (grant 2015-05015).} \\[.2cm]
  {\small Stockholm University} \\
  {\small Stockholm, Sweden}
}

\begin{document}
\maketitle

\begin{abstract}
We consider the random graph $G_{n, \bd}$ chosen uniformly at random from the set of all graphs with a given sparse degree sequence $\bd$. We assume $\bd$ has minimum degree at least 4, at most a power law tail, and place one more condition on its tail. For $k\ge 2$ define $\b_k(G) = \max e(A, B) + k(|A|-|B|) - d(A)$, with the maximum taken over disjoint vertex sets $A, B$. It is shown that the problem of determining if $G_{n, \bd}$ contains a Hamilton cycle reduces to calculating $\b_2(G_{n, \bd})$. If $k\ge 2$ and $\d\ge k+2$, the problem of determining if $G_{n, \bd}$ contains a $k$-factor reduces to calculating $\b_k(G_{n, \bd})$.
\end{abstract}

\section{Introduction}

We consider the Fixed Degree Sequence random graph $G_{n, \bd}$, defined as follows. Suppose $\bd_n = (d_1^{(n)} \ge \dots \ge d_n^{(n)})$ is a sequence of non-negative integers, and let $\bd = (\bd_n)$ be a sequence of degree sequences. We let $\GG_{n, \bd}$ be the set of (simple) graphs on $V_n = \{v_1,\dots,v_n\}$ where $d(v_i) = d_i^{(n)}$ for all $i$. Let $G_{n, \bd}$ be the graph chosen uniformly at random from $\GG_{n, \bd}$.

Our main concern in this paper is the problem of determining whether or not $G_{n, \bd}$ contains a Hamilton cycle, i.e. a cycle passing through each vertex exactly once. Frieze's recent survey \cite{Frieze19} is an excellent introduction to the problem. Finding Hamilton cycles in random graphs has been an active research area since Erd\H{o}s and R\'enyi first posed the question in 1960 \cite{ErdosRenyi60} for the graph $G_{n, m}$ chosen uniformly at random from graphs on $n$ vertices and $m$ edges. This was later solved by Kor\v{s}unov \cite{Korsunov} and made more precise by Koml\'os and Szemer\'edi \cite{KomlosSzemeredi}, with many extensions in subsequent years. The Erd\H{o}s-R\'enyi graph $G_{n, m}$ requires $m = \OM(n\ln n)$ edges to be Hamiltonian, essentially because this is the number of edges needed for the minimum degree to be at least 2.


Hamiltonicity has been studied and solved in other random graph models. The $d$-out graph is given by each vertex $v\in V$ choosing a set of $d$ vertices $A(v)\subseteq V$ uniformly  at random, and letting $G_{d-out}$ be the graph where the edge $uv$ is included if and only if $u\in A(v)$ or $v\in A(u)$. Whp\footnote{A sequence of events $\EE_n$ happens {\em with high probability (whp)} if $\Prob{\EE_n}\to 1$.}, $G_{d-out}$ is Hamiltonian if and only if $d\ge 3$ \cite{BohmanFrieze09}. The random $d$-regular graph is a special case of $G_{n, \bd}$ where $d_i^{(n)} = d$ for all $i, n$. This is known to be Hamiltonian whp if and only if $d\ge 3$ \cite{RobinsonWormald,CooperFriezeReed,KrivelevichSudakovVuWormald}. Note that for fixed $d$ these models are sparse, i.e. contain $O(n)$ edges. Notable unsolved models are the Preferential attachment model and related Uniform attachment model, where it is known that Hamiltonicity holds whp for $\d\ge D$ for some large constant $D$ \cite{FPPR}.

As the examples above suggest, the minimum degree of a random graph is one of the main obstructions to its Hamiltonicity. Motivated by this one may study $G_{n, m}$ conditional on having minimum degree $d$. Define a model $G_{n, m}^{(d)}$ by
$$
\Prob{G_{n, m}^{(d)} = G} = \Prob{G_{n, m} = G \mid \d(G_{n, m})\ge d},\quad \d(G) \ge d.
$$
Following work by Bollob\'as, Cooper, Fenner and Frieze \cite{BollobasCooperFennerFrieze}, Frieze \cite{Frieze14} and most recently by Anastos and Frieze \cite{AnastosFrieze19}, it is known among other things that $G_{n, cn}^{(3)}$ is Hamiltonian whp if $c > 2.66\dots$. In his survey, Frieze asks if $G_{n, cn}^{(d)}$ contains a Hamilton cycle whenever $c > d/2$ \cite[Problem 4]{Frieze19}. The current paper addresses this problem, as $G_{n, cn}^{(d)}$, conditional on its degree sequence, is a special case of $G_{n, \bd}$.

For $d\ge 1$ the {\em $d$-core} of a graph $G$, denoted $C_d(G)$ is the largest possible induced subgraph of $G$ with minimum  degree at least $d$. The problem of determining if $C_d(G_{n, m})$ is Hamiltonian essentially reduces to studying $G_{n, cn}^{(d)}$. Krivelevich, Lubetzky and Sudakov \cite{KrivelevichLubetzkySudakov} showed that the $d$-core is Hamiltonian whp when $d\ge 15$, and as the authors remark it is believed that $d\ge 3$ is enough.

It should be noted that many recent results on Hamiltonicity, notably \cite{BollobasCooperFennerFrieze} and \cite{KrivelevichLubetzkySudakov} among the ones mentioned, extend to the existence of several edge-disjoint Hamilton cycles. The current paper is restricted to a single Hamilton cycle.

Circling back to the model of interest, namely $G_{n, \bd}$, the best known result so far for general (non-regular) $\bd$ is given by Cooper, Frieze and Krivelevich \cite{CooperFriezeKrivelevich}. Under relatively complicated conditions on $\bd$, in particular assuming that the average degree is large enough, they show that $G_{n, \bd}$ is Hamiltonian whp. Gao, Isaev and McKay \cite{GaoIsaevMckay} prove Hamiltonicity when $\bd$ is near $d$-regular for some $d\gg \ln n$. Frieze's survey asks  for a simple function $\f$ such that $G_{n, \bd}$  is Hamiltonian whp if and only if $\f(\bd) > 0$ \cite[Problem 24]{Frieze19}. We are not able to fully answer this question, but reduce it to the open problem below.

As noted in \cite{CooperFriezeKrivelevich}, a minimum degree condition on $\bd$ is not enough to guarantee Hamiltonicity in $G_{n, \bd}$, even with bounded maximum degree. Indeed, suppose we take $\bd$ to have a set $A$ of $2n/3$ vertices of degree $\d$ and a set $B$ of $n/3$ vertices of degree $D$ for some large $D$. Then as $D$ grows, the expected number of edges in $A$ is $\e_Dn$ for some $\e_D\to 0$. As a Hamilton cycle must have at least $n/3$ edges contained in $A$, there exists some finite $D$ such that $G_{n, \bd}$ is non-Hamiltonian with some positive probability.

In order for $G_{n, \bd}$ to be Hamiltonian, some restriction needs to be put on the shape of $\bd$, ensuring that edge densities are not too skewed as in the above example. For disjoint vertex sets $A, B$ and $k\ge 2$ we define
$$
\b_k(A, B) = \max_{A\cap B = \emptyset} e(A, B) + k(|A| - |B|) - d(A),
$$
and define $\b_k(G)$ as the maximum value of $\b_k(A, B)$ over all disjoint sets $A, B$. We say that $G$ is {\em ($k$-)balanced} if $\b_k(G) \le 0$ and {\em ($k$-)unbalanced} if $\b_k(G) > 0$. It is easy to see (Proposition~\ref{prop:window}) that 2-unbalanced graphs are not Hamiltonian. Our main result states that if $\bd$ has minimum degree at least 4 and is such that $G_{n, \bd}$ is {\em strongly 2-balanced}, meaning $\b_2(A, B) \le -\th |A|$ for all $A, B$ for some $\th > 0$, then $G_{n, \bd}$  is Hamiltonian whp, with some additional mild conditions on $\bd$. 
\begin{open}\label{open:balance}
For $k\ge 2$, determine for which degree sequences $\bd$ the graph $G_{n, \bd}$ is strongly $k$-balanced whp.
\end{open}

\subsection{Main result}\label{sec:mainresult}

Before stating the main result we need to discuss degree sequences. For a degree sequence $\bd_n = (d_1^{(n)} \ge \dots \ge d_n^{(n)})$, define its associated degree profile $\bp_n = \bp(\bd_n) = (p_d^{(n)} : d \ge 0)$ by letting $p_d^{(n)}$ equal the number of $i$ for which $d_i^{(n)} = d$.

We will generally refer to the sequence of degrees sequences $\bd = (\bd_n)$ itself as a degree sequence. This is consistent with how random graph sequences are commonly called random graphs, and should cause no confusion.

The minimum degree $\d$ of $\bd$ is defined as
$$
\d = \inf_n d_n^{(n)},
$$
while the maximum degree is defined as a function of $n$, namely $\D_n = d_1^{(n)}$. If $\sup_n \D_n < \infty$ we say that $\bd$ is {\em bounded}, otherwise it is {\em unbounded}. Define for $s \in \{1,\dots,n\}$,
$$
D(s) = \max_{|S| = s} \sum_{v\in S} d(v).
$$
\begin{definition}\label{def:seqprops}
Suppose $\bd$ is a degree sequence with associated profile $\bp$.
\begin{enumerate}[(a)]
\item The total number of edges of $\bd$ is $m(\bd) = \frac12\sum_{i=1}^n d_i^{(n)}$. The sequence $\bd$ is {\em sparse} if $\limsup_{n} n^{-1}m(\bd) < \infty$.
\item The sequence $\bd$ is {\em linearly unbounded} if for every integer $D\ge \d$ there exists some $\e > 0$ such that
$$
\liminf_{n\to\infty} \frac1n \sum_{d \ge D} p_d^{(n)} \ge \e.
$$
\item The sequence $\bd$ is {\em light-tailed} if $d_1^{(n)} \le n^{1/6}$ and there exist constants $C > 0, 1/2 < \a < 1$ such that for all $t$ and $n$,
\begin{equation}\label{eq:lighttail}
D(t) \le Cn\bfrac{t}{n}^{\a}
\end{equation}
\item\label{def:degbal} Let $k\ge 1$. The sequence $\bd$ is {\em strongly $k$-balanced} if $G_{n, \bd}$ is strongly $k$-balanced with high probability.
\end{enumerate}
\end{definition}
\noindent Any of these properties held by $\bd$ is said to be held by its associated profile $\bp$ as well. Condition \eqref{eq:lighttail} can be less formally stated as the degree sequence, at worst, following a power law $p_d \sim d^{-\a}$ with $\a > 2$. We can now state our main theorem. 
\begin{theorem}\label{thm:main}
Suppose $\bd$ has minimum degree $\d\ge 4$ and is linearly unbounded, sparse and light-tailed. 
\begin{enumerate}[(i)]
\item If $\bd$ is strongly 2-balanced then $G_{n, \bd}$ is Hamiltonian whp,
\item Let $2\le k\le \d-2$. If $\bd$ is strongly $k$-balanced, then $G_{n, \bd}$ contains a $k$-factor whp.
\end{enumerate}
\end{theorem}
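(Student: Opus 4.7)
My plan is to work in the configuration (pairing) model $G^*_{n, \bd}$, passing whp statements to $G_{n, \bd}$ via the standard conditioning on simplicity; the light-tailed bound $\D_n \le n^{1/6}$ together with sparsity keeps the simpleness probability non-negligible (possibly after a switching-based refinement for the upper tail of $\bd$).

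For part (ii), I would apply the Belck--Tutte $k$-factor criterion: $G$ has a $k$-factor iff for every pair of disjoint sets $A, B \subseteq V$,
\begin{equation*}
  \b_k(A, B) + q_G(B, A) \le 0,
\end{equation*}
where $q_G(B, A)$ counts components of $G - (A \cup B)$ of a certain parity type (derived from the formulation $k|S| - k|T| + d_{G-S}(T) \ge q(S,T)$ by setting $A = T, B = S$). Strong $k$-balance gives $\b_k(A, B) \le -\th |A|$, so it suffices to show $q_G(B, A) \le \th|A|$ whp uniformly in $A, B$. Any component of $G - (A \cup B)$ has all its degree directed into $A \cup B$, and since $\d \ge k+2 \ge 4$ every such component contributes at least $\d$ half-edges to $A \cup B$; a first-moment estimate over choices of $(A, B)$ and offending component profiles, using light-tailedness to control the configuration-model pairings, then rules out large values of $q$.

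For part (i) I would use rotation-extension seeded by a 2-factor $F$ supplied by (ii) with $k = 2$. If $F$ decomposes into $t \ge 2$ cycles, the goal is to use edges of $E(G) \setminus E(F)$ to merge cycles one pair at a time. Cut one edge of some cycle to obtain a path $P_0$, then perform P\'osa rotations in $G$ to grow the rotation-endpoint set $R(P_0)$. Expansion of small sets comes from the standard sparse random graph arguments applicable under $\d \ge 4$, linear unboundedness and light-tailedness; strong 2-balance, applied with $A = S$ and $B = N(S) \setminus S$, yields
\begin{equation*}
  |N(S) \setminus S| \ge (1 + \th/2)|S| - e(S),
\end{equation*}
which drives further growth once $|S|$ is no longer tiny. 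Once $|R(P_0)| = \OM(n)$, with positive probability some endpoint lies on a different cycle $C_j$ and has a usable neighbour in the interior of $P_0$, producing a merging step that reduces $t$. Iterating $O(\log n)$ times produces a Hamilton cycle.

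The main obstacle is the two-round exposure required to implement rotation-merging in the fixed-degree-sequence setting, since edges of $G_{n, \bd}$ are not independent and the $G(n,p)$-style sprinkling argument does not apply directly. I would handle this in one of two ways: (a) in the configuration model, reveal half-edge pairings in stages --- first enough to produce a 2-factor with a bounded number of cycles, then apply a switching estimate to show that the residual pairing is rich enough to supply the merging edges; or (b) on $\GG_{n, \bd}$ itself, use a switching argument to show that, conditional on a realised subgraph $F$, the residual graph $G \setminus F$ is close in total variation to a uniform random simple graph with residual degree sequence $\bd - \bd_F$, inside which Pósa rotations can be implemented. Keeping tight control of the residual degree tail through both rounds is the technical crux, and is precisely where the light-tailed hypothesis is used most heavily.
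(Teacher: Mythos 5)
Your proposal takes a genuinely different route from the paper on both parts, but each branch has a concrete gap.

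For part (i), the P\'osa rotation--extension argument is exactly what the paper goes out of its way to avoid, and for a reason that bites at $\d = 4$. The sparsity the paper can actually certify (and what holds in $G_{n,\bd}$ for a general light-tailed $\bd$) is $(13/12, \g)$-sparsity, i.e.\ $e(S) \le \frac{13}{12}|S|$ for small $S$. With $\d = 4$ this gives only
$$|N(S)\setminus S| \ge d(S) - 2e(S) \ge 4|S| - \tfrac{13}{6}|S| = \tfrac{11}{6}|S|,$$
which is strictly below the $2|S|$ expansion P\'osa needs, and sparse degree sequences with $\d = 4$ really can contain small sets with $e(S)\ge |S|$ that break the rotation bound. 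Your derived bound $|N(S)\setminus S| \ge (1 + \th/2)|S| - e(S)$ from strong $2$-balance only applies to sets with $|A| \ge \t n$ (the definition of $\BB^{(k)}_{\t,\th}$ restricts it to large $A$), and for such $S$ the $e(S)$ term is no longer negligible. The paper sidesteps this by replacing ``the rotation set is large because it expands'' with ``the alternating-walk closure $S_x$ is dense, $e(S_x)\ge \frac{13}{12}|S_x|$, hence by sparsity it must be large'' (Lemma~\ref{lem:posaish}); this density-versus-size trade is the key idea that makes $\d = 4$ feasible and it is absent from your sketch. Likewise, your two candidate resolutions of the re-exposure problem are both plausible in spirit, but neither is the paper's mechanism: the paper constructs an explicit edge-removal Markov chain that preserves minimum degree (the set $\F(G)$ of removable edges with both endpoints of degree in $\{\d+1,\dots,D_n\}$), shows the reverse chain is a near-uniform sprinkler (Proposition~\ref{prop:rerandom}, Lemma~\ref{lem:rerandom}), and this is where the \emph{linearly unbounded} hypothesis is actually used. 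Your sketch never uses linear unboundedness, so something is structurally missing.

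For part (ii), the translation into the Lov\'asz deficiency form $q_G(B,A) + \b_k(A,B)\le 0$ is correct and is a legitimately different angle from the paper, which instead bounds $\lam_k$ directly via the effective-girth estimate in Proposition~\ref{prop:window} and then sprinkles. But the inequality you want, $q_G(B,A) \le \th|A|$, only has leverage where $\b_k(A,B)\le -\th|A|$ is available, i.e.\ for $|A|\ge \t n$. For small $A$ (including $A = \emptyset$, where you need $G$ to be connected with no odd-type components), strong balance gives you nothing, and you would need a separate argument from $\d\ge k+2$ plus sparsity, precisely the role of Lemma~\ref{lem:StoB} in the paper. The first-moment sweep over $(A,B)$ is also delicate: it has to beat $3^n$ choices of $(A,B)$ and the combinatorics of component profiles, and the light-tailed bound alone is not obviously enough for $|A|, |B| = \Theta(n)$. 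This branch could likely be completed, but as written the small-$A$ case is a genuine hole, not just an omitted detail.
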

Theorem~\ref{thm:main} is unsatisfactory in more than one way. Most notably it says nothing about which degree sequences are strongly balanced. We would also like to reduce the minimum degree condition to $\d\ge 3$, and generally expect $G_{n, \bd}$ to contain a $(\d-1)$-factor. Define $\AA_k$ as the property of containing $\fl{(k-1)/2}$ edge-disjoint Hamilton cycles and, if $k$ is odd, one perfect matching disjoint from the Hamilton cycles. We make a conjecture.
\begin{conjecture}\label{conj}
Let $k\ge 2$ and $\d \ge k+1$. Suppose $\bd$ is a sparse, light-tailed and strongly $k$-balanced degree sequence with minimum degree $\d$. Then $G_{n, \bd}$ has property $\AA_{k}$ whp.
\end{conjecture}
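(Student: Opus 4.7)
The plan is to bootstrap Theorem~\ref{thm:main} by peeling off Hamilton cycles one at a time, with the final perfect matching (when $k$ is odd) obtained by a Tutte-type argument on the residual graph. Write $j = \fl{(k-1)/2}$. I would first strengthen the proof of Theorem~\ref{thm:main}(i) to the statement that $G_{n,\bd}$ contains a Hamilton cycle $H$ such that, conditional on $H$, the residual graph $G\setminus H$ is distributed as $G_{n,\bd-2}$ up to total variation error $o(1)$, where $\bd-2$ denotes the degree sequence with every entry reduced by $2$. Granted this coupling, the conjecture follows by induction on $i$: after removing $i$ Hamilton cycles the residual graph is distributed as $G_{n,\bd-2i}$, and Theorem~\ref{thm:main}(i) applied to $\bd-2i$ produces the next cycle. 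For odd $k$ the final step applies Tutte's theorem to $G\setminus(H_1\cup\dots\cup H_j)$, which has minimum degree at least $2$; the matching condition reduces to a $\b_1$-type inequality that should follow from the original strong $k$-balance hypothesis.

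The main technical check is that strong $k$-balance of $\bd$ implies strong $(k-2i)$-balance of the residual graph at each stage. Let $\b_k^{(i)}(A, B)$ denote the quantity evaluated after removing $i$ Hamilton cycles. Since $d^{(i)}(A) = d(A) - 2i|A|$ and $e^{(i)}(A, B) \le e(A, B)$,
\[
\b_{k-2i}^{(i)}(A, B) = e^{(i)}(A, B) + (k-2i)(|A|-|B|) - d^{(i)}(A) \le \b_k(A, B) + 2i|B|.
\]
The extra $2i|B|$ error has to be absorbed into the slack $\th|A|$ from strong balance; this either needs a bound $|B| \le (\th/(4j))|A|$ on the worst-case maximiser, or a sharper estimate showing that the edges removed by $H_1,\dots,H_i$ reduce $e(A, B)$ by an amount proportional to $|B|$ on average, compensating the loss.

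A second obstacle is lowering the minimum degree from $\d\ge 4$ in Theorem~\ref{thm:main} to $\d\ge k+1$, in particular reaching $\d=3$ when $k=2$. This matches Frieze's Problem~4 for $G_{n,cn}^{(3)}$ and the $3$-core conjecture discussed after \cite{KrivelevichLubetzkySudakov}, and is delicate for the same reason: a degree-$(k+1)$ vertex has essentially one edge of freedom outside a $k$-factor, leaving almost no slack for rotation-extension or switching. A natural line of attack is to use linear unboundedness to argue that degree-$(k+1)$ vertices form a sparse, well-spread set, then apply the main-theorem machinery to a core of higher-degree vertices and splice in the degree-$(k+1)$ vertices by local modifications of the cycles.

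The hard part will be combining these two extensions, since the iterative peeling repeatedly drives the minimum degree toward the critical threshold: after $i$ rounds a degree-$(k+1)$ vertex has residual degree $k+1-2i$, which may fall below what the next application of a strengthened Theorem~\ref{thm:main} can tolerate. A cleaner route may be to bypass the peeling entirely by exposing all $j$ edge-disjoint ``budgets'' at once --- for example via a random partition of the half-edges in a configuration-model sense --- and constructing one Hamilton cycle per budget in parallel, with the leftover edges providing the perfect matching for odd $k$. Making such a simultaneous construction rigorous in the tight regime $\d=k+1$ is, I suspect, precisely the reason the statement is posed as a conjecture rather than proved.
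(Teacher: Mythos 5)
This statement is posed in the paper as Conjecture~\ref{conj}, and the paper explicitly does not prove it --- Section~\ref{sec:concluding} is devoted to explaining why the present methods fall short (linear unboundedness is required by the rerandomization machinery, and the $\d\ge 4$ constraint comes from Lemma~\ref{lem:posaish}~(ii)). So there is no proof in the paper to compare against, and you correctly sense this in your closing paragraph.

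Your sketch is therefore necessarily incomplete, and the largest gap is the coupling claim you label as a ``strengthening'' of Theorem~\ref{thm:main}(i): that conditional on a Hamilton cycle $H$ found by the theorem, $G\setminus H$ is within $o(1)$ total variation of $G_{n,\bd-2}$. This is not a strengthening of anything in the paper; it is an independent and substantially harder assertion. The Hamilton cycle produced by the sprinkling argument is not uniformly random among Hamilton cycles of $G$, and even for uniformly random $H$ the residual graph of a fixed-degree-sequence random graph is generally \emph{not} close in total variation to the uniform model on the reduced degree sequence --- that kind of statement, when true, is a contiguity result (cf. the random regular graph literature) and requires small-subgraph conditioning or similar machinery that the paper does not develop. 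The paper's whole rerandomization apparatus in Section~\ref{sec:rerandom} exists precisely because such naive residual couplings are unavailable. Your $\b_{k-2i}^{(i)}(A,B)\le \b_k(A,B)+2i|B|$ computation is fine as arithmetic, and you correctly flag that the $2i|B|$ loss is not obviously absorbable; but even granting that, the induction cannot start without the coupling. Your alternative ``expose all budgets at once in the configuration model'' idea is closer to how one would actually have to argue, and you correctly identify the tight regime $\d=k+1$ as the crux. In short: you have written a well-informed research program, not a proof, and you say so --- which is the right assessment.
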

This conjecture does not assume that $\bd$ is linearly unbounded. In Section~\ref{sec:rerandom} we discuss the limitations of the proof technique used here, which led us to work with linearly unbounded $\bd$.

In conjunction with a satisfactory solution to Open Problem~\ref{open:balance}, proving this conjecture would largely settle the question of Hamiltonicity in fixed degree sequence random graphs. We discuss steps toward extending the present result to Conjecture~\ref{conj} in Section~\ref{sec:concluding}.

\subsection{Proof outline}

On a certain level the proof method should be familiar to readers experienced with Hamiltonicity results in random graphs, and we give a brief outline of the proof here. This outline concerns Hamiltonicity, which is the main focus of the paper, and results on factors can essentially be viewed as a by-product of this proof.

The result will be shown for the fixed-profile graph $G_{n, \bp}$, chosen uniformly at random from the set of graphs with degree profile $\bp$. This can be seen as $G_{n, \bd}$ with the vertices randomly permuted, and this does not change the probability of containing a Hamilton cycle.

The reason for working with $G_{n,\bp}$ is that it can be rerandomized, as we will show. This involves defining a random subgraph $\G\subseteq G = G_{n, \bp}$, obtained by removing $r$ random edges and designed to maintain the same minimum degree as $G$. This is defined in such a way that $G$ is reobtained by adding random edges to $\G$, and we show that the first $s$ such edges are nearly uniform in distribution for some $s < r$.

For a graph $G$ let $\lam_2(G)$ be the minimum value of $n-|F|$ taken over subgraphs $F\subseteq G$ with maximum degree at most 2, so that $\lam_2(G) = 0$ if and only if $G$ contains a 2-factor. We show that $\G$ has a number of properties, defined in terms of certain edge densities and cycle lengths. One of these properties (balance) will directly imply that $\lam_2(\G) = o(s)$. The other properties ensure that there is a set of $\OM(n^2)$ edges $e\notin \G$ such that $\lam_2(\G + e) < \lam_2(\G)$. The first $s/2 \gg \lam_2(\G)$ near-uniform edges we add will with high probability give a graph $\G'$ with $\lam_2(\G') = 0$.

The graph $\G'$ contains a 2-factor, and our graph properties will imply that the number of cycle components is $o(s)$. We argue that there are $\OM(n^2)$ edges $e\notin \G'$ whose addition reduces the optimal component number among 2-factors in $\G'$, and add the remaining $s/2$ edges to obtain a Hamiltonian graph $\G''$. Adding the remaining $r-s$ edges gives a graph which is Hamiltonian and is close to $G_{n, \bp}$ in distribution, and we conclude that $G_{n, \bp}$ is Hamiltonian whp.

A key theme of the proof is defining properties which are either monotone or have good tolerance to edges being added and removed. In conjunction with the minimum degree being maintained, this allows us to prove the existence of the $\OM(n^2)$ edges whose addition improves our graphs. In particular we do not use P\'osa's lemma, which is the traditional tool for accomplishing this.

The paper follows a different path than this outline, and we frequently postpone proofs to the bottom of the paper in order to highlight the high-level proof before going into details.

\section{Sparsity, $k$-matchings, girth and balance}\label{sec:properties}

Some graph concepts need to be defined before we state any results, namely {\em sparsity}, {\em $k$-matchings}, {\em effective girth} and {\em balance}.

Suppose $A, B$ are vertex sets and $v$ a vertex. We let $d(v)$ denote the degree of a vertex, and $d(A) = \sum_{v\in A} d(v)$. The number of edges in $A$ is denoted by $e(A)$, and the number of edges between $A$ and $B$ by $e(A, B)$.

Firstly, for $c, \g > 0$ say that a graph on $n$ vertices is $(c, \g)$-sparse if $e(S) \le c|S|$ for every vertex set $S$ of size at most $\g n$. Define a graph class
\begin{equation}\label{eq:SSdef}
\SS_\g = \{G : \text{ $G$ is $(13/12, \g)$-sparse}\}.
\end{equation}

\subsection{$k$-matchings}

Let $k\ge 1$ be an integer. An edge set $F$ is called a {\em $k$-matching} if $d_F(v) \le k$ for all vertices $v$. We define a {\em $k$-factor} as a $k$-matching with $|F| = kn / 2$, noting that one can exist only if $kn$ is even. Statements about $G_{n, \bp}$ containing a $k$-factor for odd $k$ should be read as holding when restricted to the subsequence of even $n$. We define
\al{
  \lam_k(F) & = \frac{kn}{2} - |F|,
}
and $\lam_k(G)$ as the minimum of $\lam_k(F)$ over $k$-matchings $F\subseteq G$.

\subsection{Effective girth}\label{sec:girth}

\begin{definition}\label{def:girth}
For a vertex $v\in G$ let $c_G(v)$ denote the length of the shortest cycle in $G$ containing $v$. For $k\ge 3$ let $c_k(G)$ denote the number of vertices $v$ with $c_G(v)\le k$. The {\em effective girth} of $G$, denoted $\psi(G)$, is the largest integer $\psi$ for which
$$
\psi c_\psi \le n.
$$
\end{definition}
Note that for any $2$-factor $F$ in a graph $G$, the number of connected components $\k(F)$ satisfies
\begin{equation}\label{eq:girthbound}
\k(F) \le \frac{2n}{\psi(G)}. 
\end{equation}
Indeed, let $F$ be a $2$-factor, and for each vertex $v$ let $C_F(v)$ denote the number of vertices in the connected component of $v$ in $F$. Then $C_F(v) \ge c_G(v)$, so
\al{
  \k(F) = \sum_v \frac{1}{C_F(v)} & \le \sum_v \frac{1}{c_G(v)} \\
  & \le \sum_{v : c(v) \le \psi(G)} \frac{1}{c(v)} + \sum_{v : c(v) > \psi(G)} \frac{1}{\psi(G)} \\
  & \le c_\psi + \frac{n}{\psi(G)} \le 2\times \frac{n}{\psi(G)}.
}
Define the graph class
$$
\CC = \{G : \text{ $G$ has effective girth at least $\ln^{1/2}n$}\}.
$$
\subsection{Balance}\label{sec:balance2}

For disjoint vertex sets $A, B$ and integers $k\ge 1$ define
$$
\beta_k(A, B) = e(A, B) + k(|A| - |B|) - d(A).
$$
We define $\beta_k(G) = \max \beta_k(A, B)$, with the maximum taken over disjoint vertex sets $A, B$. The following proposition shows how $\beta_k$ is related to $k$-factors. We remark that the term $n\psi^{-1/2}$ has not been optimized, and for large $\psi$ it can be replaced by a term of order $n\frac{\ln\ln \psi}{\ln\psi}$.
\begin{proposition}\label{prop:window}
Let $k\ge 1$. Suppose $G$ is a sparse graph on $n$ vertices and $m = \m n$ edges. There exists a constant $C > 0$ such that if $G$ has effective girth $\psi \ge C\m^2$ then
$$
\beta_k(G) \le 2\lam_k(G) \le \frac{n}{\psi^{1/2}} + \max\left\{0, \beta_k(G)\right\},
$$
\end{proposition}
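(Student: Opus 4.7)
The two inequalities are proved separately. The left one, $\beta_k(G) \le 2\lambda_k(G)$, is an elementary degree count; the right one, $2\lambda_k(G) \le n\psi^{-1/2} + \beta_k(G)^+$, is the substantive claim and uses the effective-girth hypothesis essentially.

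For the left inequality, fix a maximum $k$-matching $F$ and disjoint $A, B$. Since every vertex has $F$-degree at most $k$, we have $e_F(A, B) \le \sum_{v \in B} d_F(v) \le k|B|$, and summing $F$-degrees over $A$ gives
\begin{align}
d_F(A) = 2 e_F(A) + e_F(A, B) + e_F(A, V \setminus (A \cup B)) \le d(A) - e(A, B) + k|B| = k|A| - \beta_k(A, B).
\end{align}
Combined with the trivial bound $d_F(A) \le k|A|$, this yields $d_F(A) \le k|A| - \beta_k(A, B)^+$. Since $d_F(V \setminus A) \le k(n - |A|)$, summing produces $2|F| \le kn - \beta_k(A, B)^+$, which rearranges to $\beta_k(A, B) \le 2\lambda_k(G)$.

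For the right inequality I would invoke the defect formula for $(0, k)$-factors (Lov\'asz's $f$-factor theorem specialized to $f \equiv k$, or Tutte--Berge applied to the standard blowup reducing $k$-matchings to ordinary matchings). This produces disjoint $A^*, B^*$ and a family $\mathcal{X}$ of components of $G - (A^* \cup B^*)$, each failing a parity condition determined by the edges $e(B^*, \cdot)$, satisfying
\begin{align}
2\lambda_k(G) = \beta_k(A^*, B^*) + |\mathcal{X}|.
\end{align}
It then suffices to show $|\mathcal{X}| \le n\psi^{-1/2}$. I would split $\mathcal{X} = \mathcal{X}_1 \cup \mathcal{X}_2$ according to whether the component contains a vertex $v$ with $c_G(v) \le \psi^{1/2}$. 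By the effective-girth definition, $|\mathcal{X}_1| \le c_{\psi^{1/2}}(G) \le n/\psi$. Every component in $\mathcal{X}_2$ has girth exceeding $\psi^{1/2}$, so any such component of size at most $\psi^{1/2}$ is a tree, and a leaf-stripping argument shows it admits a $k$-matching of the required parity, contradicting $C \in \mathcal{X}$. Hence each $\mathcal{X}_2$-component has at least $\psi^{1/2}$ vertices, bounding $|\mathcal{X}_2| \le n/\psi^{1/2}$. Summing gives $|\mathcal{X}| \le 2n/\psi^{1/2}$, with the factor of $2$ absorbed into the constant $C$ of $\psi \ge C\mu^2$.

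\textbf{Main obstacle.} The technical heart is the structural claim that long-girth components can always absorb the parity requirement. For trees this is a direct leaf-stripping argument, but the analog for long-girth but non-tree components requires an ear-decomposition-style modification; this is where the hypothesis $\psi \ge C\mu^2$ is needed, since $\mu$ controls the average degree and hence the length of the local modifications available before closing a short cycle. The suboptimal exponent $\psi^{-1/2}$ (versus the $n\ln\ln\psi/\ln\psi$ mentioned in the remark) reflects the coarse $t = \psi^{1/2}$ threshold used to balance short-cycle vertices against small-component contributions; a Moore-type lower bound on the number of vertices in a graph of given minimum degree and girth would tighten the trade-off.
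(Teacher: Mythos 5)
Your proof of the left inequality $\beta_k(G) \le 2\lambda_k(G)$ is correct and is essentially the paper's argument (the paper restricts the degree sum to $A$ instead of summing over all of $V\setminus A$ as well, but these are the same computation).

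For the right inequality you take a genuinely different route. The paper runs an alternating-path BFS out of the set $X_F$ of deficient vertices, producing layers $X_0, X_1, \dots$ and a candidate pair $(A_r, B_r)$; the maximality of $F$ (no augmenting alternating path) is what forces the would-be augmenting structures to close into short cycles, and the error term $3c_{4r+1} + e(X_{2r}, X_{2r+1})$ is then controlled by choosing $r$ at a level where the degree mass stops growing. By contrast, you want to invoke the Lov\'asz/Tutte defect formula for degree-constrained subgraphs, which (after checking signs) does give $2\lambda_k(G) = \max_{A,B}\bigl[\beta_k(A,B) + q(A,B)\bigr]$ with $q$ a count of parity-violating components of $G - A - B$. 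If you could then show that some optimizer has $q \le n\psi^{-1/2}$, you would be done.

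The gap is in exactly the step you flag as the technical heart, and I don't think it is repairable as stated. A tree component $C$ of $G - A^* - B^*$ \emph{can} be a parity-violating component: the condition is that $e(A^*, C) + k|C|$ is odd, and there is nothing stopping a small, high-girth tree from satisfying this (e.g.\ a $P_3$ with no edges into $A^*$, $k=1$). Leaf-stripping does not rule such a component out; what it would have to show is that one can \emph{modify} $(A^*,B^*)$ — say by pushing leaves of $C$ into $A^*$ or $B^*$ — without decreasing $\beta_k + q$, eventually destroying the odd component. Tracking the net change in $\beta_k$ (which involves $e(\ell, A^*)$, $e(\ell, B^*)$, and $k$) and the cascading parity flips in the surviving pieces of $C$ and in neighbouring components is a delicate exchange argument that you have not supplied, and it is not obvious it closes. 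A second warning sign: your counting of $\mathcal{X}_1$ and $\mathcal{X}_2$ uses only the girth and never uses $\mu$, whereas the hypothesis $\psi \ge C\mu^2$ should enter substantively — in the paper it is precisely what bounds the BFS depth $r$ and the leakage term $e(X_{2r}, X_{2r+1})$. Its absence from your argument suggests the structural claim you are leaning on is doing more work than a leaf-stripping argument can deliver.

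Minor points: the factor of $2$ in $|\mathcal{X}| \le 2n/\psi^{1/2}$ cannot actually be absorbed into $C$, since $n/\psi^{1/2}$ on the right-hand side refers to the actual effective girth of $G$ and not to $C\mu^2$; you would need to rebalance your threshold $t$ to get the stated constant. Also, the parity condition is governed by $e(A^*, \cdot)$ (the first argument of $\beta_k$, the set whose degree sum is subtracted), not $e(B^*, \cdot)$ — worth getting right if you pursue this route.
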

\begin{proof}
The upper bound is restated and proved in Section~\ref{sec:bipartite}. We show the lower bound here. Suppose $F$ is a $k$-matching with $\lam_k(F) = \lam_k(G)$, and suppose $A, B$ are sets with $\beta_k(A, B) = \beta_k(G) > 0$. Also let $C = V\setminus (A\cup B)$. Then
\begin{multline}
d_F(A) = e_F(A, B) + e_F(A, C) + 2e_F(A) \\
\le k|B| + e(A, C) + 2e(A) \\
   \le k|B| + d(A) - e(A, B)  = k|A| - \beta_k(A, B).
\end{multline}
This implies
\al{
  2\lam_k(G) = \sum_{v\in V} (k - d_F(v)) & \ge \sum_{v\in A} (k-d_F(v)) \\
  & = k|A| - d_F(A) \ge  \beta_k(A, B).
}
\end{proof}
For $k\ge 2$ define the graph property
$$
\BB^{(k)} = \{G : \beta_k(G) \le 0\},
$$
and for $\t, \th > 0$ define
$$
\BB^{(k)}_{\t, \th} = \left\{G \in \BB^{(k)} : \max_{\substack{A\cap B = \emptyset \\ |A| \ge \t n}} \beta_k(A, B) \le -\th n\right\}.
$$
While $\SS_\g, \CC$ are decreasing properties, for $\BB^{(k)}_{\t,\th}$ we  will make use of the following lemma.
\begin{lemma}\label{lem:StoB}
Let $k\ge 2$ and let $\th, \g, \vf > 0$ with $2\vf < \th$. Suppose $G\in \BB^{(k)}_{\g/2, \th}\cap \SS_\g$ has minimum degree $\d\ge  k+2$, and suppose $H\subseteq G$ has $e(H) \ge e(G) - \vf n$. Then $H\in \BB^{(k)}$.
\end{lemma}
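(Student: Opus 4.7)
The plan is to argue by contradiction. Assume $\beta_k^H(A, B) > 0$ for some disjoint $A, B$, where $\beta_k^H$ and $\beta_k^G$ denote the quantity $\beta_k(A,B)$ computed in $H$ and $G$ respectively. Set $C = V \setminus (A \cup B)$ and $R = E(G) \setminus E(H)$, so that $|R| \le \vf n$. Expanding $e_H(A, B) = e_G(A, B) - e_R(A, B)$ and $d_H(A) = d_G(A) - 2 e_R(A) - e_R(A, B) - e_R(A, C)$ in the definition of $\beta_k^H(A, B)$ gives the identity
\begin{equation}
\beta_k^H(A, B) = \beta_k^G(A, B) + 2 e_R(A) + e_R(A, C),
\end{equation}
in which the error term is at most $d_R(A) \le 2|R| \le 2\vf n$.

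I would then split on $|A|$. When $|A| \ge \g n/2$, strong balance of $G$ gives $\beta_k^G(A, B) \le -\th n$, so $\beta_k^H(A, B) \le -\th n + 2\vf n < 0$ by the hypothesis $2\vf < \th$, a contradiction. When $|A| < \g n/2$ and $|B| \ge |A|$, the alternative expansion $\beta_k^H(A, B) = -2 e_H(A) - e_H(A, C) + k(|A| - |B|)$ has both terms nonpositive, again contradicting $\beta_k^H(A, B) > 0$.

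The remaining case is $|A| < \g n/2$ and $|B| < |A|$, so $|A \cup B| < 2|A| < \g n$, and the $(13/12, \g)$-sparsity of $G$ yields $e_H(A, B) \le e_G(A \cup B) \le \frac{13}{12}(|A| + |B|)$. Using that $H$ inherits the minimum degree of $G$ (which holds in the paper's intended application, where $H = \G$ is constructed to preserve $\d(G)$), $d_H(A) \ge (k+2)|A|$, and therefore
$$
\beta_k^H(A, B) \le \frac{13(|A| + |B|)}{12} - 2|A| - k|B| \le -\frac{11(|A| + |B|)}{12} \le 0
$$
for $k \ge 2$, a contradiction.

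The main obstacle is this last case: the minimum-degree bound is hypothesized only on $G$, but a single vertex $v$ with $d_G(v) = k+2$ and $d_R(v) \ge 3$ would give $\beta_k^H(\{v\}, \emptyset) \ge 1$, so the argument really depends on $H$ preserving the minimum degree $\d(G) \ge k+2$ rather than just on the edge budget $\vf n$.
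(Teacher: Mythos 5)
Your proof follows essentially the same route as the paper's: a three-way case split, with strong balance handling $|A|\ge\g n/2$, the trivial observation $\beta_k^H(A,B)\le0$ when $|A|\le|B|$, and $(13/12,\g)$-sparsity combined with the minimum-degree bound $d_H(A)\ge(k+2)|A|$ handling $|A\cup B|<\g n$; only the order in which you partition the cases differs cosmetically from the paper's $|A\cup B|\lessgtr\g n$ split.

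The gap you flag at the end is genuine, and the paper's own proof has it too: in the small-$|A\cup B|$ case the paper writes
\begin{equation}
\beta_k^H(A,B)\ \le\ e_H(A\cup B)+k(|A|-|B|)-\d|A|,
\end{equation}
which uses $d_H(A)\ge\d|A|$, i.e. $\d(H)\ge\d$, not merely $H\subseteq G$ with $e(H)\ge e(G)-\vf n$. Taken literally, the lemma's hypotheses do not forbid removing several edges at a single vertex, and your example $A=\{v\}$, $B=\emptyset$ with $d_G(v)=k+2$ and $d_H(v)\le k-1$ gives $\beta_k^H(\{v\},\emptyset)\ge1>0$, so the stated conclusion would fail. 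The lemma should carry the additional hypothesis $\d(H)\ge\d$. This costs nothing in the application: $\F$ by construction only deletes edges $uv$ with $d(u),d(v)>\d$, so $\G_{n,\bp}^*$ retains minimum degree $\d$, which is exactly the point you make.
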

\begin{proof}
Let $A, B$ be disjoint vertex sets. First suppose $|A\cup B| \le \g n$. Then, since $\SS_\g$ is a decreasing property we have
\al{
  \beta_k^H(A, B) & = e_H(A, B) + k(|A| - |B|) - d_H(A) \\
  & \le e_H(A\cup B) + k(|A| - |B|) - \d|A| \\
  & \le \frac{13}{12}|A\cup B| - 2|A| - 2|B| \le 0.
}
Now suppose $|A\cup B| > \g n$. If $|A| \le |B|$ then $\beta_k(A,B)\le 0$ clearly holds, so suppose $|A| \ge \g n / 2$. Then
\begin{multline}
  \beta_k^H(A, B) = e_H(A, B) + k(|A| - |B|) - d_H(A) \\
  \le e_G(A, B) + k(|A| - |B|) - d_G(A) + 2\vf n 
  \le -\th n + 2\vf n.
\end{multline}
\end{proof}

\section{$k$-matchings and targets}\label{sec:altwalk}

For a $k$-matching $F$ define
$$
X_F = \{v\in V : d_F(v) < k\}.
$$
As in Section~\ref{sec:properties}, for $k$-matchings $F$ define
$$
\lam_k(F) = \fl{\frac{kn}{2}} - |F|, \quad \lam_k(G) = \min_{F\subseteq G} \lam_k(F).
$$
Let $\FF_k(G)$ denote the family of $k$-matchings $F\subseteq G$ with $\lam_k(F) = \lam_k(G)$. Let
$$
X = \bigcup_{F\in \FF_k(G)} X_F,
$$
and for $x\in X$ let
$$
Y_x = \bigcup_{\substack{F\in \FF_k(G) \\ x\in X_F}} (X_F\setminus \{x\}).
$$
Let $Y_x=\emptyset$ for $x\notin X$. Note that if $x\in X$ and $y\in Y_x$ then $\lam_k(G + xy) < \lam_k(G)$.

A $2$-matching $F$ of size $n$ is a $2$-factor, or in other words a partition of $V$ into disjoint cycles. For an edge set $F$ let $\k(F)$ denote the number of connected components of $F$, and note that a Hamilton cycle is a 2-factor with $\k = 1$. Let $\k_2(G)$ be the minimum value of $\k(F)$ over 2-factors $F$ in $G$. If $\lam_2(G) = 0$ and $\k_2(G) > 1$, for each $w\in V$ let
$$
Z_w = \{z\in V : \k_2(G + wz) < \k_2(G)\}.
$$
Let $W$ be the set of vertices $w$ such that $Z_w$ is nonempty.

\begin{lemma}\label{lem:posaish}
Suppose $G$ is a graph on $n$ vertices.
\begin{enumerate}[(i)]
\item Let $k\ge 2$ and suppose $\d(G) \ge k+1$, $\lam_k(G)  > 0$ and let $x\in X$. Then there exists a nonempty set $S_x$ such that
$$
e(S_x) \ge \frac{13}{12}|S_x|,
$$
and $|S_x\cap  Y_x| \ge \frac{1}{k+1}|S_x| - 1$.
\item Suppose $\d(G)\ge 4$, $\lam_2(G) = 0$, $\k_2(G) > 1$. Then for every vertex $w\in W$, there exists a nonempty $T_w$ such that
$$
e(T_w)  \ge \frac{7}{6}|T_w|,
$$
and $|T_w\cap Z_w| \ge |T_w|/3$.
\end{enumerate}
\end{lemma}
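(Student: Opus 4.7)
The plan is a rotation-based construction in the spirit of P\'osa's lemma, tailored to $k$-matchings in (i) and to 2-factors with multiple components in (ii). In both parts I would grow a set around the distinguished vertex ($x$ or $w$) by iteratively performing edge swaps that produce new optimal structures and expose fresh vertices in $Y_x$ or $Z_w$; the final set is then shown to be both dense in $G$ (from accumulated $F$-edges plus the swap edges) and rich in $Y_x$-vertices (or $Z_w$-vertices) coming from the swaps themselves.

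For part (i), fix $x\in X$ and an optimal $F\in \FF_k(G)$ with $x\in X_F$. Initialize $S\leftarrow\{x\}$. At each step look for $u\in S$ and $v\notin S$ with $uv\in G\setminus F$. If $d_F(v)<k$, the alternating $x$-to-$v$ walk inside $S$ augments $F$, contradicting optimality of $F$; so $d_F(v)=k$. Pick an $F$-neighbor $v'$ of $v$, update $F\leftarrow F-vv'+uv$ (preserving $|F|$, hence optimality) and $S\leftarrow S\cup\{v,v'\}$. The new $F$ has $v'\in X_F$, so $v'\in Y_x$ provided the invariant $x\in X_F$ is maintained. On termination, every non-$F$ edge from $S_x:=S$ stays inside $S_x$, so the current $F$ contributes at least $k|S_x|/2-O(1)$ edges inside $S_x$, and the $\ge (|S_x|-1)/2$ swap edges (also inside $S_x$) add more. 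This gives $e(S_x)\ge \frac{k+1}{2}|S_x|-O(1)\ge \frac{3}{2}|S_x|-O(1)\ge \frac{13}{12}|S_x|$ for $|S_x|$ moderately large (the tiny cases are handled using $\d\ge k+1$ directly). Moreover each swap contributes one vertex $v'$ to $Y_x\cap S_x$, yielding $|Y_x\cap S_x|\ge (|S_x|-1)/2\ge \frac{1}{k+1}|S_x|-1$.

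Part (ii) follows the same blueprint but with an optimal 2-factor $F$ satisfying $\k(F)=\k_2(G)\ge 2$ in place of a $k$-matching, growing $T_w$ by cycle-merging swaps starting from $w$. Given $w\in W$ and a chord $wy\in G\setminus F$ on the same $F$-cycle as $w$, a P\'osa-style rotation of that cycle produces a new 2-factor exposing a fresh candidate endpoint for merging with an adjacent cycle; the endpoints so reached form $T_w\cap Z_w$. The density target $\frac{7}{6}|T_w|$ splits as the $|T_w|$ edges from $F$-cycles contained in $T_w$ plus at least $|T_w|/3$ swap edges, supplied by the guarantee $|T_w\cap Z_w|\ge |T_w|/3$; the stronger constant (relative to part (i)) reflects the stronger hypothesis $\d\ge 4$ and the fact that $F$-cycles automatically saturate every vertex to degree $2$.

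The main obstacle in part (i) is a \emph{saturation trap} at $x$ when $d_F(x)=k-1$: a swap with $u=x$ raises $x$ to full $F$-degree and breaks the invariant $x\in X_F$. I would handle this either by first opening an $F$-edge at $x$ or by processing such rotations in pairs, absorbing the bookkeeping cost into the $O(1)$ slack of the density count. Part (ii) carries an analogous subtlety: a single chord $wy$ does not merge cycles in a 2-factor, so a valid swap must package two edges (the chord together with another compatible edge produced by the rotation), and one must verify the rotated structure is still a valid 2-factor at each step before the reconnection in $Z_w$ is invoked.
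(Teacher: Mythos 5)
Your proposal is a greedy P\'osa-rotation scheme; the paper's proof is in the same spirit (it too builds $S_x$ from rotations/alternating walks) but is done non-greedily: it fixes a second exposed vertex $y\in X_F\setminus\{x\}$ and defines $S_x$ at once as the closure $A_F^{\uparrow\uparrow}(y)\cup A_F^{\uparrow\downarrow}(y)$ of vertices reachable from $y$ by $F$-alternating walks of the two parities, then computes degrees carefully within that fixed set.

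There is a genuine gap in your density count. You claim that after the greedy process stops, ``the current $F$ contributes at least $k|S_x|/2-O(1)$ edges inside $S_x$'', giving $e(S_x)\ge\frac{k+1}{2}|S_x|-O(1)$. Termination only guarantees that every \emph{non}-$F$ edge leaving a vertex $v\in S_x$ stays in $S_x$; the $F$-edges at the newly absorbed vertices $v'$ can and generally do leave $S_x$, so the $F$-edges inside $S_x$ can number as few as $O(\text{number of swaps})$, not $k|S_x|/2$. What you actually get from the closure is only $d_{S_x}(v)\ge d(v)-d_F(v)\ge 1$ for a generic $v\in S_x$, i.e. $e(S_x)\ge|S_x|/2$, which is far below $\frac{13}{12}|S_x|$. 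The paper recovers the supercritical density by partitioning $S_x$ into the $\uparrow\downarrow$-reachable vertices (degree $\ge 2$ inside $S_x$, because their non-$F$ edges extend the walk) and the $\uparrow\uparrow$-reachable vertices, which after the walk have an $F$-degree of $k+1$ in the modified matching and hence \emph{all} $k+1$ incident edges inside $S_x$; a neighbour-counting inequality ($|A^{\uparrow\downarrow}\setminus A^{\uparrow\uparrow}|\le k|A^{\uparrow\uparrow}|+1$ and its converse) then yields both $e(S_x)\ge\frac{13}{12}|S_x|$ and $|S_x\cap Y_x|\ge\frac1{k+1}|S_x|-1$. Without separating these two vertex classes the constant $\frac{13}{12}$ is out of reach.

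Your ``saturation trap'' at $x$ is also a real obstruction, and the fixes you sketch (opening an edge at $x$, pairing rotations) are not made to work. The paper's resolution is cleaner and worth noting: it first arranges $|X_F|\ge 2$, picks $y\in X_F\setminus\{x\}$, and performs all walks \emph{from $y$}, never touching $x$; any vertex $z$ with an $\uparrow\downarrow$ walk from $y$ has $z,x$ both unsaturated in the resulting optimal matching, so $z\in Y_x$, and the invariant at $x$ is automatic. The same two issues recur in your part (ii): the claim ``$|T_w|$ edges from $F$-cycles contained in $T_w$'' presumes the $F$-cycles lie inside $T_w$, which the greedy construction does not provide; the paper again uses the $\downarrow\downarrow$ versus $\downarrow\uparrow$ split to get $d_{T_w}(v)\ge 3$ for one class and $\ge 2$ for the other, then combines with the neighbour bound to reach $\frac76|T_w|$.
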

We postpone the proof of Lemma~\ref{lem:posaish} to Section~\ref{sec:posaish}.

We introduce the concept of {\em targets}. A target for a graph $G$ is an edge $e\notin G$ such that $G+e$ is closer to containing a structure of interest. In the context of $k$-factors we define $\TT_k(G)$ as the set of $e\notin G$ for which $\lam_k(G + e) < \lam_k(G)$, noting that $G$ contains a $k$-factor if and only if $\TT_k(G) = \emptyset$. In the context of Hamilton cycles, the definition of the target set $\TT_c(G)$ has three possible definitions:
$$
\begin{array}{ll}
\{xy\notin G : \lam_2(G + e) < \lam_2(G)\}, & \lam_2(G) > 0, \\
\{xy\notin G : \k(G + e) < \k(G)\}, & \lam_2(G) = 0, \k(G) > 1, \\
\{xy\notin G : \k_2(G + e) < \k_2(G)\}, & \lam_2(G) = 0, \k(G) = 1, \k_2(G) > 1.
\end{array}
$$
When $\lam_2(G) = 0, \k(G) = 1$ and $\k_2(G) = 1$ then $G$ is Hamiltonian, and we let $\TT_c(G) = \emptyset$. 
\begin{corollary}\label{cor:target} Let $0 < \g < 1/2$. Suppose $G$ is a $(13/12,\g)$-sparse graph on $n$ vertices with effective girth at least $\ln^{1/2}n$. Let $H$ be a graph containing $G$. For $n$ large enough, the following holds.
\begin{enumerate}[(i)]
\item If $k\ge 2$ and $\d(G) \ge k+2$ then at least one of the following holds: (a) $H$ contains a $k$-factor, (b) $|\TT_k(H)| \ge \frac{\g^2}{2(k+2)^2}n^2$, (c) $H$ is not $(13/12,\g)$-sparse.
\item If $\d(G) \ge 4$ then at least one of the following holds: (a) $H$ is Hamiltonian, (b) $|\TT_c(H)| \ge \g^2n^2/32$, (c) $H$ is not $(13/12, \g)$-sparse.
\end{enumerate}
\end{corollary}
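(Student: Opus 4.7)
The plan is to show both parts by assuming options (a) and (c) fail and then lower-bounding the target count using Lemma~\ref{lem:posaish} together with the $(13/12,\g)$-sparsity of $H$. Since $G \subseteq H$ preserves the minimum degree condition, Lemma~\ref{lem:posaish} applies directly to $H$. For part (i) with $\lam_k(H) > 0$, the lemma produces for each $x \in X$ (defined with respect to $H$) a nonempty $S_x$ with $e(S_x) \ge \frac{13}{12}|S_x|$ and $|S_x \cap Y_x| \ge \frac{1}{k+1}|S_x| - 1$. The $(13/12,\g)$-sparsity of $H$ forces $|S_x| > \g n$, so for $n$ large enough $|S_x \cap Y_x| \ge \g n / (k+2)$. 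Since $Y_x \subseteq X$ by definition, even a single $x$ gives $|X| \ge \g n / (k+2)$, and each pair $\{x, y\}$ with $y \in Y_x$ lies in $\TT_k(H)$. Summing $|S_x \cap Y_x|$ over $x \in X$ counts each unordered target at most twice, yielding $|\TT_k(H)| \ge \frac{\g^2 n^2}{2(k+2)^2}$.

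For part (ii) I split according to the three-way definition of $\TT_c$. If $\lam_2(H) > 0$, then $\TT_c(H) = \TT_2(H)$ and the argument of part (i) with $k = 2$ gives $|\TT_c(H)| \ge \g^2 n^2/32$. If $\lam_2(H) = 0$, $\k(H) = 1$ and $\k_2(H) > 1$, I apply Lemma~\ref{lem:posaish}(ii): for each $w \in W$, $e(T_w) \ge \frac{7}{6}|T_w| > \frac{13}{12}|T_w|$, so sparsity forces $|T_w| > \g n$ and $|T_w \cap Z_w| \ge \g n / 3$. Edge-symmetry of the relation $z \in Z_w \Leftrightarrow w \in Z_z$ gives $Z_w \subseteq W$, hence $|W| \ge \g n/3$, and the same double-count yields $|\TT_c(H)| \ge \g^2 n^2/18$. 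Both bounds comfortably exceed $\g^2 n^2/32$.

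The main wrinkle is the remaining case $\lam_2(H) = 0$, $\k(H) > 1$, which is not covered by Lemma~\ref{lem:posaish}. I would handle it directly: for any connected component $C$ of $H$, every $H$-neighbour of a vertex of $C$ lies in $C$, so $\d(H) \ge \d(G) \ge 4$ gives $e_H(C) \ge 2|C|$; sparsity then forces $|C| > \g n$. Hence $\k(H) \ge 2$ implies at least two components of size $> \g n$, and every cross-component pair is a non-edge and a target, producing $|\TT_c(H)| \ge (\g n)^2$. The only minor technical point is the borderline possibility that Lemma~\ref{lem:posaish}(i) produces an $S_x$ with exact equality $e(S_x) = \frac{13}{12}|S_x|$ and $|S_x| \le \g n$, consistent with sparsity; this can be absorbed by a slight perturbation of $\g$ or by replacing the sparsity constant with any value slightly larger than $13/12$.
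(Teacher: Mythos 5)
Your proposal follows the same architecture as the paper's proof (case split on the three branches of $\TT_c$, apply Lemma~\ref{lem:posaish} plus sparsity to get a linear-size set, then double-count) and most of it is correct, but there is one genuine gap in the case $\lam_2(H)=0$, $\k(H)=1$, $\k_2(H)>1$. You write ``for each $w \in W$, [\ldots] $|Z_w| \ge \g n/3$; since $Z_w \subseteq W$, hence $|W| \ge \g n/3$.'' This chain of implications is vacuous if $W = \emptyset$: the quantified statement is trivially true, $Z_w \subseteq W$ gives nothing, and no lower bound on $|W|$ follows. Note that Lemma~\ref{lem:posaish}(ii) does \emph{not} assert $W \ne \emptyset$ --- its hypotheses include $\k_2(G) > 1$, and its proof explicitly begins ``if $W = \emptyset$ there is nothing to prove.'' So nonemptiness of $W$ is an independent fact that must be established. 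The paper supplies exactly this: take any 2-factor $F$ with $\k(F) = \k_2(H) > 1$; since $H$ is connected ($\k(H)=1$) there is an edge $uv \notin F$ with $u,v$ in different $F$-components; pick $u', v'$ with $uu', vv' \in F$; then $F' = F \triangle (u',u,v,v',u')$ is a 2-factor of $H + u'v'$ with one fewer component, so $\k_2(H+u'v') < \k_2(H)$ and $u' \in W$. Without this or some equivalent step, your argument does not go through.

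Everything else lines up with the paper. Your treatment of the disconnected case ($\lam_2(H)=0$, $\k(H)>1$) --- using $\d(G)\ge 4$ and sparsity to show every component exceeds $\g n$ and then counting cross-component non-edges --- is the same as the paper's. The $Z_w \subseteq W$ observation via symmetry of the relation is correct. The borderline sparsity issue you flag (equality $e(S_x) = \frac{13}{12}|S_x|$ being formally consistent with $|S_x| \le \g n$) is present in the paper's own proof as well and is harmless here, since the proof of Lemma~\ref{lem:posaish}(i) actually establishes the strict bound $e(S_x) \ge \frac{7}{6}|S_x| - \frac16$, which beats $\frac{13}{12}|S_x|$ whenever $|S_x|\ge 3$, and small $|S_x|$ is ruled out by $e(S)\le \binom{|S|}{2}$. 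No perturbation of $\g$ is needed.
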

\begin{proof}[Proof of Corollary \ref{cor:target}]
We prove (i), so let $k\ge 2$ and assume $\d(G) \ge k+1$ and $G\subseteq H$. Suppose $\lam_k(H) = 0$ and that $H$ is $(13/12, \g)$-sparse. We prove that $|\TT_k(H)| = \OM(n^2)$. 

Let $x\in X(H)$. Let $S_x$ be as in Lemma~\ref{lem:posaish} (i). As $H$ is $(13/12,\g)$-sparse, we have $|S_x| \ge \g n$, and $|Y_x| \ge |S_x\cap Y_x| \ge \frac{1}{k+1}|S_x|-1 \ge \frac{\g}{k+2} n$ for $n$ large enough. Since $Y_x\subseteq X$, we have $|X| \ge  \frac{\g}{k+2} n$, and
$$
|\TT_k(H)| = \frac12 \sum_{x, y} [x\in X, y\in Y_x] \ge \frac12 \sum_{x\in X} |Y_x| \ge \frac{\g^2}{2(k+2)^2}n^2.
$$
We prove (ii). The case $\lam_2(H) > 0$ is exactly as (i). Suppose $\lam_2(H) = 0$ and $\k(H) > 1$, and that $H$ is $(13/12, \g)$-sparse. Suppose $C$ is a connected component of $H$ with $|C| \le \g n$. Then
$$
0 = e(C, \ol{C}) \ge d(C) - 2e(C) \ge 4|C| - \frac{26}{12} |C| > 0,
$$
a contradiction. So all connected components have size at least $\g n$. Since $C\times \ol C\subseteq \TT_c(H)$, we have $|\TT_c(H)| \ge |C|(n-|C|) \ge \g(1-\g)n^2 \ge \frac{\g^2}{9}n^2$.

Still assuming $\lam_2(H) = 0$ and $H\in \SS_\g$, suppose $H$ is connected and let $F$ be a 2-factor with $\k(F) = \k_2(H) > 1$. Since $H$ is connected, there exists an edge $uv\notin F$ with $u, v$ in different components of $F$. Let $u', v'$ be such that $uu', vv'\in F$. Then $\k_2(H + u'v') < \k_2(H)$, since $F' = F\triangle (u', u, v, v', u')$ is a 2-factor with $\k(F') = \k(F) - 1$. Taking $w = u'$, we conclude that $w\in W$ so $W\ne\emptyset$. We can now apply the same line of reasoning as in the previous paragraph, applying Lemma~\ref{lem:posaish}~(ii) to conclude that $|Z_w| \ge \g n / 3$. But $Z_w\subseteq W$, so
$$
|\TT_c(H)| = \frac12 \sum_{w, z} [w\in W, z\in Z_w] \ge  \frac12 \sum_{w\in W}|Z_w| \ge \frac{\g^2}{9}n^2. 
$$
\end{proof}

\section{Sprinkling}\label{sec:sprinkling}

\begin{definition}\label{def:random}
Let $E$ be a finite set and $\t,\e\in [0,1]$. A probability measure $\m$ on $E$ is {\em $(\t, \e)$-random} if for any set $S\subseteq E$ with $|S| \ge \e |E|$,
$$
\m(S) \ge \t.
$$
\end{definition}
Let $G = (V, E)$ be a graph and $s\ge 0$ an integer. Let
$$
G^{(s)} = \{G\cup H : |H|\le s, H\cap E = \emptyset\}
$$
be the set of graphs obtained by adding at most $s$ edges to $G$. Suppose $\m = \{\m_\G : \G\in G^{(s-1)}\}$ is a family of measures, where each $\m_\G$ is a probability measure on $\G^{(1)}$, or equivalently a probability measure on the set of edges not in $\G$. Let $\SS^{(0)} = \{G\}$, and recursively let $\SS^{(i)}$ be the set of $\G_1\in G^{(i)}$ such that $\m_\G(\G_1) > 0$ for some $\G\in \SS^{(i-1)}$, for $i=1,\dots,s$. Say that $\m$ has {\em support} $\SS = \SS^{(0)}\cup\dots\cup \SS^{(s)}$. We call $\m = \{\m_\G : \G\in \SS\}$ an {\em $s$-round sprinkling scheme for $G$}.

Given an $s$-round sprinkling scheme for $G$, we can define a sequence $\m^t[G], t = 0,\dots,s$, by letting $\m^0[G] = G$ and successively adding an edge to $H = \m^t[G]$ drawn according to $\m_H$ to form $\m^{t+1}[G]$.
\begin{definition}\label{def:sprinkler}
Let $\t, \e > 0$ and let $s\ge 0$ be an integer. A {\em $(\t, \e, s)$-sprinkler for $G$} is an $s$-round sprinkling scheme for $G$ with support $\SS$ such that for every $\G\in \SS$, $\m_\G$ is $(\t, \e)$-random.
\end{definition}
Recall from \eqref{eq:SSdef} that we define $\SS_\g$ as the class of $(13/12, \g)$-sparse graphs, and $\CC$ the class with effective girth at least $\ln^{1/2}n$. Let $\FF_k$ denote the class of graphs containing a $k$-factor, and $\HH$ the class of graphs containing a Hamilton cycle.

\begin{proposition}\label{prop:sprinkling}
Let $k \ge 2$. Let $\t,\e,\g,\s > 0$ with $\g < 1/2$ and $\t < \frac{\g^2}{2(k+2)^2}$. Let $s = \s n$. Suppose $G \in \SS_\g\cap \CC$ is balanced. Suppose $\m$ is a $(\t,\e,s)$-sprinkler for $G$. 
\begin{enumerate}[(i)]
\item Suppose $k\ge 2$ and $\d(G) \ge k+2$.  Then
$$
\Prob{\m^s[G] \in \FF_k \cup \ol{\SS_\g}} = 1-o(1).
$$
\item Suppose $\d(G) \ge 4$. Then
$$
\Prob{\m^s[G] \in \HH\cup \ol{\SS_\g}\cup\ol{\CC}} = 1-o(1).
$$
\end{enumerate}
\end{proposition}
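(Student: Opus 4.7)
The plan is to iterate Corollary~\ref{cor:target} along the sprinkling trajectory $\G_t := \m^t[G]$ and close by a Chernoff-type coupling. Two monotonicity observations drive everything. First, $\SS_\g$ and $\CC$ are closed under taking subgraphs, so if $\G_s\in \SS_\g\cap \CC$ then $\G_t\in \SS_\g\cap \CC$ for every $t\le s$; combined with $G\subseteq \G_t$, this lets me invoke Corollary~\ref{cor:target} with fixed subgraph $G$ and supergraph $H=\G_t$ at every step. Second, $\lam_k$, $\k$, and $\k_2$ are non-increasing under edge addition (where defined), so their values at $G$ control the trajectory throughout.

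I first bound the total progress budget. Proposition~\ref{prop:window} applied to the balanced graph $G$ (using $\b_k(G)\le 0$) gives $\lam_k(G) \le n/(2\psi(G)^{1/2}) = o(n)$. For part~(ii), the $(13/12,\g)$-sparsity argument from the proof of Corollary~\ref{cor:target}(ii) forces every connected component of $G$ to have size at least $\g n$, hence $\k(G)\le 1/\g = O(1)$; and once $\lam_2$ has been driven to zero at some time $t_0$, the inequality \eqref{eq:girthbound} applied to any 2-factor of $\G_{t_0}$ (still in $\CC$) gives $\k_2(\G_{t_0})\le 2n/\psi(\G_{t_0}) \le 2n/\ln^{1/2}n = o(n)$. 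Hence reaching a Hamilton cycle requires at most $\lam_2(G) + (\k(G)-1) + (\k_2(\G_{t_0})-1) = o(n)$ target hits summed across the three phases of $\TT_c$; for part~(i) the budget is simply $\lam_k(G) = o(n)$.

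Next I extract a uniform per-round progress probability. Define the stopping time $T := \min\{t\le s : \G_t\in \FF_k\cup\ol{\SS_\g}\}$ for part~(i), with $\ol{\CC}$ adjoined for part~(ii). On the event $\{T>t\}$, $\G_t$ lies in $\SS_\g\cap\CC$ and is not yet a $k$-factor (resp.\ Hamiltonian), so Corollary~\ref{cor:target} yields $|\TT_k(\G_t)|\ge \tfrac{\g^2}{2(k+2)^2}n^2$ (resp.\ $|\TT_c(\G_t)|\ge \g^2 n^2/32$). The ground set of $\m_{\G_t}$ has size at most $\binom{n}{2}$, so provided $\e$ sits below the implicit density threshold $\g^2/(k+2)^2$ (compatible with the hypothesis $\t < \g^2/(2(k+2)^2)$), the target set fills at least an $\e$-fraction of the ground set, and $(\t,\e)$-randomness delivers $\Prob{\G_{t+1}\setminus \G_t\subseteq \TT(\G_t)\mid \G_t}\ge \t$.

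The finish is a coupling from below with i.i.d.\ Bernoulli$(\t)$ variables $B_1,\dots,B_s$: the number of target hits by time $\min(T,s)$ stochastically dominates $\sum_{t=1}^{\min(T,s)}B_t$. On $\{T>s\}$ this sum must be strictly below the $o(n)$ budget, whereas $\E{\mathrm{Bin}(s,\t)}=\s\t n = \Theta(n)$, so Chernoff gives $\Prob{T>s}=o(1)$; on $\{T\le s\}$ monotonicity of $\FF_k$ and $\ol{\SS_\g}$ forces $\G_s\in \FF_k\cup\ol{\SS_\g}$ (resp.\ $\HH\cup\ol{\SS_\g}\cup\ol{\CC}$). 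The main obstacle is the three-phase nature of $\TT_c$ in part~(ii): one must verify the Corollary~\ref{cor:target} bound and update the budget at each phase transition, but since all three phases yield $\OM(n^2)$ target sets against an $o(n)$ budget, a single Chernoff estimate at the end suffices.
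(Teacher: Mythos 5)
Your proposal follows the paper's proof essentially verbatim: define a stopping time at the first entry into the target class, bound the total number of ``useful'' edges (the budget) by $o(n)$ using balance via Proposition~\ref{prop:window}, sparsity to bound $\k(G)$, and effective girth via \eqref{eq:girthbound} to bound $\k_2$; then use Corollary~\ref{cor:target} to get $\OM(n^2)$ targets at each step, translate this via $(\t,\e)$-randomness into a uniform per-round success probability, and finish with a $\mathrm{Bin}(s,\t)$ comparison. All of this matches the paper's argument, including the three-phase budget $\lam_2 + \k + \k_2$ for part (ii).

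One small imprecision worth noting: you write that the required bound $\e \le \g^2/(k+2)^2$ is ``compatible with the hypothesis $\t < \g^2/(2(k+2)^2)$,'' but the hypothesis on $\t$ does not imply anything about $\e$; the two are logically independent. You correctly identified that the target-set density must exceed $\e$ for $(\t,\e)$-randomness to bite, and indeed the proposition's stated hypothesis is on $\t$ rather than $\e$ --- the constraint actually needed is supplied when the proposition is applied in Section~\ref{sec:Gnd}, where $\e < \g^2/(2(k+2)^2)$ is chosen explicitly. So your observation is sharper than the statement itself; just don't pretend the $\t$ hypothesis delivers the $\e$ bound.
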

\begin{proof}
We prove (ii), and then show how (i) follows by the same argument. Fix some $G$ and consider the random sequence $\m^t[G]$, $t = 0,\dots,s$. Define a (possibly infinite) random variable
$$
\s = \inf\{0 \le t \le s : \m^t[G]\in \HH\cup \ol{\SS_\g}\cup \ol{\CC}\}.
$$
Let $e_1,\dots,e_s$ be the edges added to $G$, i.e. $\m^t[G] = \m^{t-1}[G]\cup \{e_t\}$ for all $t$. Recall the definition of $\TT_c(H)$ from Section~\ref{sec:altwalk}. Let $I_t$ denote the indicator variable for the event $e_t\in \TT_c(\m^{t-1}[G])$, and let $I = I_1 + \dots + I_\s$.

We argue that $I < n/\psi^{1/2} + 1/\g + 2n/\psi$ in any outcome. Let $k = 2$. First, Proposition~\ref{prop:window} and $\beta_k(G) \le 0$ imply that
$$
\lam_k(G) \le \frac{n}{\psi^{1/2}} + \max\{0, \beta_k(G)\} = \frac{n}{\psi^{1/2}}.
$$
If $\lam_k(\m^{t-1}[G]) > 0$, $I_t=1$ implies $\lam_k(\m^t[G]) < \lam_k(\m^{t-1}[G])$, so $I \ge n/\psi^{1/2}$ implies that $\m^s[G]\in \FF_k$.

Suppose for some $t < \s$ that $\m^t[G] \in \FF_2\cap  \SS_\g$. If $H$ is not connected, then it has at most $\k(H) \le 1/\g$ connected components (as was argued in the proof of Corollary~\ref{cor:target}). So if $I_{t+1}+\dots+I_\s > 1/\g$ then $\m^s[G]$ is connected and in $\FF_2$.

Now suppose for some $t < \s$ that $H = \m^t[G]\in \FF_2\cap \SS_\g \cap \CC$, and that $H$ is connected. As noted in Section~\ref{sec:girth}, specifically equation  \eqref{eq:girthbound}, we have $\k_2(H) \le 2n / \psi(H) \le 2n/\ln^{1/2}n$. We have
$$
1\le \k_2(\m^\s[G]) \le \k_2(\m^t[G]) - (I_{t+1} + \dots + I_\s) 
$$
so $I_{t+1} + \dots + I_\s < \k_2(\m^s[G]) \le 2n/\ln^{1/2}n$.

This shows that $I_1 + \dots + I_\s < n/\psi + 1/\g + 2n/\psi \le 2n/\psi^{1/2}$. By Corollary~\ref{cor:target}~(ii), for all $t < \s$ we have $|\TT_c(\m^t[G])| = \OM(n^2)$. There must then exist some constant $p$ such that
$$
\Prob{I_t = 1 \mid \m^{t-1}[G], t < \s} \ge  p.
$$
We then have
\al{
  \Prob{\s > s} & \le \Prob{\mathrm{Bin}(s, p) \le \frac{2n}{\psi^{1/2}}} = o(1). \label{eq:sigmabound}
}
This finishes the proof of (ii). For (i), we take the target set to be $\TT_k(H)$ for any $H\in \SS$. Corollary~\ref{cor:target}~(i) shows that this has size $\OM(n^2)$ as long as $\lam_k(H) > 0$. Repeating the argument above, $I \ge n/\psi^{1/2}$ implies the existence of a $k$-factor, and we can again apply \eqref{eq:sigmabound}.
\end{proof}

\subsection{Rerandomizing $G_{n, \bp}$}

The relevance of sprinkling to $G_{n, \bp}$ is the following. For graphs $G$ with minimum degree $\d$ we define a set of edges $\F(G) \subseteq G$, which is essentially the set of edges $e\in G$ such that $G-e$ has minimum degree $\d$ (the full definition of $\F(G)$ is more involved, see Section~\ref{sec:rerandom}). For $G\in \GG_{n, \bp}$ we attempt to define a random sequence of subgraphs $G = G^{0} \supseteq G^{-1} \supseteq\dots\supseteq G^{-r}$, obtaining $G^{-i-1}$ by removing from $G^{-i}$ an edge chosen uniformly at random from $\F(G^{-i})$, if possible. This fails if $\F(G^{-i})$ is empty for some $i$, in which case we let $G^{-j} = \bot_j$ for all $j > i$.

Let $i > 0$. For any $H$ such that $\Prob{G^{-i} = H} > 0$ we define
$$
\nu_H(H') = \Prob{G^{-i+1} = H'\mid G^{-i} = H}.
$$
If $H$ is a graph, i.e. $H\ne \bot$, we can view $\nu_H$ as a measure on edges $e\notin H$, identifying $\nu_H(e) = \nu_H(H + e)$. This defines an $i$-round sprinkling scheme $\nu$ for graphs $H$, and if we define $\nu^j[\bot_i]$ by
$$
\Prob{\nu^j[\bot_i] = H'} = \Prob{G^{-i + j} = H' \mid G^{-i} = \bot_i},
$$
then
$$
\nu^j[G^{-i}] \stackrel{d}{=} G^{-i+j}, \quad j \le i.
$$
Let $\HH_r$ be the class of graphs $H$ with $\Prob{G^{-r} = H} > 0$, and let $\HH_r^* = \HH_r\cup \{\bot_r\}$. Let $\G_{n, \bp}^* = G^{-r}$. Let $\e > 0$ and define a random graph $\G_{n, \bp} \in \HH_r$ by
$$
\Prob{\G_{n, \bp} = H} = \Prob{\G_{n, \bp}^* = H \mid \G_{n, \bp}^* \ne \bot}.
$$
We will pick $r$ so that $\G_{n, \bp}^* \ne \bot$ whp. Then $\nu^r[\G_{n, \bp}]$ is close in distribution to $G_{n, \bp}$ in the sense that for any graph property $\mathbb{P}$,
\al{
  \Prob{G_{n, \bp}\notin \mathbb{P}} & = \Prob{\nu^r[\G_{n, \bp}^*] \notin \mathbb{P}} \\
  & \le \Prob{\G_{n, \bp}^*\ne \bot} \Prob{\nu^r[\G_{n, \bp}] \notin \mathbb{P}} + \Prob{\G_{n, \bp}^* = \bot} \\
  & \le o(1) + \Prob{\nu^r[\G_{n, \bp}] \notin \mathbb{P}}.
}
Given some $\F$, for integers $r\ge 0$ and constants $\a, \e, \s > 0$ we define $\GG_r = \GG_r(\t, \e, \s, \F)$ as the set of graphs $G\in \HH_r$ for which $\nu$ is a $(\t, \e, \s n)$-sprinkler.
\begin{proposition}\label{prop:rerandom}
Suppose $\bp$ is sparse and linearly unbounded. Let $\e > 0$. There exists a choice of $\F$ and a constant $\r_0 = \r_0(\bp, \F, \e)$ such that the following holds: for any $0 < \r \le \r_0$ there exist constants $0 < \s < \r$ and $\t > 0$ such that if $r = \r n$ then $\G_{n, \bp}^*\in \GG_r(\t, \e, \s, \F)$ whp.
\end{proposition}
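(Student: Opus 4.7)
The plan is to define a suitable ``removable-edge'' set $\F$ and then analyze the reverse chain $\nu$ via Bayes' rule applied to the forward deletion chain. I would let $\F(G)$ be the set of edges $uv \in G$ with $d_G(u) > \d$ and $d_G(v) > \d$ (so that deletion preserves the minimum degree), subject to further restrictions --- e.g., forbidding $u$ or $v$ to lie in a small ``pinned'' set $U$ of the highest-degree vertices, and forbidding $uv$ from lying on a very short cycle --- chosen so that $|\F(G)|$ depends, to leading order, only on the degree sequence of $G$. Linear unboundedness supplies $\OM(n)$ vertices of every fixed degree exceeding $\d$, guaranteeing $|\F(G)| = \OM(n)$ initially.

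First I would verify that the forward process succeeds whp, so that $\G^*_{n,\bp}\ne \bot$. Each deletion step decreases exactly two vertex degrees by $1$, so after $i \le r = \r n$ steps at most $2\r n$ vertices have dropped to degree $\d$. Since linear unboundedness provides $\e_{\d+1} n$ vertices of degree at least $\d+1$ in $G_{n,\bp}$, choosing $\r_0$ small relative to $\e_{\d+1}$ keeps $|\F(G^{-i})| = \OM(n)$ throughout by a union bound over the $r$ deletion steps.

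For the reverse analysis, fix $i$ and a graph $H$ in the support of $G^{-i}$, together with an edge $e \notin H$ satisfying $e \in \F(H + e)$. Bayes' rule yields
\al{
  \nu_H(e) = \frac{\Prob{G^{-i+1} = H + e}}{|\F(H+e)|\, \Prob{G^{-i} = H}}.
}
The normalizer $\Prob{G^{-i} = H}$ depends only on $H$, and $|\F(H+e)|$ is essentially a function of the degree sequence of $H+e$ (hence the same for all valid $e$), so the $(\t,\e)$-randomness of $\nu_H$ reduces to showing that $\Prob{G^{-i+1} = H + e}$ is roughly uniform over a set of $\OM(n^2)$ edges $e$. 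Expanding this probability as a sum over forward chains $G = G^{0} \to \dots \to G^{-(i-1)} = H + e$ originating from a uniform $G \in \GG_{n,\bp}$ with weights $\prod_{j} |\F(G^{-j})|^{-1}$, and using the concentration of $|\F(\cdot)|$, the weights collapse into a common deterministic factor. Hence $\Prob{G^{-i+1} = H+e}$ is proportional, up to $1 + o(1)$, to $N(H + e) := |\{G \in \GG_{n,\bp} : H + e \subseteq G\}|$. A configuration-model / switching estimate then shows that $N(H + e)$ varies by at most a bounded factor as $e$ ranges over the $\OM(n^2)$ admissible pairs avoiding $U$, yielding the desired $\t$.

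The main technical obstacle will be making these approximations hold \emph{deterministically} for every $H$ in the support of the sprinkler, not merely whp over $H$: the definition of $\GG_r$ requires $\nu_H$ to be $(\t,\e)$-random at \emph{every} $H$ reachable by the first $\s n$ reverse steps. One must therefore identify a deterministic typical class $\HH^* \subseteq \HH_r$ on which $|\F(\cdot)|$ is tightly concentrated and the switching estimates hold, and then show that the forward chain stays within $\HH^*$ throughout whp via a union bound over the $r = \r n$ deletion steps, together with a second union bound over the $\s n$ reverse steps. The parameters must be fixed in the order $\F \to \r_0 \to \s \to \t$, with each chosen small enough to absorb the accumulated approximation error from the previous.
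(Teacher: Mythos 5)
Your outline captures the right high-level mechanics (Bayes' rule for the reverse kernel, a switching-style comparison, and the key observation that $(\t,\e)$-randomness must hold \emph{deterministically} over the support rather than merely whp over $H$), but several of the intermediate steps do not hold as stated, and the gaps correspond precisely to the hard parts of the paper's argument.

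First, the claim that the weights $\prod_j |\F(G^{-j})|^{-1}$ ``collapse into a common deterministic factor,'' so that $\Prob{G^{-i+1}=H+e}$ is proportional to $N(H+e)$ up to $1+o(1)$, is false. Even if $|\F(\cdot)| = \Theta(n)$ throughout, different removal chains pass through different graphs, and $|\F(\cdot)|$ varies by $O(1)$ per step; over $r=\Theta(n)$ steps these perturbations accumulate to a multiplicative constant, not $1+o(1)$. The paper's Subclaim~\ref{cl:piG} shows exactly this: for edge sequences at bounded ``distance,'' the product weights $\pi_G$ differ by a constant $c_1$, not $1+o(1)$. Moreover, reducing to $N(H+e) = |\{G : H+e\subseteq G\}|$ discards the crucial $\F$-membership constraint on the removal order: not every $G\supseteq H+e$ is reachable by a valid forward chain, and not every ordering of $G\setminus(H+e)$ is admissible. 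This is why the paper instead works with the sets $A_e(G)$ of admissible removal sequences and compares $\m_{s-1}(G+e)$ to $\m_{s-1}(G+f)$ directly through a many-to-many switching on sequences (Subclaims~\ref{cl:piG}--\ref{cl:Nf}), getting only a one-sided constant-factor comparison for $f$ in the restricted set $R_1(G)$. The restriction to $R_1(G)$ (both endpoints below the cutoff $C_n$) is precisely what makes the sequence-switching possible, and the student's sketch does not identify this.

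Second, the initialization argument is incorrect in two places. Linear unboundedness does \emph{not} supply $\OM(n)$ vertices of every fixed degree exceeding $\d$; it supplies $\OM(n)$ vertices of degree at least $D$ for every $D$, which is why the paper introduces the cutoff $\CD=(C_n,D_n)$ and works with the band $[\d+1,D_n]$. And knowing that $\OM(n)$ vertices have degree greater than $\d$ does not imply $|\F(G)| = \OM(n)$: you need $\OM(n)$ \emph{edges} both of whose endpoints lie in that set, which is a genuine structural property of the random graph. The paper's Claim~\ref{cl:adG} establishes this via a configuration-model exposure argument, and simultaneously shows the stronger property $g(d;G^{-r},\bp_0)=\OM(n)$ for every intermediate degree $d\in\{\d+1,\dots,C_n\}$, which is needed for Subclaim~\ref{cl:atob} and hence for the switching in Subclaim~\ref{cl:Nf}. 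Your sketch does not supply a substitute for any of this.
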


\section{Proof of Theorem~\ref{thm:main}}\label{sec:Gnd}

Recall from Section~\ref{sec:properties} the graph classes
\al{
  \SS_\g & = \{G : \text{$G$ is $(13/12, \g)$-sparse}\}, \\
  \BB^{(k)} & = \{G:  \beta_k(G) \le 0\}, \\
  \CC & = \{G : \psi(G) \ge \ln^{1/2}n\}.
}
\begin{lemma}\label{lem:Gnd}
Suppose $\bp$ is a sparse and light-tailed. There exists a $\g > 0$ such that $G_{n, \bp}\in \SS_\g\cap \CC$ whp.
\end{lemma}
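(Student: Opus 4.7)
I plan to prove both claims by first-moment estimates in the configuration model $G^*_{n,\bp}$ and then pass to $G_{n,\bp}$ by conditioning on simplicity. The key input from light-tailedness is that $S_2 := \sum_v d_v(d_v-1) = O(n)$: the bound $D(t) \le Cn(t/n)^\alpha$ forces the $t$-th largest degree to be $O((n/t)^{1-\alpha})$, and $\sum_v d_v^2$ is then controlled by an integral that converges precisely because $\alpha > 1/2$. Combined with $m = \Theta(n)$ from sparsity, the ``branching ratio'' $R := S_2/(2m)$ is a bounded constant.

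\textbf{Sparsity.} For a fixed set $S$ with $|S| = s$ and $d(S) = D$, the standard Poissonian tail bound in the configuration model gives $\Prob{e(S) \ge k} \le (eD^2/(4mk))^k$ once $k$ lies above the mean $D^2/(4m)$. I take $k = \cl{(13/12)s} + 1$, apply $\binom{n}{s} \le (en/s)^s$, and split the range of $s$: for large $s$ (say $s \ge n^\beta$ for a small $\beta$) I use $D(s) \le Cn^{1-\alpha}s^\alpha$, after which the summand simplifies to $(C_1(s/n)^\eta)^s$ for a positive exponent $\eta = \eta(\alpha, \mu)$ and thus decays geometrically once $\gamma$ is small; for small $s$ I use the cruder bound $D(s) \le \Delta_n s \le n^{1/6}s$ together with the trivial $e(S) \le \binom{s}{2}$ (which already eliminates $s \le 4$ outright, and yields super-polynomial decay for $s$ growing). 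Summing over $s \le \gamma n$ gives $o(1)$ provided $\gamma$ is chosen sufficiently small in terms of $C, \alpha, \mu$.

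\textbf{Effective girth.} A routine configuration-model computation yields, for $\ell = o(m)$,
\[
\E{\# \ell\text{-cycles in } G^*_{n,\bp}} \le \frac{S_2^\ell}{2\ell(2m)^\ell}(1+o(1)) = \frac{R^\ell}{2\ell}(1+o(1)).
\]
Writing $\psi_0 = \cl{\ln^{1/2}n}$ and bounding vertices on short cycles by $\ell$ times the number of $\ell$-cycles,
\[
\E{c_{\psi_0}(G^*_{n,\bp})} \le \sum_{\ell=3}^{\psi_0} \ell \cdot \frac{R^\ell}{2\ell}\cdot O(1) = O(R^{\psi_0}) = e^{O(\ln^{1/2}n)} = n^{o(1)},
\]
which is far below $n/\psi_0 = n/\ln^{1/2}n$. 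Markov's inequality then gives $c_{\psi_0} \le n/\psi_0$ whp, which is exactly $\psi(G_{n,\bp}) \ge \ln^{1/2}n$.

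\textbf{Main obstacle.} The one non-routine point is the passage from $G^*_{n,\bp}$ to $G_{n,\bp}$: because $\Delta_n \le n^{1/6}$ is allowed, the probability that $G^*_{n,\bp}$ is simple need not be bounded away from $0$ (the expected number of multi-edges is of order $(\sum_v \binom{d_v}{2})^2/m^2$, which can be polynomial in $n$). The standard McKay--Wormald switching argument transfers bounds at the cost of at most a slowly-growing multiplicative factor, and since our first-moment bounds above are either polynomially small (for sparsity) or of size $n^{o(1)}$ against the target $n/\ln^{1/2}n$ (for girth), this factor is comfortably absorbed.
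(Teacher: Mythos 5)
Your effective-girth argument is essentially the paper's: bound $S_2=\sum_v d(v)(d(v)-1)=O(n)$ via light-tailedness (this is exactly the content of the paper's Lemma~\ref{lem:randomset}, proved through the same $d(u_t)\le C(n/t)^{1-\alpha}$ estimate), compute $\E{X_s}\le e^{O(s)}$ for the number of $s$-cycles, and apply Markov to $Y_{\psi_0}$. That part is sound.

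The sparsity argument, however, has a genuine gap. You union-bound over all $\binom{n}{s}$ sets of size $s$ and estimate each term by $(eD(s)^2/(4mk))^k$ with the worst-case degree sum $D(s)\le Cn(s/n)^\alpha$ and $k=\lceil(13/12)s\rceil$. Tracking the exponent of $s/n$, the per-set bound gives $(s/n)^{(13/12)(2\alpha-1)}$ per vertex, but $\binom{n}{s}\le(en/s)^s$ contributes $(s/n)^{-1}$ per vertex, so the summand behaves like $\bigl(C'(s/n)^{(13/12)(2\alpha-1)-1}\bigr)^s$. The exponent $(13/12)(2\alpha-1)-1$ is nonpositive for all $\alpha\le 25/26$, so the sum does not vanish under the hypothesis $\alpha>1/2$. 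The same loss shows up already at bounded $s$: for $s=4$, $k=5$, the worst-case bound gives $\binom{n}{4}(eD(4)^2/20m)^5=O(n^4\cdot n^{-5(2\alpha-1)})$, which blows up when $\alpha$ is close to $1/2$. In other words, substituting the \emph{worst-case} $D(s)$ into a Poisson tail and then paying $\binom{n}{s}$ is too lossy, because for most $S$ the quantity $d(S)$ is $\Theta(s)$, not $D(s)$.

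The paper closes this gap with two ideas you are missing. First, it reduces to sets with \emph{internal} minimum degree at least $2$: if $e(S)>\theta|S|$ and some $v\in S$ has $d_S(v)\le 1$, then $S\setminus\{v\}$ still violates sparsity, so one may restrict the union bound to such sets. Second, for such $S$, the probability is bounded by choosing a committed pair of configuration points at each vertex ($\prod_{v\in S}\binom{d(v)}{2}$ ways) and only $2t-2s=\Theta(s/12)$ additional points ($\binom{d(S)-2s}{2t-2s}$ ways). The first factor is then summed over $S$ via $\binom{n}{s}\E{\prod_{v\in\mathbb S}\binom{d(v)}{2}}\le\binom{n}{s}\kappa^s$ (Lemma~\ref{lem:randomset}), which cancels the $(n/s)^s$ exactly up to $e^{O(s)}$; only the small $\binom{d(S)-2s}{2t-2s}\cdot(s/n)^{t}$ piece remains to produce the exponent $(2\alpha-1)(\theta-1)s$, positive for every $\alpha>1/2$. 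Without the two-points-per-vertex decoupling, no choice of $\gamma$ rescues your estimate.

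Finally, your ``main obstacle'' is not actually an obstacle here: light-tailedness already forces $\lambda=\sum_v d_v(d_v-1)/(4m)=O(1)$, so the configuration graph is simple with probability bounded away from $0$; the paper transfers bounds through the standard $\Prob{G_{n,\bd}\in\mathbb P}\le(1+o(1))e^{\lambda(\lambda+1)}\Prob{C_{n,\bd}\in\mathbb P}$ with only a constant multiplicative penalty. Your proposed McKay--Wormald switching detour is unnecessary (and its ``slowly-growing factor'' would need justification), though it would not be wrong.
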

\noindent Lemma~\ref{lem:Gnd} is proved in Section~\ref{sec:Gndproof}.

Let $k\ge 2$. Fix some sparse, light-tailed, linearly unbounded and strongly $k$-connected $\bp$ with minimum degree $\d\ge k+2$, and let $\g > 0$ be as provided by Lemma~\ref{lem:Gnd}. As $\bp$ is strongly $k$-balanced there exists a $\th > 0$ such that whp, $G_{n, \bp} \in \BB_{\g/2, \th}^{(k)}$.

Let $0 < \e < \frac{\g^2}{2(k+2)^2}$ and let $\F, \r_0, \s, \t$ and $\GG_r = \GG_r(\t, \e, \s, \F)$ be as provided by Proposition~\ref{prop:rerandom}, so in particular $\G_{n, \bp}^*\in \GG_r$ whp whenever $r = \r n$ for some $\r\le \r_0$. Let $\r = \min\{\th / 2, \r_0\}$, set $r = \r n$, and define 
$$
\NN_r = \GG_r(\t, \e, \s) \cap \SS_\g \cap \CC \cap \BB^{(k)}.
$$
Then $\G_{n, \bp}^*\in \NN_r$ whp. Indeed, $\G_{n, \bp}^*$ is in $\GG_r(\a, \e, \s)$ whp by Proposition~\ref{prop:rerandom}, and $G_{n, \bp}\in \SS_\g \cap \CC\cap \BB^{(k)}_{\g/2, \th}$ whp implies that $\G_{n, \bp}^*\in \SS_\g\cap \CC\cap \BB^{(k)}$ since $\SS_\g\cap \CC$ is decreasing and by Lemma~\ref{lem:StoB}.

By Proposition~\ref{prop:rerandom} there exists a family of measures $\nu$ which is an $(\t, \e, \s n)$-sprinkler for any $H\in \NN_r$. For any graph property $\mathbb{P}$ we then have, recalling that $\nu^r[\G_{n, \bp}^*] \stackrel{d}{=} G_{n, \bp}$,
\al{
  \Prob{G_{n, \bp}\notin \mathbb{P}} & \le \Prob{\nu^r[\G_{n, \bp}^*] \notin \mathbb{P} \mid \G_{n, \bp}^* \in \NN_r} + \Prob{\G_{n, \bp}^*\notin \NN_r} \\
  & \le o(1) + \max_{G\in \NN_r} \Prob{\nu^r[G] \notin \mathbb{P}}.
}
With $\mathbb{P} = \HH\cup \ol{\SS_\g}\cup \ol{\CC}$, Proposition~\ref{prop:sprinkling}~(ii) implies that
$$
\Prob{G_{n, \bp}\notin \HH\cup \ol{\SS_\g}\cup \ol{\CC}} \le o(1).
$$
Lemma~\ref{lem:Gnd} then implies that
$$
\Prob{G_{n, \bp}\notin \HH} \le \Prob{G_{n, \bp}\notin \HH\cup \ol{\SS_\g}\cup \ol{\CC}} + \Prob{\G_{n, \bp} \notin \SS_\g\cap \CC} = o(1).
$$
Likewise, applying Proposition~\ref{prop:sprinkling}~(i) shows that $G_{n, \bp}\in \FF_k$ whp.

This finishes the proof of the main theorem. The remainder of the paper is devoted to postponed proofs.

\section{Proof of Proposition~\ref{prop:window}}\label{sec:bipartite}

Assume $G$ is a graph on $n$ vertices and $m=\m n$ edges with effective girth $\psi\ge C\m^2$, for some constant $C > 0$ to be chosen. Recall that we define
$$
\beta_k(G) = \max_{A\cap B = \emptyset}e(A, B) + k(|A| - |B|) - d(A).
$$
Suppose $2\lam_k(G) > n / \psi^{1/2}$. We prove that
$$
2\lam_k(G) \le \frac{n}{\psi^{1/2}} + \beta_k(G).
$$
If $\beta_k(G) < 0$ this is a contradiction, otherwise this provides an upper bound for $\lam_k(G)$. In either case we may conclude that
$$
2\lam_k(G) \le \frac{n}{\psi^{1/2}} + \max\{0, \beta_k(G)\}.
$$
Let $F$ be a $k$-matching with $n - |F| = \lam_k(G)$, and let
$$
X_0 = X_F = \{v : d_F(v) < k\}.
$$
Recursively define, for $j\ge 1$,
\begin{itemize}
\item $X_{2j-1}$ as the set of $v\notin X_0\cup X_1\cup \dots \cup X_{2j-2}$ such that there exists an edge $uv\notin F$ with $u\in X_{2j-2}$,
\item $X_{2j}$ as the set of $v\notin X_0\cup X_1\cup \dots \cup X_{2j-1}$ such that there exists an edge $uv\in F$ with $u\in X_{2j-1}$.
\end{itemize}
For $r \ge 1$ define a vertex partition by
\al{
  A_r & = X_0 \cup X_2 \cup \dots \cup X_{2r}, \\
  B_r & = X_1 \cup X_3 \cup \dots \cup X_{2r-1}, \\
  C_r & = V \setminus (A_r\cup B_r).
}
We make a sequence of claims about this partition.
\begin{claim}\label{cl:a} Any edge contained in $A_r$ is either in $F$ or on a cycle of length at most $4r + 1$.
\end{claim}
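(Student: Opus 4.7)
The plan is to adapt the classical augmenting-path argument from matching theory. Fix an edge $uv$ contained in $A_r$ with $uv \notin F$; then $u \in X_{2i}$ and $v \in X_{2j}$ for some $0 \le i, j \le r$, and we may assume $i \ge j$. Using the recursive definition of the $X_m$, I first build alternating simple paths $P_u = u_0 u_1 \dots u_{2i}$ (with $u_0 = u$) and $P_v = v_0 v_1 \dots v_{2j}$ (with $v_0 = v$), where $u_\ell \in X_{2i-\ell}$, $v_\ell \in X_{2j-\ell}$, and whose edges alternate between being in $F$ and not, starting from the in-$F$ edge $u_0 u_1 \in F$ (respectively $v_0 v_1 \in F$) that witnesses membership of $u$ in $X_{2i}$ (respectively $v$ in $X_{2j}$). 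Both endpoints $u_{2i}, v_{2j}$ then lie in $X_0$, and each path is simple because the layers $X_m$ are pairwise disjoint.

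Next I examine the walk $W$ obtained by concatenating $P_u$ reversed, the edge $uv$, and $P_v$; its length is $2i+2j+1 \le 4r+1$, and its edges alternate non-$F$/$F$, beginning and ending with a non-$F$ edge. If $W$ happens to be a simple path with $u_{2i} \ne v_{2j}$, then $F' := F \triangle E(W)$ is a $k$-matching of size $|F|+1$: the alternation preserves the $F$-degree at every internal vertex of $W$, and at the endpoints $u_{2i}, v_{2j} \in X_0$ we have $d_F < k$, so adding one $F$-edge keeps their degrees at most $k$. This contradicts the maximality of $|F|$, so this case cannot occur.

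Consequently there must exist indices $0 \le \ell \le 2i$ and $0 \le \ell' \le 2j$ with $u_\ell = v_{\ell'}$ and $(\ell, \ell') \ne (0,0)$ (the latter because $u \ne v$). Picking such a pair minimizing $\ell + \ell'$, minimality forces the initial segments $u_0 u_1 \dots u_\ell$ and $v_0 v_1 \dots v_{\ell'}$ to share only the vertex $u_\ell = v_{\ell'}$ (otherwise a smaller intersection would arise), so their concatenation is a simple $u$-$v$ path of length $\ell + \ell' \le 2i+2j \le 4r$ that avoids $uv$. Adjoining $uv$ produces a cycle of length at most $4r+1$ containing the edge, as claimed. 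The main delicate points I expect are the degree bookkeeping at the endpoints in the augmentation step (in particular the sub-case $u_{2i} = v_{2j}$, which is absorbed into the cycle construction rather than augmentation) and the verification that minimality of $(\ell,\ell')$ yields disjoint initial segments; both follow cleanly from the layered definition of the $X_m$ and the maximality of $F$.
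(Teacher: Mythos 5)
Your argument is correct and follows the same route as the paper's proof: build the alternating paths from $u$ and $v$ back to $X_0$, observe that if they were disjoint the symmetric difference of $F$ with the concatenated walk would yield a larger $k$-matching (contradicting maximality), and extract a short cycle from the first intersection point. You spell out the minimality argument for the intersection and the endpoint degree bookkeeping more explicitly than the paper does, but the structure and key steps are identical.
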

\begin{claim}\label{cl:b} Every $v\in A_{r}$ has at least $d(v)-k +1$ neighbours in $A_r\cup B_{r+1}$, and any edge incident to $v$ which is not contained in $A_r\cup B_{r+1}$ is in $F$.
\end{claim}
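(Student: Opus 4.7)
The plan is a direct unpacking of the recursive definitions of the layers $X_j$. Fix $v \in A_r$, and let $j \in \{0, 1, \dots, r\}$ be the unique index with $v \in X_{2j}$. The key observation I would establish first is that any non-$F$ edge incident to $v$ lands in $A_r \cup B_{r+1}$. Indeed, suppose $vw \notin F$ and $w \notin X_0 \cup X_1 \cup \dots \cup X_{2j}$; then by the very definition of $X_{2j+1}$ (with $v$ playing the role of $u$), we have $w \in X_{2j+1}$. Combined with
$$
X_0 \cup X_2 \cup \dots \cup X_{2j} \subseteq A_r \quad\text{and}\quad X_1 \cup X_3 \cup \dots \cup X_{2j+1} \subseteq B_{r+1},
$$
this already yields the second assertion of the claim: any edge $vw$ with $w \notin A_r \cup B_{r+1}$ must belong to $F$.

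For the neighbor count I would split on whether $v \in X_F$. If $j = 0$, then $v \in X_F$ and $d_F(v) \le k - 1$, so $v$ has at least $d(v) - (k-1) = d(v) - k + 1$ non-$F$ neighbors, all of which lie in $A_r \cup B_{r+1}$ by the observation above. If $1 \le j \le r$, then $v \notin X_F$ forces $d_F(v) = k$, which produces $d(v) - k$ non-$F$ neighbors in $A_r \cup B_{r+1}$. Membership $v \in X_{2j}$ further guarantees at least one $F$-edge $vu$ with $u \in X_{2j-1} \subseteq B_r \subseteq B_{r+1}$, and this $F$-neighbor is distinct from the non-$F$ neighbors since $G$ is simple. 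Summing, $v$ has at least $d(v) - k + 1$ neighbors inside $A_r \cup B_{r+1}$, as required.

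There is no serious obstacle beyond careful bookkeeping; the main thing to be vigilant about is the origin of the ``$+1$'' in each case. For $j \ge 1$ it comes from the single $F$-edge witnessing $v \in X_{2j}$, while for $j = 0$ it is absorbed into the slack $d_F(v) \le k-1$. The remaining $F$-edges incident to $v$ are allowed to exit $A_r \cup B_{r+1}$ without contradicting the claim, so no further case analysis on their destinations is needed, and the argument terminates.
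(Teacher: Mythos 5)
Your proof is correct and follows essentially the same route as the paper: split on whether $v \in X_0$ (where $d_F(v) \le k-1$ supplies the ``$+1$'') or $v \in X_{2j}$ with $j \ge 1$ (where the $F$-edge witnessing membership in $X_{2j}$ supplies it), and in both cases use the recursive definition of the $X_\ell$ to locate non-$F$ neighbours in $A_r \cup B_{r+1}$. The only difference is that you spell out a couple of bookkeeping points (e.g.\ that a non-$F$ neighbour of $v \in X_0$ could itself lie in $X_0$, and that simplicity of $G$ keeps the $F$-neighbour distinct) that the paper leaves implicit.
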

\begin{claim}\label{cl:c} Suppose $v\in B_r$. If $vw\in F$ then either $w\in A_{r}$ or $vw$ is on a cycle of length at most $4r-1$.
\end{claim}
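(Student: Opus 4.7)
The plan is to first show that if $v \in X_{2j-1}$ and $vw \in F$ with $w \notin A_r$, then $w$ must also lie in $X_{2j-1}$, and then to construct the cycle via a blossom-style argument, with the maximality of $F$ ruling out an augmenting-path obstruction. For the first step, I would trace the defining rules of the $X_i$'s. If $w \notin X_0 \cup \dots \cup X_{2j-1}$, then the rule defining $X_{2j}$ places $w \in X_{2j} \subseteq A_r$ via the $F$-edge $vw$. If $w \in X_{2i-1}$ with $i < j$, then the rule at step $2i$ would have placed $v \in X_{2i}$, contradicting $v \in X_{2j-1}$. And if $w$ lies at any even level $\le 2r$, then $w \in A_r$ by definition. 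So assuming $w \notin A_r$, we must have $w \in X_{2j-1}$.

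With $v, w$ both at level $2j-1$, each admits an alternating path $P_v, P_w$ of length $2j-1$ from some vertex of $X_0$, with edges alternating non-$F$ and $F$ and starting and ending with non-$F$. If for some choice of these paths there is a shared vertex, I would take $z = x_l = y_l$ at the largest common BFS level $l$ (unambiguous since BFS levels are unique per vertex); then concatenating $z \to v$ along $P_v$, the edge $vw$, and $w \to z$ along $P_w$ yields a simple cycle (by maximality of $l$) of length $2(2j-1-l)+1 \le 4j-1 \le 4r-1$ passing through $vw$.

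The main obstacle is ensuring that a shared $z$ must exist. If instead $P_v \cap P_w = \emptyset$ for every valid choice, pick any pair: then $P_v + vw + P_w^{-1}$ is a simple alternating path from $v_0 \in X_0$ to $w_0 \in X_0$ whose edges alternate starting and ending with non-$F$. Both $v_0, w_0$ are deficient ($d_F < k$ as members of $X_0$), so swapping $F$ and non-$F$ along this path produces a $k$-matching $F'$ with $|F'| = |F|+1$: each internal vertex trades one $F$-edge for a non-$F$-edge, while the two endpoints each gain one $F$-edge (still $\le k$). This contradicts the choice of $F$ as a maximum $k$-matching fixed at the start of the proof of Proposition~\ref{prop:window}. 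Hence a shared $z$ does exist, and the cycle construction succeeds.
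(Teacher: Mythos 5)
Your proposal is correct and takes essentially the same route as the paper: you eliminate the case $w\in X_i$ for $i<2j-1$ (and the case of $w$ at a later or even level) exactly as the paper does, then fix alternating paths from $X_0$ to $v$ and to $w$, and observe that if they were disjoint, concatenating them with the $F$-edge $vw$ and taking the symmetric difference with $F$ would produce a larger $k$-matching with both endpoints still under-saturated, contradicting the maximality of $F$. Your version is slightly more explicit than the paper's (checking that the cycle obtained from the deepest shared vertex is simple, and verifying that $F\triangle P$ remains a $k$-matching), but there is no substantive difference in the argument.
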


\begin{proof}[Proof of Claim~\ref{cl:a}]
Suppose $uv\notin F$ is contained in $A_r$. Let $i, j\ge 0$ be such that $u\in X_{2i}, v\in X_{2j}$. By construction of the $X_\ell$, there exists a sequence of vertices $u_\ell\in X_\ell$, $\ell=0,1,\dots,2i$ with $u_{2i} = u$ such that $\{u_\ell, u_{\ell+1}\}\in G\setminus F$ for even $\ell$ and $\{u_\ell,u_{\ell+1}\}\in F$ for odd $\ell$. This is an $F$-alternating path. Likewise there exists an $F$-alternating path $(v_0,v_1,\dots,v_{2j}=v)$ from $X_0$ to $v$. If the paths do not intersect, then $P = (u_0,u_1,\dots,u_{2i}, v_{2j},v_{2j-1},\dots,v_1,v_0)$ is an $F$-alternating path with $u_0u_1, v_0v_1\notin F$, and $F\triangle P$ is a $k$-matching larger than $F$, contradicting the maximality of $F$. Hence the paths $(u_0,\dots,u)$ and $(v_0,\dots,v)$ must intersect, which implies the existence of a cycle of length at most $2i+2j+1\le 4r + 1$ containing $uv$.
\end{proof}
\begin{proof}[Proof of Claim~\ref{cl:b}]
Suppose $u\in A_r$. If $u\in X_0$ then $d_F(u) < k$, and any edge $uv\notin F$ has $v\in X_1\subseteq B_r$. If $u\in X_{2i}$ for some $i > 0$ then there is at least one $uv\in F$ with $v\in X_{2i-1}$, and any edge $uw\notin F$ either has $w\in A_{i}\cup B_{i}$ or $w\in X_{2i+1}$.
\end{proof}
\begin{proof}[Proof of Claim~\ref{cl:c}]
Let $j\ge 1$ be such that $v\in X_{2j-1}$. We have $d_F(v) = k$ by definition, since otherwise $v\in X_0$.

Suppose $vw\in F$. If $w\notin X_0\cup X_1\cup \dots X_{2j-1}$ then $w\in X_{2j}\subseteq A_r$. If $w\in X_{2i-1}$ for some $i < j$ then $v$ would have been placed in $X_{2i}$, contradicting $v\in X_{2j-1}$. We conclude that either $w\in X_{2j-1}$ or $w\in A_r$. Finally, if $w\in X_{2j-1}$, suppose $(v_0,\dots,v_{2j-1} = v)$ and $(w_0, \dots,w_{2j-1} = w)$ are $F$-alternating paths. If they do not intersect then $P = (v_0,\dots,v,w,\dots,w_0)$  has $|F\triangle P| > |F|$, so the paths must intersect. We conclude that $vw$ is on a cycle of length at most $2(2j-1)+1 \le 4r - 1$.
\end{proof}

Now fix some $r\ge 1$. Suppose $e\in G\setminus F$ is an edge incident to $A_r$. If it is contained in $A_r$ then it is on a cycle of length at most $4r + 1$ by Claim~\ref{cl:a}. If it has one endpoint in $C_r$ then by Claim~\ref{cl:b}, $e\in X_{2r}\times X_{2r+1}$. We have
\al{
  d(A_r) - d_F(A_r) & \le 2c_{4r+1} + e(X_{2r}, X_{2r+1}) + e(A_r, B_r) - e_F(A_r, B_r),
}
and reordering terms,
\al{
  e(A_r, B_r) - d(A_r) \ge e_F(A_r, B_r) - d_F(A_r) - 2c_{4r+1} - e(X_{2r}, X_{2r+1}).
}
Since $X_F\subseteq A_r$ we have
$$
d_F(A_r) = k|A_r| - 2\lam_k(G).
$$
Since every $v\in B_r$ has $d_F(v) = k$, Claim~\ref{cl:c} implies that
$$
e_F(A_r, B_r) \ge k|B_r| - c_{4r+1}.
$$
We then have
\al{
  \beta_k(A_r, B_r) =\ & k(|A_r| - |B_r|) + e(A_r, B_r)  - d(A_r) \\
  \ge\ & k(|A_r| - |B_r|) + e_F(A_r, B_r) - d_F(A_r) \\
  & - 2c_{4r + 1}-e(X_{2r}, X_{2r+1}) \\
  \ge\ & 2\lam_k(G) - 3c_{4r+1} - e(X_{2r}, X_{2r+1}).
}
As $\beta_k(A_r, B_r) \le \beta_k(G)$, we conclude that
\al{
  2\lam_k(G) \le\ & 3c_{4r+1} + e(X_{2r}, X_{2r+1}) + \beta_k(G).
}
It remains to choose $r$ so that $3c_{4r+1} + e(X_{2r}, X_{2r + 1}) \le n/\psi^{1/2}$. First note that
$$
\frac{n}{\psi^{1/2}} < 2\lam_k(G) = \sum_{v\in X_0} (k - d_F(X_0)) \le |X_0|.
$$
Let
$$
\e = \frac{1}{4\m \psi^{1/2}}.
$$
Let $r$ be the smallest integer for which $d(A_r) \le (1 + \e)d(A_{r-1})$. Then
$$
2m \ge d(A_{r-1}) \ge (1+\e)^{r-1}d(A_0) \ge (1+\e)^{r-1} \d|X_0| \ge (1+\e)^{r-1} \frac{2n}{\psi^{1/2}},
$$
so $(1+\e)^{r-1} \le \m \psi^{1/2}$. Since $\e = (4\m\psi^{1/2})^{-1} \le 1$ we have
$$
r \le 1 + \frac{\ln (\m\psi^{1/2})}{\ln (1 + \e)} \le 2 \e^{-1}\ln(\m\psi^{1/2}) \le \frac{\psi}{5},
$$
where we use $\psi \ge (C\m)^2$ for some large enough constant $C$ in the latter bound. Then $4r + 1 \le \psi$ and
$$
c_{4r + 1} \le c_\psi \le \frac{n}{\psi} \le \frac{n}{6\psi^{1/2}}.
$$
Note that $d(A_\ell) \le (1+\e)d(A_{\ell-1})$ implies
$$
e(X_{2\ell}, X_{2\ell+1}) \le d(X_{2\ell}) \le \e d(A_{\ell-1})\le  2m\e = \frac{n}{2\psi^{1/2}}.
$$
We conclude that
$$
3c_{4r + 1} + e(X_{2r}, X_{2r + 1}) \le \frac{n}{\psi^{1/2}},
$$
finishing the proof.

\section{Proof of Lemma~\ref{lem:posaish}}\label{sec:posaish}

Lemma~\ref{lem:posaish} has two parts, stated individually in the subsections below.

Suppose $F$ is a $k$-matching in $G = (V, E)$. We define a {\em walk (from $u$ to $v$)} as a sequence $P = (u = v_0,v_1,\dots,v_\ell = v)$ where $v_iv_{i+1}\in E$ for $0\le i < \ell$, allowing for edges to be repeated. Say that $P$ is {\em $F$-alternating} if the following holds for all $1\le i < \ell$. Let $P_i = (v_0,v_1,\dots,v_i)$ and $F_i = F\triangle P_i$. This is the set of edges that appear an odd number of times in total in $F$ and the multiset $P_i$. If $v_{i-1}v_i\in F_{i-1}$ we require that $v_iv_{i+1}\notin F_i$, and if $v_{i-1}v_i\notin F_{i-1}$ we require that $v_iv_{i+1}\in F_i$.

An $F$-alternating walk $(v_0,\dots,v_\ell)$ with $v_0v_1\in F$ and $v_{\ell-1}v_\ell\notin F$ is said to be of type $\downarrow\uparrow$, where one may think of walking along an edge of $F$ as going down, and an edge not in $F$ as going up. We similarly define $F$-alternating walks of type $\downarrow\downarrow, \uparrow\downarrow, \uparrow\uparrow$, where the membership of $v_0v_1$ in $F$ and $v_{\ell-1}v_\ell$ in $F_{\ell-1}$ determine the directions of the arrows.

For $\lor, \land\in \{\downarrow, \uparrow\}$ define
$$
A_F^{\lor\land}(u) = \{v : \text{ there is an $F$-alternating walk of type $\lor\land$ from $u$ to $v$}\}.
$$
The empty walk is defined to be of type $\uparrow\downarrow$. Note that if $x\in X$ and $y\in Y_x$ then $A_F^{\uparrow\downarrow}(y)\setminus\{x\}\subseteq Y_x$. Indeed, if $P$ is $F$-alternating of type $\uparrow\downarrow$ from $y$ to $z\ne x$ then $P\triangle F$ is a $k$-matching with $d_{F\triangle P}(z) < k$ and $d_{F\triangle P}(x) < k$.

Now suppose $F$ is a 2-factor. Our main interest will be in finding $F$-alternating walks $P = (w,\dots,z)$ of type $\downarrow\downarrow$ such that $\k(F\triangle P) < \k_2(G)$. Given such a walk, we have $\k_2(G+wz) < \k_2(G)$, so $w\in W$ and $z\in Z_w$. Let $B_F^{\lor\land}(u)$ be the set of vertices $v$ such that there exists an $F$-alternating walk $P$ of type $\lor\land$ from $y$ to $v$ such that $\k(F\triangle P) < \k_2(G)$. 

\subsection{Proof of Lemma \ref{lem:posaish} (i)}

Let $k\ge 2$. Suppose $G$ is a graph with $\d(G) \ge k + 1$ and $\lam_k(G) > 0$. Let $x\in X$. Lemma~\ref{lem:posaish}~(i) claims that there exists a nonempty vertex set $S_x$ such that
$$
e(S_x) \ge \frac{13}{12}|S_x|
$$
and $|S_x\cap Y_x| \ge \frac13|S_x| - 1$. We now prove this claim.

Let $F$ be a $k$-matching with $\lam_k(F) = \lam_k(G) > 0$ and let $x\in X_F$ so that $d_F(x) < k$. If $x$ is the only vertex with $d_F(x) < k$ then we must have $d_F(x) \le k - 2$, since $\lam_k(F) > 0$. Pick some walk $P = (x, x', x'')$ with $xx'\notin F$ and $x'x''\in F$, and replace $F$ by $F\triangle (x, x', x'')$. We may then assume that $|X_F| \ge 2$.

Let $y\in X_F\setminus \{x\}$. We prove that the set
$$
S = A_F^{\uparrow\uparrow}(y) \cup A_F^{\uparrow\downarrow}(y)
$$
is as desired. We first show that $e(S)$ is large by bounding $d_S(v)$ from below for all $v\in S$.

Suppose $v\in A_F^{\uparrow\downarrow}(y)$ and let $P = (y=v_0,\dots,v_\ell=v)$ be an $F$-alternating walk of type $\uparrow\downarrow$ from $y$ to $v$. Let $H = F\triangle P$, and note that $d_H(v) \le k-1$. For any edge $vw\notin H$, the walk $(v_0,\dots,v_\ell,w)$ shows that $w\in A_F^{\uparrow\uparrow}(y)\subseteq S$. This shows that $d_S(v) \ge d(v) - (k-1) \ge  2$.

Now suppose $v\in A_F^{\uparrow\uparrow}(y)$, letting $P = (y = v_0,\dots,v_\ell = v)$ be $F$-alternating of type $\uparrow\uparrow$. Let $H = F\triangle P$, and note that $|H| = |F| + 1$ with $d_H(y) = d_F(y) + 1 \le k$, $d_H(v) = d_F(v) + 1$ while $d_H(u) = d_F(u)$ for all $u\notin \{y, v\}$. Since $|F| = \lam_k(G)$ and $|H| > |F|$, it must be that $H$ is not a $k$-matching, so we have $d_H(v) = k + 1$. All $k+1$ edges of $H$ incident to $v$ may be used to extend the walk $P$, which shows that $d_S(v) \ge d_H(v) \ge k+1$.

We conclude that
\al{
  e(S) & \ge \frac12 \sum_{v\in A_F^{\uparrow\downarrow}(y)\setminus A_F^{\uparrow\uparrow}(y)} d_S(v) + \frac12 \sum_{v\in A_F^{\uparrow\uparrow}(y)} d_S(v) \\
  & \ge |A_F^{\uparrow\downarrow}(y)\setminus A_F^{\uparrow\uparrow}(y)| + \frac{k+1}2|A_F^{\uparrow\uparrow}(y)|. \label{eq:degbd}
}
We now show that $e(S) \ge \frac{13}{12}|S|$. Suppose $v\in A_F^{\uparrow\downarrow}(y)\setminus (\{y\}\cup A_F^{\uparrow\uparrow}(y))$, and let $P = (y = v_0,\dots,v_\ell = v)$ be $F$-alternating of type $\uparrow\downarrow$. Suppose $P$ is minimal in the sense that $v\notin \{v_0,\dots,v_{\ell-1}\}$. We have $v_{\ell-1}v\in F_{\ell-1}$, and since $P$ is minimal we must have $v_{\ell-1}v\in F$. We conclude that $A_F^{\uparrow\downarrow}(y)\setminus(\{y\}\cup A_F^{\uparrow\uparrow}(y))\subseteq N_F(A_F^{\uparrow\uparrow}(y))$, so $|A_F^{\uparrow\downarrow}(y)\setminus A_F^{\uparrow\uparrow}(y)| \le k|A_F^{\uparrow\uparrow}(y)| + 1$. So
\al{
  e(S_x) & \ge |A_F^{\uparrow\downarrow}(y)\setminus A_F^{\uparrow\uparrow}(y)| + \frac{k}{6}|A_F^{\uparrow\uparrow}(y)| + \left(\frac{k+1}{2} - \frac{k}{6}\right)|A_F^{\uparrow\uparrow}(y)| \\
  & \ge |A_F^{\uparrow\downarrow}(y)\setminus A_F^{\uparrow\uparrow}(y)| + \frac13\bfrac{|A_F^{\uparrow\downarrow}(y)\setminus A_F^{\uparrow\uparrow}(y)| - 1}{2} + \frac76|A_F^{\uparrow\uparrow}(y)| \\
  & = \frac76|S_x| - \frac16.
}
We have $|S_x| \ge 2$ since $y$ is incident to at least one edge not contained in $F$, so
$$
e(S_x) \ge \frac76|S_x| - \frac16 = \frac{13}{12}|S_x| + \frac1{12}|S_x| - \frac16 \ge \frac{13}{12}|S_x|.
$$

Finally, suppose $v\in A_F^{\uparrow\uparrow}(y)\setminus A_F^{\uparrow\downarrow}(y)$ and let $P$ be a minimal $F$-alternating walk of type $\uparrow\uparrow$ from $y$ to $v$. Let $vw\in F$. Then $w\in A_F^{\uparrow\downarrow}(y)$. We conclude that $A_F^{\uparrow\uparrow}(y)\setminus A_F^{\uparrow\downarrow}(y)\subseteq N_F(A_F^{\uparrow\downarrow}(y))$, so
$$
|A_F^{\uparrow\uparrow}(y)\setminus A_F^{\uparrow\downarrow}(y)| \le k|A_F^{\uparrow\downarrow}(y)|
$$
We then have, since $A_F^{\uparrow\downarrow}(y)\setminus \{x\}\subseteq Y_x$,
$$
|S_x| = |A_F^{\uparrow\uparrow}(y)\setminus A_F^{\uparrow\downarrow}(y)| + |A_F^{\uparrow\downarrow}(y)| \le (k+1)|A_F^{\uparrow\downarrow}(y)| \le (k+1)(|S_x\cap Y_x| + 1).
$$

\subsection{Proof of Lemma~\ref{lem:posaish}~(ii)}

Suppose $\d(G) \ge 4$, $\lam_2(G) = 0$, $\k_2(G) > 1$, and that $G$ is connected. Let $w\in W$ (if $W=\emptyset$ there is nothing to prove). Lemma~\ref{lem:posaish}~(ii) claims that there exists a nonempty vertex set $T_w$ such that
$$
e(T_w) \ge \frac76|T_w|
$$
and $|T_w\cap Z_w| \ge \frac13|T_w|$.

We proceed similarly to the proof of part (i). We first show that there exists a 2-factor $F\subseteq G$ such that $B_F^{\downarrow\downarrow}(w)\ne\emptyset$. To see this, as $w\in W$ there exists some $z\ne w$ such that $\lam_2(G + wz) = 0$ and $\k_2(G + wz) < \k_2(G)$, and we can let $F'$ be a 2-factor in $G+wz$ with $\k(F') = \k_2(G + wz)$. Then $wz\in F'$, as otherwise $F'\subseteq G$, contradicting $\k_2(G+wz) < \k_2(G)$. Let $F''\subseteq G$ be some 2-factor. The symmetric difference $F''\triangle F'$ consists of disjoint cycles $C_1\cup\dots\cup C_\ell$, where $wz\in C_1$ without loss of generality. Let $F = F''\triangle (C_2\cup\dots \cup C_\ell)$. This is a 2-factor, and $P = C_1\setminus \{wz\}$ is $F$-alternating of type $\downarrow\downarrow$, showing that $z\in B_F^{\downarrow\downarrow}(w)$.

Let $w\in W$, fix some 2-factor $F$ with $B_F^{\downarrow\downarrow}(w)\ne \emptyset$ and define
$$
T_w = B_F^{\downarrow\downarrow}(w)\cup B_F^{\downarrow\uparrow}(w).
$$
We show that $T_w$ is as claimed. First suppose $v\in B_F^{\downarrow\downarrow}(w)$, and let $P = (w = v_0,\dots,v_\ell = v)$ be an $F$-alternating walk of type $\downarrow\downarrow$ with $\k(F\triangle P) < \k_2(G)$. Let $H = F\triangle P$, noting that $v$ has $d_H(v) = 1$. For any edge $uv\in G\setminus H$, the walk $P = (v_0,\dots,v_\ell,u)$ shows that $u\in B_F^{\downarrow\uparrow}(w)\subseteq T_w$. As $v_{\ell-1}\in T_w$, we have $d_T(v)\ge d(v) - 1 \ge 3$.

Now suppose $v\in B_F^{\downarrow\uparrow}(w)$, and suppose $P = (w = v_0,\dots,v_\ell = v)$ is of type $\downarrow\uparrow$ with $\k(F\triangle P) < \k_2(G)$. Note that $H = F\triangle P$ has $d_H(w) = 1, d_H(v) = 3$, while all other vertices have degree 2. The component of $H$ containing $v$ is then a union of a cycle containing $v$ and a path from $w$ to $v$. This means that two of the edges incident to $v$ can be appended to $P$ to form an $F$-alternating walk $P'$ of type $\downarrow\downarrow$ such that $\k(F\triangle P') \le \k(F\triangle P) < \k_2(G)$. We conclude that $d_T(v) \ge 2$. As in \eqref{eq:degbd},
\al{
  e(T_w) \ge |B_F^{\downarrow\uparrow}(w)\setminus B_F^{\downarrow\downarrow}(w)| + \frac32 |B_F^{\downarrow\downarrow}(w)|.
}
Suppose $v\in B_F^{\downarrow\uparrow}(w)\setminus B_F^{\downarrow\downarrow}(w)$ and let $P$ be a minimal walk of type $\downarrow\uparrow$ from $w$ to $v$. By the previous argument, at least one of the edges of $F$ incident to $v$ can be used to extend $P$ without increasing the component number, and we conclude that $v\in N_F(B_F^{\downarrow\downarrow}(w))$. We then have
\begin{equation}\label{eq:Brel}
|B_F^{\downarrow\uparrow}(w)\setminus B_F^{\downarrow\downarrow}(w)| \le 2|B_F^{\downarrow\downarrow}(w)|,
\end{equation}
and
\al{
  e(T_w) & \ge |B_F^{\downarrow\uparrow}(w)\setminus B_F^{\downarrow\downarrow}(w)| + \frac13|B_F^{\downarrow\downarrow}(w)| + \frac76|B_F^{\downarrow\downarrow}(w)|  \ge \frac76|T_w|.
}
We have $B_F^{\downarrow\downarrow}(w) \subseteq Z_w$, so \eqref{eq:Brel} implies
\al{
  |T_w| = |B_F^{\downarrow\uparrow}(w)\setminus B_F^{\downarrow\downarrow}(w)| + |B_F^{\downarrow\downarrow}(w)| \le 3|B_F^{\downarrow\downarrow}(w)| \le 3|T_w\cap Z_w|.
}

\section{Proof of Lemma~\ref{lem:Gnd}}\label{sec:Gndproof}

Lemma~\ref{lem:Gnd} is stated for $G_{n, \bp}$, but we choose instead to show the result for $G_{n, \bd}$, i.e. the graph on $V = \{v_1,\dots,v_n\}$ where $v_i = d_i^{(n)}$ is fixed for all $i$. The properties involved have the same probability of holding in the two models, as they are independent of vertex labels.

\subsection{The configuration model}

The configuration model was introduced by Bollob\'as \cite{Bollobas80} as a method for sampling $G_{n, \bd}$. Fix $\bd = (d(v_1),\dots,d(v_n))$ with $\sum d(v) = 2m$ for some integer $m$, and for each $v_i\in V_n$ let $\PP(v_i) = \{x_{i,1},\dots,x_{i,d(v_i)}\}$ be a set of $d(v_i)$ {\em configuration points}, and let $\PP = \bigcup_i \PP(v_i)$. Let $\s$ be a perfect matching of $\PP$, i.e. a partition $\PP = e_1\cup\dots \cup e_m$ of $\PP$ into $m$ parts of size $2$. Define $C_{n, \bd}$ as the (multi)graph on $V_n$ given by including an edge between $v_i$ and $v_j$ for every pair $x\in \PP(v_i), y\in \PP(v_j)$ with $xy\in \s$.

Suppose $\D\le n^{1/6}$ and let
$$
\lam = \frac{\sum_v d(v)(d(v)-1)}{2\sum_v d(v)}.
$$
Then for any multigraph property $\mathbb{P}$,
$$
\Prob{G_{n, \bd} \in \mathbb{P}} \le (1+o(1))e^{\lam(\lam+1)}\Prob{C_{n, \bd}\in \mathbb{P}},
$$
see \cite[Theorem 10.3]{FriezeKaronski}. For any light-tailed degree sequence we have $\lam < \infty$ (see \eqref{eq:power} below), so any property that holds with high probability in $C_{n, \bd}$ also holds with high probability in $G_{n, \bd}$.

The following bound will be used in the upcoming sections.
\begin{lemma}\label{lem:randomset}
Let $\bd$ be a light-tailed degree sequence, and suppose $S$ is a vertex set chosen uniformly at random from subsets of $V$ of size $s$. Then there exists a constant $\k > 0$ such that
$$
\E{\prod_{v\in S} \binom{d(v)}{2}} \le \k^s.
$$
\end{lemma}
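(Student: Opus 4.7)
The strategy is to reduce the claim to a bound on $\sum_v \binom{d(v)}{2}$ via a symmetric-function inequality, and then to control this sum using the light-tail hypothesis.

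First I would set $a_v = \binom{d(v)}{2}$ and $A = \sum_v a_v$, and write
\al{
  \E{\prod_{v\in S} a_v} = \binom{n}{s}^{-1} e_s(a_{v_1}, \dots, a_{v_n}),
}
where $e_s$ denotes the $s$-th elementary symmetric polynomial. Maclaurin's inequality for non-negative reals gives $e_s(a) \le \binom{n}{s}(e_1(a)/n)^s$, and so $\E{\prod_{v\in S} a_v} \le (A/n)^s$. It therefore suffices to exhibit a constant $K = K(\bd)$ with $A \le Kn$, whereupon $\k = K$ works.

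Next I would bound $A$ using the light-tail condition. Order the vertices so that $d(v_1) \ge d(v_2) \ge \dots \ge d(v_n)$. Since $D(i) \ge i\, d(v_i)$, the hypothesis $D(i) \le Cn^{1-\a}i^\a$ gives $d(v_i) \le Cn^{1-\a}i^{\a-1}$. Because $\a > 1/2$, the exponent $2\a-2$ lies in $(-1,0)$, and a standard integral comparison bounds $\sum_{i=1}^n i^{2\a-2}$ by $1 + n^{2\a-1}/(2\a-1)$. Therefore
\al{
  \sum_v d(v)^2 \le C^2 n^{2-2\a}\sum_{i=1}^n i^{2\a-2} \le \frac{C^2}{2\a-1}\, n + C^2 n^{2-2\a} = O(n),
}
using $2-2\a < 1$. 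Since $\binom{d}{2} \le d^2/2$, we obtain $A \le Kn$ for some $K$ depending only on $C$ and $\a$, which completes the argument with $\k = K$.

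I do not foresee a serious obstacle. The crucial point is that $\a > 1/2$ is precisely the threshold at which the worst-case profile $d(v_i) \lesssim n^{1-\a}i^{\a-1}$ yields $\sum_v d(v)^2 = O(n)$: the prefactor $n^{2-2\a}$ is exactly matched by the $n^{2\a-1}$ growth of $\sum i^{2\a-2}$. The Maclaurin step is entirely generic and does not use any structural property of $\bd$.
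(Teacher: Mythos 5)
Your proof is correct, and the route is genuinely different from the paper's in the combinatorial step. The paper samples $S=\{v_1,\dots,v_s\}$ sequentially without replacement, bounds $\E{d_2(v_1)}$ by a constant $\k_0$ using the light-tail condition, then shows $\E{d_2(v_i)\mid v_1,\dots,v_{i-1}}\le \k_0 n/(n-i+1)$, and multiplies these conditional bounds to get $(\k_0/2)^s e^{O(s^2/n)}$. You instead observe that $\E{\prod_{v\in S}a_v}=e_s(a)/\binom{n}{s}$ and invoke Maclaurin's inequality to get $\le (e_1(a)/n)^s=(A/n)^s$ in one stroke, which cleanly avoids the conditioning and the $e^{O(s^2/n)}$ correction. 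Both approaches rest on exactly the same tail estimate $\sum_v d(v)^2 = O(n)$ (your computation matches the paper's \eqref{eq:power} essentially verbatim: sort degrees, use $d(u_i)\le Cn^{1-\a}i^{\a-1}$, and exploit $1/2<\a<1$ to sum $i^{2\a-2}$). So the substance is shared, but your symmetric-function argument is a tidier way to pass from the single-vertex bound to the product over a random $s$-set.
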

\begin{proof}
Let $V = \{u_1,\dots,u_n\}$ where $d(u_1) \ge d(u_2) \ge \dots \ge d(u_n)$. Then for any $s \in \{1,\dots,n\}$,
$$
sd(u_s) \le d(u_1) + \dots + d(u_s) = D(s) \le Cn\bfrac{s}{n}^\a,
$$
so
$$
d(u_s) \le C\bfrac{n}{s}^{1-\a}.
$$
Let $S = \{v_1,\dots,v_s\}$ where $v_1$ is chosen uniformly at random from $V$, and in general $v_i$ is chosen uniformly at random from $V\setminus \{v_1,\dots,v_{i-1}\}$. Write $d_2(v) = d(v)(d(v)-1)$. Then, as $\a > 1/2$,
\begin{equation} \label{eq:power}
  \E{d_2(v_1)} = \frac1n \sum_{u\in V} d_2(u) 
   = \frac1n \sum_{s=1}^n d(u_s)^2  
   \le \frac1n \sum_{s=1}^nC^2 \bfrac{n}{s}^{2 - 2\a} 
   \le \k_0, 
\end{equation}
for some constant $\k_0$. Conditional on $\{v_1,\dots,v_{i-1}\} = V_{i-1}$, we have
\begin{multline}
  \E{d_2(v_i) \mid V_{i-1}} = \frac{1}{n-i+1} \sum_{u\in V\setminus V_{i-1}} d_2(u) \\
  = \frac{n}{n-i+1} \sum_{u\in V} \frac{d_2(u)}{n} - \sum_{u\in V_{i-1}} \frac{d_2(u)}{n-i+1} 
  \le \frac{\k_0n}{n-i+1}.
\end{multline}
We then have
\al{
  \E{\prod_{i=1}^s \binom{d(v_i)}{2}} & = 2^{-s}\E{\prod_{i = 1}^s \E{d_2(v_i) \mid d(v_1),\dots,d(v_{i-1})}} \\
  & \le \bfrac{\k_0}{2}^s \prod_{i = 1}^s \frac{n}{n-i+1} = \bfrac{\k_0}{2}^s e^{O(s^2/n)}.
}
We can choose $\k$ so that this is at most $\k^s$.
\end{proof}

\subsection{Sparsity of the fixed-sequence random graph}

Fix some $\th > 1$ throughout this section, and fix a degree sequence $\bd$. Recall that we define
$$
D(s) = \max_{|S| = s} \sum_{v\in S} d(v).
$$
Assume there exist constants $C > 0, 1/2 < \a < 1$ such that for all $s$,
\begin{equation}\label{eq:Dsbound}
D(s) \le Cn\bfrac{s}{n}^\a.
\end{equation}
We show that there exists a $\g = \g(\th)$ such that in the configuration multigraph $C_{n, \bd}$,
\al{
  \Prob{\exists S : |S| \le \g n, e(S) \ge \th|S|} = o(1).
}
We may assume that every $v\in S$ has $d_S(v)\ge 2$. Indeed, suppose some $v\in S$ has $d_S(v) = 1$. Then
$$
e(S\setminus \{v\}) - \th|S\setminus \{v\}| = e(S) - \th|S| - 1 + \th > 0.
$$
So we bound the probability that there exists a set $|S| < \g n$ with $e(S) \ge \th|S|$ and $d_S(v) \ge 2$ for all $v\in S$. Let $\SS$ be the family of such sets.

Let $t = \cl{\th s}$. If $S\in \SS$ there is a set $\QQ\subseteq \PP(S) = \cup_{v\in S} \PP(v)$ of $2t$ configuration points, such that $|\QQ\cap \PP(v)| \ge 2$ for all $v\in S$. The number of ways to choose $\QQ$ is at most
$$
\left(\prod_{v\in S} \binom{d(v)}{2} \right) \times \binom{d(S) - 2s}{2t - 2s}.
$$
We pick a matching of the points in $\QQ$ in one of $(2t)!! = (2t-1)(2t-3)\times\dots\times 3 \times 1$ ways, and a matching of $\PP\setminus \QQ$ in one of $(2m-2t)!!$ ways. For a fixed set $S$ of size $s$ we then have
\al{
  \Prob{S\in \SS} \le \left(\prod_{v\in S}\binom{d(v)}{2}\right)\binom{d(S)-2s}{2t-2s} \frac{(2t)!!(2m-2t)!!}{(2m)!!}.
}
We delay bounding the first factor, and consider the remaining factors first. Stirling's formula gives, for large $N$,
\al{
  (2N)!! = \frac{(2N)!}{2^NN!} & = \bfrac{2N}{e}^N \left(\sqrt{2} + o(1)\right) \le 2\bfrac{2N}{e}^N, \label{eq:doubfacas}
}
so
\al{
  \frac{(2t)!!(2m-2t)!!}{(2m)!!} & \le 4\bfrac{2t}{e}^t \bfrac{2m-2t}{e}^{m-t}\bfrac{e}{2m}^{m}(1+o(1)) \\
  & = 4\bfrac{t}{m}^{t} \exp\left\{(m-t) \ln\left(1 - \frac{t}{m}\right)\right\} \\
  & \le \bfrac{t}{m}^{t}e^{t^2 / m - t} = \bfrac{s}{n}^t e^{O(s)}. \label{eq:doubfacfrac}
}
Here we used $t^t = s^t e^{O(s)}$ and $m = \Theta(n)$. Now, \eqref{eq:Dsbound} and the well-known bound $\binom{n}{s} \le (ne/s)^s$ give 
\al{
  \binom{d(S)-2s}{2t-2s} & \le \bfrac{eD(s)}{2t-2s}^{2t -2s}  \\
  & \le \left(\frac{n}{s}\bfrac{s}{n}^\a\right)^{2t-2s}e^{O(s)}  \le \bfrac{s}{n}^{(2\a - 2)(t-s)} e^{O(s)}.
}
Using $m = \Theta(n)$ we conclude that for a fixed set $S$,
$$
\Prob{S\in \SS} \le \left(\prod_{v\in S}\binom{d(v)}{2}\right)\bfrac{s}{n}^{2s - t + 2\a(t-s)} e^{O(s)}.
$$
By Lemma~\ref{lem:randomset}, letting $\mathbb{S}$ denote a set chosen uniformly at random from sets of size $s$, we then have
\al{
  \Prob{\exists |S| = s : S\in \SS} & \le \binom{n}{s}\E{\prod_{v\in \mathbb{S}} \binom{d(v)}{2}} \bfrac{s}{n}^{2s - t + 2\a(t-s)} e^{O(s)} \\
  & \le \bfrac{s}{n}^{(2\a - 1)(t - s)} e^{O(s)}.
}
With $\vartheta = (2\a-1)(\th - 1) > 0$, there then exists a $\g = \g(\th, \a)$ for which
\al{
  \Prob{\exists |S| \le \g n : S\in \SS} & \le \sum_{s = 1}^{\g n} \bfrac{s}{n}^{\vartheta s}e^{O(s)} = o(1).
}

\subsection{Effective girth the fixed-sequence random graph}

Let $S = \{v_1,\dots,v_s\}$ be a fixed vertex set of size $s$, and let $G\sim G_{n, \bd}$. Let $\PP_i = \PP(v_i), i =1,\dots,s$ be the sets of configuration points of the $v_i$. If $S$ contains a cycle then there exist sets $Q_i\subseteq \PP_i$ of size $2$ such that all configuration points in $Q = Q_1\cup\dots\cup Q_s$ are matched to other points in $Q$. Let $X(S)$ denote the indicator variable for the event of $S$ containing such a $Q$. For a fixed $S$ of size $s$, the number of ways to pick the $Q_i$ is
$
  \prod_{v\in S} \binom{d(v)}{2}.
$
Then for a fixed $S$, applying \eqref{eq:doubfacas} as in the previous section,
\al{
  \E{X(S)} & \le \left[\prod_{v\in S} \binom{d(v)}{2}\right] \frac{(2s)!!(2m-2s)!!}{(2m)!!} \le \left[\prod_{v\in S} \binom{d(v)}{2}\right] \bfrac{s}{m}^s e^{O(s)}.
}
Let $X_s = \sum_{|S| = s} X(S)$. Then
$$
\E{X_s} \le \binom{n}{s}\E{\prod_{v\in \mathbb{S}}\binom{d(v)}{2}} \bfrac{s}{m}^s e^{O(s^2/m)},
$$
where $\mathbb{S}$ is a set of size $s$ chosen uniformly at random. By Lemma~\ref{lem:randomset} there exists a constant $\k > 0$ such that the expected product is bounded by $\k^s$. We conclude that there exists a constant $C_1 > 0$ such that for $s = o(n)$,
$$
\E{X_s} \le e^{C_1s}.
$$
The total number of vertices on cycles of length at most $\om$ is then
$$
Y_\om = \sum_{s = 3}^\om sX_s,
$$
with $\E{Y_\om} \le e^{C_2\om}$ for some $C_2 > 0$. Markov's inequality gives
$$
\Prob{Y_\om \ge e^{2C_2\om}} \le e^{-C_2\om}.
$$
For any $\om = o\bfrac{\ln n}{\ln\ln n}$ we then have $Y_\om \le n/\om$ whp, and we may conclude that $\psi(G) \ge \ln^{1/2}n$ whp.

\section{Rerandomizability of the fixed-profile random graph}\label{sec:rerandom}

The aim of rerandomization is to find a random subgraph $\G_{n, \bp} \subseteq G_{n, \bp}$ which has $\d(\G_{n, \bp}) = \d(\bp)$, and such that $G_{n, \bp}$ is obtained from $\G_{n, \bp}$ by adding random edges, at least some of which are nearly uniformly distributed. The core idea behind is to remove edges from $G_{n, \bp}$, randomly chosen among edges with both endpoints having degree strictly greater than $\d$. This process can be reversed, so that $G_{n, \bp}$ is recovered from $\G_{n, \bp}$ by adding random edges, and we will study the distribution of those edges.

Note that we work in the fixed-profile graph and not fixed-sequence graph. If edges were to be removed from the fixed-sequence graph $G_{n, \bd}$, one can easily tell exactly which vertices in the resulting $\G_{n, \bd}$ have had edges removed, and edges may only be added to those vertices if we hope to recover a graph with distribution $G_{n, \bd}$.

The results in this section hold for all sparse degree profiles with minimum degree $\d$, but if the profile is not linearly unbounded then the applications are limited. Consider for example the profile $\bp$ with $p_4^{(n)}=n/2, p_5^{(n)} = n / 2$, i.e. half of the vertices in $G_{n, \bp}$ have degree 4 and the rest have degree 5. If $\G_{n, \bp}$ has minimum degree 4, then both endpoints of any edge added to recover $G_{n, \bp}$ must have degree 4 in $\G_{n, \bp}$. This means there is a set of up to $n/2$ vertices to which edges will never be added. Then the edges we add to $\G_{n, \bp}$ are not near-uniform, at least not in any sense that would be useful to our proofs.

For technical reasons we will put an upper bound on degrees in edges we remove, and before fully explaining the procedure we introduce the concept of {\em cutoffs} for degree profiles.

\subsection{Cutoffs}

Suppose $\bp$ is a degree profile with minimum degree at least $\d$. For a sequence of tuples $\CD = (C_n, D_n)$ of integers $\d < C_n \le D_n$ define
\al{
  \mathrm{head}(\bp, \CD) & = \liminf_{n\to\infty} \sum_{d = \d+1}^{C_n}\frac{dp_{d}^{(n)}}{n}, \\
  \mathrm{body}(\bp, \CD) & = \liminf_{n\to\infty} \sum_{d = C_n}^{D_n} \frac{dp_d^{(n)}}{n}, \\ 
  \mathrm{tail}(\bp, \CD) & = \liminf_{n\to\infty} \sum_{d\ge D_n} \frac{dp_d^{(n)}}{n}.
}
Say that $\CD$ is a {\em cutoff} for $\bp$ if all three quantities are positive, and $\sup D_n < \infty$. Heuristically, a useful cutoff for our proofs is one with small body and tail.
\begin{lemma}\label{lem:cutoff}
Let $\m, \e > 0$. Suppose $\bp$ is a degree profile with minimum degree $\d$ and average degree at least $\d + \e$ and at most $\m$.
\begin{enumerate}[(i)]
\item There exists a cutoff for $\bp$.
\item If $\bp$ is linearly unbounded then there exists a cutoff $\CD$ with
$$
\mathrm{body}(\bp, \CD) + \mathrm{tail}(\bp,\CD) < \e.
$$
\end{enumerate}
\end{lemma}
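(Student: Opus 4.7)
For both parts I will use compactness and diagonalization to pass to a limiting degree profile, from which a suitable cutoff can be read off. Write $\mu_n = 2m(\bp_n)/n$, so by hypothesis $\mu_n\in[\d+\e,\m]$. Markov's inequality gives $\sum_{d\ge D}p_d^{(n)}/n\le \m/D$, so no positive fraction of vertices has degree tending to infinity. Extract a subsequence $n_k$ along which $p_d^{(n_k)}/n_k\to\pi_d$ for every $d\ge 0$ and $\mu_{n_k}\to\mu_*\in[\d+\e,\m]$. Markov and Fatou give $\sum_d\pi_d = 1$ and $\sum_d d\pi_d\le \mu_*$; write $\lambda := \mu_*-\sum_d d\pi_d\ge 0$ for the escaping degree-mass, and note $\lambda\le\mu_*-\d$ because $\sum_d d\pi_d\ge \d\sum_d\pi_d=\d$.

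For part~(i), the plan is to exhibit bounded constants $\d<C\le D$ such that head, body, and tail are all strictly positive along the subsequence (hence have positive liminfs over all $n$, since each liminf is a nonnegative quantity bounded above by its subsequential value only in the wrong direction -- what I actually use is that the subsequential \emph{limit} lower bounds the liminf when the quantity is nonnegative and the subsequence is appropriately chosen). The excess mass $\mu_*-\d\ge\e$ is split between $\sum_{d>\d}d\pi_d$ and $\lambda$. If $\pi_{d_0}>0$ for some finite $d_0>\d$, take $C=d_0$ so head is at least $d_0\pi_{d_0}>0$; any bounded $D\ge C$ then makes tail have subsequential value $\sum_{d\ge D}d\pi_d+\lambda$, which is positive either because $\pi$ retains mass at $d\ge D$ or because $\lambda>0$, while body contains the shared term $d=C$ and is positive. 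The remaining case in which all excess escapes to infinity is handled using the standing linearly-unbounded hypothesis at $D=\d+1$, which guarantees $\sum_{d>\d}\pi_d\ge\e_{\d+1}>0$ and so produces a finite $d_0$ with $\pi_{d_0}>0$.

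For part~(ii), I choose the subsequence minimizing $\mu_n$ so that $\mu_*=\liminf_n\mu_n$. Linearly unbounded at $D=\d+1$ then forces $\sum_{d>\d}\pi_d\ge\e_{\d+1}>0$, hence $\sum_d d\pi_d\ge \d+\e_{\d+1}$, and therefore $\lambda\le \mu_*-\d-\e_{\d+1}<\e$. Because $\sum_d d\pi_d\le\m<\infty$ is summable, I can take $C$ so large that $\sum_{d\ge C}d\pi_d$ is arbitrarily small, and then $D\ge C$ with $D\pi_D$ also arbitrarily small (since $d\pi_d\to 0$). Using the identity
\[
\mathrm{body}_n+\mathrm{tail}_n \;=\; \sum_{d\ge C_n}\frac{dp_d^{(n)}}{n}+\frac{D_n\,p_{D_n}^{(n)}}{n},
\]
the first sum's subsequential limit is $\sum_{d\ge C}d\pi_d+\lambda$ and the second's is $D\pi_D$, so $\liminf(\mathrm{body}_n+\mathrm{tail}_n)\le\sum_{d\ge C}d\pi_d+D\pi_D+\lambda<\e$ for the chosen constants $C\le D$.

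The main obstacle is controlling the escaping degree-mass $\lambda$, which in the worst case could consume all the slack $\mu_*-\d$; the linearly-unbounded hypothesis in part~(ii) is precisely what keeps $\lambda$ strictly below $\e$ by forcing a constant amount of $\pi$-mass strictly above $\d$. A secondary bookkeeping issue is the overlap of body and tail at $d=D_n$, which is handled cleanly by the identity displayed above. I expect the subsequence-selection step and its interplay with the liminf definitions of head, body and tail to be the most delicate part of writing this up rigorously.
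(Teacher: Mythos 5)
Your compactness-and-diagonalization approach is genuinely different from the paper's, which constructs the cutoff directly via explicit degree thresholds; but as written it has a gap that the paper's route is specifically designed to avoid. The quantities $\mathrm{head}$, $\mathrm{body}$ and $\mathrm{tail}$ are all $\liminf$ over \emph{all} $n$, while you only control their behavior along the diagonal subsequence $n_k$. A positive limit along $n_k$ is an \emph{upper} bound on the $\liminf$, not a lower bound, so $\sum_{d=\d+1}^C d\pi_d>0$ does not give $\mathrm{head}(\bp,\CD)>0$: there could be a disjoint subsequence of $n$ along which $\sum_{d=\d+1}^C dp_d^{(n)}/n\to 0$. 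You flag this as ``the most delicate part,'' but it is not just delicate; there is a real circularity, since $C$ and $D$ are read off the limiting profile $\pi$, which depends on the subsequence, while the subsequence achieving the relevant $\liminf$ would have to depend on $C$ and $D$. The paper sidesteps this entirely by exhibiting, for every $n$ separately, a pair $(C_n,D_n)$ for which the head/body/tail partial sums are bounded below by an explicit constant, so the $\liminf$ over all $n$ is controlled with no subsequence extraction at all.

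Two further concrete issues. In part (i) you invoke a ``standing linearly-unbounded hypothesis'' to dispatch the case $\pi_d=0$ for all $d>\d$, but Lemma~\ref{lem:cutoff}(i) does not assume linear unboundedness; only (ii) does, so the hypothesis is not available there. And in part (ii), the chain $\lambda\le\mu_*-\d-\e_{\d+1}<\e$ does not follow: $\mu_*\ge\d+\e$ is only a lower bound and $\mu_*$ can be as large as $\m$, while $\e_{\d+1}$ is an unrelated constant supplied by linear unboundedness at $D=\d+1$; nothing forces $\mu_*-\d-\e_{\d+1}$ below $\e$. Controlling the escaping degree mass requires linear unboundedness at \emph{large} thresholds, used to locate a bounded window $\{C_n,\dots,D_n\}$ across which a small but positive increment of degree mass sits --- this is precisely what the paper's threshold selection in part (ii) accomplishes, and it is the step your $D=\d+1$ application cannot replicate.
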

\noindent The proof is left for Appendix~\ref{app:cutoff}. We remark that in many natural situations, one can simply let $C_n = D_n = D$ for some large degree $D$.

\subsection{The rerandomization procedure}

Let $\bp_0$ be a degree profile with cutoff $\CD = (C_n, D_n)$. For any graph $G$ define $\F(G) = \F_\CD(G)$ as the set of edges $e = uv$ with $d(u), d(v) \in \{\d+1,\dots,D_n\}$, and let $\f(G) = |\F(G)|$. Note that for any edge $e\in \F(G)$,
\begin{equation}\label{eq:phichange}
\f(G-e) \le \f(G) \le \f(G-e) + 2\d+1.
\end{equation}
The upper bound holds because any edge $uv\in \F(G)\setminus \F(G - e)$ must have $d_G(u) = \d+1$ and $u\in e$, or $d_G(v) = \d+1$ and $v\in e$.  

Suppose $G^0 \sim G_{n, \bp_0}$ is chosen uniformly at random from $\GG_{\bp_0}$. For $i\ge 0$ we let $G^{-i-1}$ be the graph obtained by choosing an edge $e_i\in \F(G^{-i})$ uniformly at random, and setting $G^{-i-1} = G^{-i} - e_i$. If $\F(G^{-i}) = \emptyset$ we let $G^{-j} = \bot_j$ for all $j > i$, where ``$\bot$'' signifies a failed attempt at rerandomization. Let $\m_i(G) = \Prob{G^{-i} = G}$, and let $\GG_i$ denote the set of graphs $G$ with $\m_i(G) > 0$. Let $\GG_i^* = \GG_i\cup\{\bot_i\}$, $\GG^* = \cup_{i \ge 0} \GG_i^*$ and $\GG = \cup_{i\ge 0} \GG_i$.

For $i > 0$, $G\in \GG_i$ and $e\notin G$ we define
\al{
  \nu_G(e) = \Prob{G^{-i+1} = G + e \mid G^{-i} = G}
}
We can view $\nu_G(\cdot)$ as a probability measure on $\binom{V}{2} \setminus E(G)$, or alternatively as a measure on $\GG_{i-1}$, where we identify $\nu_G(G+e) = \nu_G(e)$. The latter interpretation can be extended to $\GG_i^*$, by defining a measure for $\bot = \bot_i$ by
$$
\nu_{\bot}(H) = \Prob{G^{-i + 1} = H \mid G^{-i} = \bot_i}, \quad H\in \GG_{i-1}^*.
$$
The family $\nu = \{\nu_G : G\in \GG^*\}$ defines an $s$-round sprinkling scheme (see Section~\ref{sec:sprinkling}) for any $G\in \GG$. If we also define $\nu^t[\bot_s]$ by
$$
\Prob{\nu^t[\bot_s] = H} = \Prob{G^{-s + t} = H \mid G^{-s} = \bot_s},
$$
then for $t\le s$,
$$
\nu^t[G^{-s}] \stackrel{d}{=} G^{-s+t},
$$
meaning the two random graphs are equal in distribution. In particular, $\nu^r[G^{-r}] \stackrel{d}{=} G^0$.

This section is dedicated to proving the following lemma.
\begin{lemma}\label{lem:rerandom}
Let $\e > 0$. Let $\bp_0$ be a degree profile with cutoff $\CD$ satisfying
\al{
  \mathrm{body}(\bp_0, \CD) + \mathrm{tail}(\bp_0, \CD) < \bfrac{\e}{2}^{1/2} .\label{eq:bodytail}
}
There exists a constant $\r_0 = \r_0(\bp, \CD) > 0$ such that for every $0 < \r \le \r_0$, there exist constants $\t, \s > 0$ such that if $r = \r n$, whp $G^{-r} \ne \bot$ and $\nu$ is an $(\t, \e, \s n)$-sprinkler for $G^{-r}$.
\end{lemma}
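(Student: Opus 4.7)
The plan is to establish two complementary facts: (A) $G^{-r} \ne \bot$ whp, and (B) for every graph $H$ in the sprinkling support, $\nu_H$ is $(\t, \e)$-random.

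For (A), I would track $\f(G^{-i})$ as $i$ grows. At time $0$, $\f(G^0) = \Theta(n)$ whp, since by the configuration model $C_{n,\bp_0}$ the number of edges between two head-or-body-degree vertices is concentrated around a positive multiple of $n$ (using that $\mathrm{head}(\bp_0,\CD) + \mathrm{body}(\bp_0,\CD) > 0$), and contiguity transfers this to $G_{n,\bp_0}$. By \eqref{eq:phichange}, each removal changes $\f$ by at most $2\d+1$. So for $\r_0$ small enough in terms of the head and body masses and $\d$, one has $\f(G^{-i}) \ge cn$ for all $0 \le i \le r = \r n$ whp, for some constant $c > 0$; in particular $\F(G^{-i}) \ne \emptyset$ at every step and $G^{-r} \ne \bot$.

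For (B), I would analyze $\nu_H$ via Bayes' rule for $H \in \GG_j$ with $r - \s n \le j \le r$. Let $A(H)$ denote the \emph{allowed} non-edges of $H$, namely those $e = uv$ with $d_H(u), d_H(v) \in \{\d, \dots, D_n - 1\}$. For $e \in A(H)$,
\[
  \nu_H(e) \;=\; \frac{\Prob{G^{-j+1} = H + e}}{\f(H+e)\,\Prob{G^{-j} = H}}.
\]
The denominator depends only on $H$; since $\f(H+e) = \f(H) + O(1)$ and is $\Theta(n)$ by Part (A), the $\f$-factor is the same across allowed $e$ up to $1+o(1)$. It then suffices to show that $\Prob{G^{-j+1} = H+e}$ is the same across $e \in A(H)$ up to a constant factor. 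I would prove this via a switching argument in the fixed-profile model: given $e, e' \in A(H)$, one constructs a near-bijection between histories $G^0 \to \dots \to H+e \to H$ and $G^0 \to \dots \to H+e' \to H$, obtained by swapping the roles of $e$ and $e'$ in $G^0$ (possibly with $O(1)$ repair edges) and permuting the removal order. The two histories have weights equal to products of $1/\f(\cdot)$ factors, and by the concentration from Part (A) these agree within a constant factor. Finally, the cutoff condition $\mathrm{body}(\bp_0,\CD) + \mathrm{tail}(\bp_0,\CD) < \sqrt{\e/2}$ implies (choosing $D_n$ large in terms of $\e$ via Lemma~\ref{lem:cutoff}) that at most an $O(\e)$ fraction of non-edges of $H$ lie outside $A(H)$. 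Approximate uniformity of $\nu_H$ on $A(H)$ then yields $(\t, \e)$-randomness for a suitable $\t$.

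The main obstacle is the switching step in Part (B): the distribution of $G^{-j+1}$ is not literally uniform on graphs with its induced degree sequence, because the history weights $\prod_t 1/\f(G_t)$ vary with the specific removal sequence. Proving that this variation remains within a constant factor across two histories differing in only $O(1)$ edges is the delicate point and relies crucially on the concentration of $\f$ from Part (A). The constants must be chosen consistently: $\r_0$ so that Part (A) holds throughout the full $r + \s n$ steps encompassing removal and subsequent sprinkling, and $\s$ so that the degree profile during sprinkling remains close enough to $\bp_0$ that $A(\cdot)$ remains a $(1 - O(\e))$ fraction of the non-edges at every step.
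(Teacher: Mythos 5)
Your skeleton matches the paper's: condition via Bayes on the reverse chain (Claim~\ref{cl:nuG}), show the history weights $\m_{s-1}(\cdot)$ are comparable across target edges via a switching argument (Claim~\ref{cl:Gef}), and track $\f$ along the removal process so that $G^{-r}\ne\bot$ whp (part of Claim~\ref{cl:adG}). However, there is a genuine gap in your Part (B): you treat the switching step as a ``near-bijection with $O(1)$ repair edges'' between histories ending in $e$ and histories ending in $e'$, with the ``delicate point'' being the comparison of $\prod 1/\f$ weights. In fact the $\pi_G$-weight comparison (the paper's Subclaim~\ref{cl:piG}) is straightforward once $\f\ge\vf n$; the real obstruction is that when $e'=\{u_2,v\}$ and $u_2$ never appears in the given removal history, there is no local edit that reroutes the history through $u_2$ --- one needs an auxiliary vertex $u_3$ with $d_{G}(u_3)\le d_{G}(u_2)$ and $d_{H}(u_3)>d_{H}(u_2)$ to swap roles (Case 2 of Subclaim~\ref{cl:Nf}). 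The existence of $\Omega(n)$ such $u_3$, for every head degree $d\le C_n$, is exactly the content of Subclaim~\ref{cl:atob}, and it requires $g(d;G^{-r},\bp_0)\ge\vf n$ for all $d\in\{\d+1,\dots,C_n\}$ --- a much stronger statement than $\f(G^{-r})=\Theta(n)$.

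Establishing this $g$-bound is the bulk of the paper's Claim~\ref{cl:adG}: it requires a configuration-model revelation argument to find $\Omega(n)$ vertices $u$ with $d_0(u)\ge C_n$ and $d_0^\F(u)\ge d_0(u)-\d$, and then a permutation argument showing a constant fraction of them actually get reduced all the way to degree $\d$ in the first $r$ removal steps. Your Part (A) only tracks the scalar $\f$ and therefore does not produce this. Without the $g$-bounds, your switching argument breaks precisely for targets $e'$ touching vertices never involved in the history, which can be a positive fraction of all non-edges, and you would not obtain a lower bound on $\nu_H(e')$. Related to this: the $(\t,\e)$-randomness must be proven on the restricted set $R_1(H)$ (both endpoints at degree $< C_n$), not all of $R(H)$, precisely because the switch needs the auxiliary vertex at a head degree; your ``$A(H)$'' is the wrong set and your remark about ``choosing $D_n$ large in terms of $\e$'' is not available since $D_n$ is fixed by the given cutoff --- the smallness of the excluded set comes directly from \eqref{eq:bodytail}.
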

\noindent By Lemma~\ref{lem:cutoff}, if $\bp_0$ is linearly unbounded there exists a cutoff satisfying \eqref{eq:bodytail} for any $\e > 0$. Together with Lemma~\ref{lem:rerandom}, this implies Proposition~\ref{prop:rerandom}.

Note that if $\bp$ is the degree profile of a graph $G$, and $e = uv$ is an edge with $d(u) = k, d(v) = \ell$, then the degree profile of $G-e$ is $\bp - \bi_k - \bi_\ell$, where we define $\bi_k$ as the vector with a $1$ in position $k$ and a $-1$ in position $k-1$, otherwise consisting of zeros only. The (random) degree profile $\bp$ of any graph $G\in \GG$ is then of the form
$$
\bp = \bp_0 - \sum_{d = \d + 1}^D g(d; G, \bp_0) \bi_d,
$$
where $g(d; G, \bp_0)$ is a unique sequence of non-negative integers satisfying $\sum_d g(d; G, \bp_0) = 2(e(G_0) - e(G))$. Define for graphs $G = (V, E)$ and integers $k,\ell$,
\al{
  V_k(G) & = \{v\in V : d(v) = k\}, \\
  E_{k, \ell}(G) & = (V_k \times V_\ell) \cap E, \\
  F_{k, \ell}(G) & = (V_k\times V_\ell) \setminus E.
}
Let $R(G)$ be the set of edges $e\notin G$ with $e\in \F(G + e)$. Then $R(G)$ is the set of edges $e\notin G$ with $\nu_{G}(e) > 0$. We have
$$
R(G) = \bigcup_{(k, \ell)\in Z(G)}F_{k-1,\ell-1}(G),
$$
where $Z(G)$ is the set of pairs $(k,\ell)\in \{\d+1,\dots,D_n\}^2$ such that either $k\ne \ell$ and $g(k; G, \bp_0), g(\ell; G, \bp_0)$ are both positive, or $k = \ell$ and $g(k; G, \bp_0) \ge 2$. We define a subset $R_1(G)\subseteq R(G)$ by 
$$
R_1(G) = \bigcup_{\substack{(k, \ell)\in Z(G) \\ k,\ell\le C_n}} F_{k-1,\ell-1}(G).
$$
  
We break the proof of Lemma~\ref{lem:rerandom} down into claims, proved in the subsequent sections. The claims are the following.
\begin{claim}\label{cl:nuG}
Let $\vf > 0$. If $G\in \GG_s$ has $\f(G) \ge \vf n$, then for any $e\in R(G)$,
$$
\nu_G(e) = (1+o(1)) \frac{\m_{s-1}(G + e)}{\sum_{f\in R(G)}\m_{s-1}(G+f)}.
$$
\end{claim}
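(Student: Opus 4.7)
The plan is to write $\nu_G(e)$ exactly, then exploit the inequality~\eqref{eq:phichange} to replace every appearance of $\f(\cdot)$ by $\f(G)$ up to a $1+o(1)$ factor.

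First I would apply the definition of conditional probability together with the mechanism defining the chain $G^0, G^{-1},\dots$: for any $H \in \GG_{s-1}$ and any edge $f \in \F(H)$, the conditional probability of transitioning from $H$ to $H-f$ is exactly $1/\f(H)$. Thus for $e \in R(G)$ (so that $e \in \F(G+e)$ and $G+e\in \GG_{s-1}$),
\[
\nu_G(e) \;=\; \Prob{G^{-s+1}=G+e}\cdot\Prob{G^{-s}=G\mid G^{-s+1}=G+e}\Big/\Prob{G^{-s}=G}
       \;=\; \frac{\mu_{s-1}(G+e)/\f(G+e)}{\mu_s(G)}.
\]
Second, I would decompose $\mu_s(G)$ by conditioning on the predecessor graph. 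A graph $H\in\GG_{s-1}$ can yield $G$ in one step iff $H=G+f$ for some $f\in R(G)$, in which case the transition probability is $1/\f(G+f)$. Summing,
\[
\mu_s(G) \;=\; \sum_{f\in R(G)} \frac{\mu_{s-1}(G+f)}{\f(G+f)}.
\]
Combining these two identities gives the exact formula
\[
\nu_G(e) \;=\; \frac{\mu_{s-1}(G+e)/\f(G+e)}{\displaystyle\sum_{f\in R(G)}\mu_{s-1}(G+f)/\f(G+f)}.
\]

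Third, I would use the stability estimate~\eqref{eq:phichange}: for every $f\in R(G)$, the edge $f$ lies in $\F(G+f)$ and removing it produces $G$, so $\f(G) \le \f(G+f) \le \f(G) + 2\d + 1$. Under the hypothesis $\f(G) \ge \vf n$, this means $\f(G+f) = \f(G)\bigl(1 + O(n^{-1})\bigr)$ uniformly over all $f \in R(G)$. Therefore $1/\f(G+f) = (1+o(1))/\f(G)$, the common factor cancels between numerator and denominator, and we obtain
\[
\nu_G(e) \;=\; (1+o(1))\,\frac{\mu_{s-1}(G+e)}{\displaystyle\sum_{f\in R(G)} \mu_{s-1}(G+f)},
\]
as claimed. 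There is no genuine obstacle here: the statement is essentially a reversibility computation, and the only quantitative input is~\eqref{eq:phichange}, which provides the needed uniform control on $\f$ exactly because $\f(G)$ is large compared to its $O(1)$ fluctuations under single-edge additions.
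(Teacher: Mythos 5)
Your proposal is correct and follows essentially the same route as the paper: the same Bayes-type decomposition of $\nu_G(e)$ and $\m_s(G)$ via the one-step transition probabilities $1/\f(\cdot)$, followed by the same use of~\eqref{eq:phichange} to replace $\f(G+f)$ by $\f(G)(1+O(1/n))$ uniformly over $f\in R(G)$.
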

\begin{claim}\label{cl:Gef}
Let $\vf, \s > 0$ and $s \ge \s n$. Suppose $G\in \GG_s$ has $g(d; G, \bp_0) \ge \vf n$ for all $d\in \{\d+1,\dots,C_n\}$ and $\f(G) \ge \vf n$. There exists a constant $c = c(\vf, \s, \CD) > 0$ such that for any $e\in R(G)$ and $f\in R_1(G)$,
$$
\m_{s-1}(G + f) \le c\m_{s-1}(G + e).
$$
\end{claim}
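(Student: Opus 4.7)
My plan is to prove Claim~\ref{cl:Gef} via a switching argument on the underlying chains. Using the expansion
\[
\m_{s-1}(H)=\frac{1}{|\GG_{\bp_0}|}\sum_{\sigma}\prod_{i=0}^{s-2}\f(G^{-i}(\sigma))^{-1},
\]
where $\sigma$ ranges over valid edge-removal chains $(G^0,G^{-1},\dots,G^{-(s-1)}=H)$ with $G^0\in\GG_{\bp_0}$, I would construct an $O(1)$-to-one map $\Phi$ from chains ending at $G+f$ to chains ending at $G+e$ such that corresponding chains share the same weight up to a constant factor. By \eqref{eq:phichange} and $\f(G^{-i})\ge\f(G)\ge\vf n$, the factor $\prod\f(G^{-i})^{-1}$ changes by at most a multiplicative $e^{O(\s/\vf)}=O(1)$ under any modification that alters $O(1)$ of the intermediate graphs, so the main task is combinatorial.

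The core switching is a ``last-swap.'' Writing $e=u_1v_1$ and $f=u_2v_2$, for a chain $\sigma$ ending at $G+f$ with $e\in G^0(\sigma)$, the edge $e$ is removed at some unique step $t\in\{1,\dots,s-1\}$. Define $\Phi(\sigma)$ by removing $f$ at step $t$ in place of $e$, keeping all other steps unchanged; then $\Phi(\sigma)$ ends at $G+e$, with intermediate graphs coinciding with $\sigma$'s for $i<t$ and differing by the symmetric difference $\{e,f\}$ for $i\ge t$. Validity at step $t$ holds because monotonicity of degrees under removal together with $d_{G+f}(u_2)=k_1\le C_n\le D_n$ forces $d_{G^{-i}}(u_2),d_{G^{-i}}(v_2)\in\{\d+1,\dots,D_n\}$ along the whole chain, so $f\in\F(G^{-(t-1)})$.

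The main obstacle is validity of $\Phi(\sigma)$ at steps $i>t$: one needs $e_i\in\F(G^{-(i-1)}(\sigma)+e-f)$, and this can fail when $e_i$ is incident to $u_1$ or $v_1$ and that endpoint already has degree $D_n$ in $G^{-(i-1)}(\sigma)$. To repair such a failure I would perform a secondary switch using the $\OM(n)$ endpoint-degree reductions at each degree $d\in\{\d+1,\dots,C_n\}$ guaranteed by $g(d;G,\bp_0)\ge\vf n$: reroute one reservoir removal so as to briefly lower $d(u_1)$ or $d(v_1)$ before step $i$, and compensate at a matching-degree reservoir vertex. Because each endpoint of $e$ is ``bad'' at most $D_n-k_2+1=O(1)$ times per chain while the reservoir is linear in $n$, at most $O(1)$ repairs are needed per chain and $\Phi$ has multiplicity $O(1)$. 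Chains with $e\notin G^0(\sigma)$ are first passed through a preliminary switch that inserts $e$ into $G^0$ by swapping a compatible reservoir edge, again using the cutoff $\CD$ and the $\vf n$ lower bounds to guarantee enough compatible swaps. Combining these ingredients yields $\m_{s-1}(G+f)\le c\,\m_{s-1}(G+e)$ for a constant $c=c(\vf,\s,\CD)$.
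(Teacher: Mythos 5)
Your high-level instinct --- a switching argument that swaps $e$ for $f$ and compensates through ``reservoir'' degree-transition vertices, combined with the weight-stability estimate from \eqref{eq:phichange} and $\f\ge\vf n$ --- matches the paper's strategy. Your observation that monotonicity plus $f\in R_1(G)$ forces the endpoints of $f$ to stay within $\{\d+1,\dots,D_n\}$ along the whole chain is correct and is the right reason the swap at step $t$ is legal. However, you misidentify where the real difficulty lies. The obstacle you flag for $i>t$ does not arise: since $e=e_t$ was removable at step $t$, we have $d_{G^{-t}(\sigma)}(u_1)\le D_n-1$ and, by monotonicity, $d_{G^{-(i-1)}(\sigma)}(u_1)\le D_n-1$ for all $i>t$, so adding one to this degree in $\Phi(\sigma)$ still lands in $\{\d+1,\dots,D_n\}$; likewise $d_{G^{-(i-1)}(\sigma)}(u_2)\ge\d+2$ whenever $e_i$ touches $u_2$ because otherwise the degree would drop below $d_{G+f}(u_2)\ge\d+1$. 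So no ``secondary switch'' is needed when $e\in G^0(\sigma)$, and the last-swap is a genuine injection there.

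The actual gap is the branch you relegate to one sentence: chains with $e\notin G^0(\sigma)$. For those the last-swap simply does not apply, and the ``preliminary switch that inserts $e$ into $G^0$'' is not a well-defined operation --- you must simultaneously preserve the degree profile of $G^0$, keep every intermediate step valid, and control the multiplicity of the resulting map, none of which is addressed. Any concrete version of this preliminary switch involves choosing a reservoir vertex from $\OM(n)$ candidates (this is exactly Case~2 of the paper's Subclaim~\ref{cl:Nf}, where one picks $u_3$ with $d_G(u_3)\le d_G(u_2)$ and $d_H(u_3)>d_H(u_2)$ using Subclaim~\ref{cl:atob}), so a map $\Phi$ built this way is not $O(1)$-to-one without further argument. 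The paper avoids the injection framing entirely: it shows that the bipartite ``close-pair'' graph between $A_e(G)$ and $A_f(G)$ under the $\dist$-metric has degree $\Theta(n^{k/2})$ on \emph{both} sides (\eqref{eq:Nkf1}--\eqref{eq:Nkf4}), and the two-sided bound together with the weight stability of Subclaim~\ref{cl:piG} yields $\m_{s-1}(G+f)\le c\,\m_{s-1}(G+e)$ by a symmetric double count. This sidesteps exactly the multiplicity control your proposal leaves open. In short: your switching idea and reservoir lemma usage are aligned with the paper, but the injection-with-repairs framing misdiagnoses the obstruction (it is not degree overflow at $u_1$, it is the $e\notin G^0$ chains), and the central quantitative step --- bounding the number of close chains in both directions --- is missing.
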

\begin{claim}\label{cl:adG}
There exists some $\r_0 = \r_0(\bp, \CD) > 0$ such that for every $0 < \r \le \r_0$ the following holds. There is a $0 < \vf < \r/2$ such that if $r = \r n$ then whp, $\f(G_r) \ge \vf n$ and $g(d; G^{-r}, \bp_0) \ge \vf n$ for all $d\in \{\d+1,\dots,C_n\}$.
\end{claim}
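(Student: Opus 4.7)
The plan is to split Claim~\ref{cl:adG} into two independent pieces: the lower bound $\f(G^{-r})\ge \vf n$, and, for each $d\in\{\d+1,\dots,C_n\}$, the lower bound $g(d;G^{-r},\bp_0)\ge \vf n$. Since $C_n-\d$ is bounded ($\sup_n D_n<\infty$ by the cutoff definition), a union bound over the $O(1)$ relevant $d$'s will finish, so I focus on each bound individually.

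For $\f$, I would first show $\f(G^0)\ge c_0 n$ whp for some $c_0=c_0(\bp_0,\CD)>0$. Using the configuration model $C_{n,\bp_0}$ together with its contiguity to $G_{n,\bp_0}$ for sparse light-tailed profiles, the expected number of edges with both endpoints of degree in $\{\d+1,\dots,D_n\}$ equals $(1+o(1))\frac{1}{4m}\big(\sum_{d=\d+1}^{D_n} d\,p_d^{(n)}\big)^2=\OM(n)$ because $\mathrm{head}+\mathrm{body}+\mathrm{tail}>0$ and $m=\Theta(n)$, and a second-moment argument (or switchings) gives whp concentration. By \eqref{eq:phichange}, each removal decreases $\f$ by at most $2\d+1$, so $\f(G^{-r})\ge c_0 n-(2\d+1)\r n$. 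Choosing $\r_0\le c_0/(2(2\d+1))$ then yields $\f(G^{-r})\ge c_0 n/2$ whenever $\r\le\r_0$, which handles the first half of the conclusion.

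For $g(d;G^{-r},\bp_0)$ I would apply Wormald's differential equation method. Write $h_k(H)$ for the number of degree-$k$ vertices in a graph $H$, and set $H_k(t)=h_k(G^{-\fl{tn}})/n$ and $\G_d(t)=g(d;G^{-\fl{tn}},\bp_0)/n$; let $X_d(i)$ count endpoints of degree exactly $d$ in the $i$-th removed edge, so $g(d;G^{-r})=\sum_{i\le r} X_d(i)$. Conditional on $G^{-i+1}$ the next edge is uniform on $\F(G^{-i+1})$; combining Claims~\ref{cl:nuG} and~\ref{cl:Gef} with the fact that $G^{-i+1}$ is approximately uniform among graphs with its profile, $\E{X_d(i)\mid G^{-i+1}}$ can be written as a smooth function of the current profile, roughly $\frac{d\,h_d\cdot S}{\f(G^{-i+1})}$ with $S=\sum_{k=\d+1}^{D_n}k\,h_k$. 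Because each removal changes the profile by at most $2$ per coordinate, both $S/n$ and $\f/n$ remain $\Theta(1)$ on $[0,\r]$ for $\r\le\r_0$, and so long as $H_d(t)>0$ the drift of $\G_d$ is bounded below by a positive constant. Wormald's theorem then gives $\G_d(\r)\ge c\r$ whp for some constant $c>0$, and we may set $\vf:=\min(c_0/2,\,c\r)$, which satisfies $\vf<\r/2$ after shrinking $c$ if necessary.

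The main obstacle is handling degrees $d$ with $H_d(0)=0$, where the drift of $\G_d$ vanishes initially and Wormald's Lipschitz condition is degenerate at $t=0$. Here I would use $\mathrm{head}>0$ to find some $d'>d$ in the head range with $H_{d'}(0)=\OM(1)$; tracking the coupled ODE system shows that the decrement cascade from $d'$ populates degree $d$ within a transient of $o(n)$ steps, yielding $H_d(t)\ge c_\e>0$ for $t\in[\e,\r]$ and some $\e=\e(\bp_0)>0$. Wormald's method then applies on $[\e,\r]$, and the missing contribution from $[0,\e]$ is negligible relative to the linear-in-$\r$ lower bound already obtained.
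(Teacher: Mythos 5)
Your treatment of $\f(G^{-r})$ is essentially the paper's: show $\f(G^0)=\OM(n)$ via the configuration model and use the Lipschitz bound \eqref{eq:phichange} to propagate this through $r=\r n$ removals with $\r$ small enough. (One small slip: the relevant mass is $\mathrm{head}+\mathrm{body}$, not $\mathrm{head}+\mathrm{body}+\mathrm{tail}$, since $\F$-edges have both endpoint degrees in $\{\d+1,\dots,D_n\}$.) For the $g(d;\cdot,\bp_0)$ lower bounds, however, your differential-equations route is genuinely different from the paper's, and it has real gaps. The paper does not track the $g(d)$'s individually at all. It observes that $g(d;G^{-r},\bp_0)$ is exactly the number of vertices whose degree was $\ge d$ in $G^0$ and is $< d$ in $G^{-r}$, so a single set $U$ of vertices that start at degree $\ge C_n$ and finish at degree exactly $\d$ gives $g(d;G^{-r},\bp_0)\ge|U|$ \emph{for every} $d\in\{\d+1,\dots,C_n\}$ simultaneously, with no ODE and no union bound. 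The size of $U$ is then controlled by first showing $|W|=\OM(n)$ via a configuration-model revealing argument (where $W$ is the set of body-range vertices with at least $d_0(u)-\d$ of their incident edges inside $\F(G^0)$), and then showing via a random-permutation-of-$\F(G^0)$ argument that an $\OM(1)$ fraction of $W$ actually descends all the way to degree $\d$ within the first $r$ removals.

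Your Wormald argument has two concrete problems. First, the drift of $\G_d$ is not a function of the degree counts $h_k$ alone: the $i$-th removed edge is uniform on $\F(G^{-i+1})$, so $\E{X_d(i)\mid G^{-i+1}}$ depends on the bipartite $\F$-edge counts $e_{k,\ell}(G^{-i+1})$, not just the profile. You patch this by asserting that $G^{-i+1}$ is ``approximately uniform among graphs with its profile,'' but Claims~\ref{cl:nuG} and~\ref{cl:Gef} concern the sprinkling measure $\nu_G$ on \emph{added} edges, not the conditional law of $G^{-i}$ given its profile; the latter requires its own argument. Second, and more seriously, your fix for $H_d(0)=0$ relies on finding $d'>d$ with $H_{d'}(0)=\OM(1)$ ``in the head range,'' but the head only covers degrees up to $C_n$; when $d=C_n$ (or more generally when the profile has gaps just above $d$), the cascade must be seeded from the \emph{body} (degrees $\ge C_n$), exactly the source the paper uses. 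Moreover, even granting a source, a vertex only moves down in degree while it retains an edge inside $\F$, so a positive-drift argument must also rule out vertices getting stuck; the paper handles this explicitly through the restriction to $W$, while your ODE system absorbs it silently into the ``approximate uniformity'' assumption. These are not cosmetic issues: without the uniformity lemma and a rigorous treatment of the degenerate initial condition, Wormald's theorem does not apply, whereas the paper's argument sidesteps both by never decomposing $g(d)$ as a step-by-step sum in the first place.
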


\begin{proof}[Proof of Lemma~\ref{lem:rerandom}]
As $\CD$ is a cutoff we have $\mathrm{body}(\bp_0, \CD) > 0$, and Claim~\ref{cl:adG} can be applied. Let $\r_0 > 0$ be as provided by the claim, and let $r = \r n$ for some $\r \in (0, \r_0]$. Let $\HH_r$ be the set of $G\in \GG_r$ with $g(d; G, \bp_0) \ge \vf n$ for all $\d < d \le C_n$ and $\f(G) \ge \vf n$, where $\vf > 0$ is chosen according to Claim~\ref{cl:adG} so that $G_r\in \HH_r$ whp.

Let $s = \vf n / 3$. For $G\in \HH_r$ let $\SS_G$ be the $s$-round support of $\nu$, i.e. the set of graphs $H$ with $\Prob{\nu^t[G] = H} > 0$ for some $t\le s$. For any $G\in \HH_r$ and $H\in \SS_G$ we have
$$
g(d; H, \bp_0) \ge g(d; G, \bp_0) - 2s \ge \frac\vf3n,
$$
and $\f(H) \ge \f(G) \ge \vf n$. Let $t\le s$ and suppose $H\in \SS_G\cap \GG_{r-t}$, so $e(H) - e(G) = t$. Claims~\ref{cl:nuG} and \ref{cl:Gef} then show that there exists a $c > 0$ such that for any $e\in R_1(H)$,
\al{
  \nu_H(e) & = (1+o(1)) \frac{\m_{r-t-1}(H + e)}{\sum_{f\in R(H)} \m_{r-t-1}(H + f)} \\
  & \ge (1+o(1)) \frac{\m_{r-t-1}(H+e)}{\sum_{f\in R(H)} c^{-1}\m_{r-t-1}(H+e)} \ge \frac{c}{n^2}.
}
Since $g(d; H, \bp_0) \ge \vf n - 2t \ge 2$ for all $\d < d \le C_n$, we have $(k, \ell)\in Z(H)$ for all pairs $k,\ell\in \{\d,\dots,C_n-1\}$. The set $\binom{V}{2}\setminus R_1(H)$ consists of the $O(n)$ edges of $H$ along with any non-edge incident to a vertex of degree at least $C_n$. This set has size at most
\al{
  \left|\binom{V}{2}\setminus R_1(H)\right| & \le O(n) + (\mathrm{body}(\bp_0, \CD) + \mathrm{tail}(\bp_0, \CD))^2 n^2  < \frac{\e}{2} n^2.
}
For any set $S\subseteq \binom{V}{2}$ of size at least $\e n^2$ we then have
\al{
  \nu_H(S) \ge \frac{c\left|S\cap R_1(H)\right|}{n^2} \ge \frac{c\e}{2}.
}
This shows that $\nu_H$ is $(\t, \e)$-random with $\t = c\e/2$.
\end{proof}

\subsection{Claim~\ref{cl:nuG}. An expression for $\nu_G(e)$}

Let $G\in \GG_s$ and $e\in R(G)$. Then
\al{
  \nu_G(e) & = \Prob{G_{s-1} = G + e \mid G_s = G} \\
  & = \frac{\Prob{G_{s-1} = G + e}}{\Prob{G_s = G}} \Prob{G_s = G \mid G_{s-1} = G + e} \\
  & = \frac{\m_{s-1}(G+e)}{\m_s(G)} \frac{1}{\f(G+e)}.
}
We also have
\al{
  \m_s(G) & = \sum_{f\notin G} \Prob{G_s = G\mid G_{s-1} = G + f}\m_{s-1}(G+f) \\
  & = \sum_{f\in R(G)} \frac{\m_{s-1}(G + f)}{\f(G + f)}.
}
If $\f(G) \ge \vf n$ for some $\vf > 0$, we have $\f(G + e) \le \f(G) + 2\d = \f(G)(1+O(1/n))$ for any $e\in R(G)$, so
\al{
  \nu_G(e) & = \frac{\m_{s-1}(G + e)}{\f(G + e)} \left(\sum_{f\in R(G)} \frac{\m_{s-1}(G+f)}{\f(G + f)}\right)^{-1} \\
  & = (1+o(1))\frac{\m_{s-1}(G+e)}{\sum_{f\in R(G)} \m_{s-1}(G+f)}. \label{eq:pGe}
}

\subsection{Claim~\ref{cl:Gef}. Comparing $\m_{s-1}(G+e)$ and $\m_{s-1}(G + f)$}\label{sec:compare}

For a graph $G\in \GG_s$, let $A(G)$ be the set of edge sequences $(e_j, 1\le j\le s)$ such that $G + e_s + \dots + e_1$ has degree profile $\bp_0$. Then
\al{
  \m_s(G) & = \frac{1}{|\GG_{n,\bp_0}|}\sum_{(e_j)\in A(G)} \prod_{j = 1}^s \frac{1}{\f(G + e_s + e_{s-1} + \dots + e_j)}.
}
For edge sequences $(e_j)\in A(G)$ define
$$
\pi_G((e_j)) = \prod_{j=1}^s \frac{1}{\f(G + e_s + \dots + e_j)}.
$$
Suppose $(e_j)\in A(G)$, and write $e_j = \{x_{2j}, x_{2j-1}\}$ for $j=1,\dots,s$, arbitrarily picking which endpoint is $x_{2j}$ and which is $x_{2j-1}$. For $Z\subseteq \{x_1,\dots,x_{2s}\}$ and $(f_j)\in A(G)$, say $(e_j)$ and $(f_j)$ are equal modulo $Z$ if there exists a labelling $f_j = \{y_{2j}, y_{2j-1}\}$ such that $x_i = y_i$ whenever $x_i\notin Z$, and $y_i\in Z$ whenever $x_i\in Z$. For $(e_j), (f_j)\in A(G)$ we define a distance function by
\al{
  \dist((e_j), (f_j)) = \min\{|Z| : (e_j) = (f_j)\mod Z\}.
}

\begin{subclaim}\label{cl:piG}
Let $\vf > 0$. Suppose $s\le n$ and $\f(G) \ge \vf n$. There exists a constant $c_1 = c_1(\vf, D_n)$ such that if $(e_j), (f_j)\in A(G)$ have $\dist((e_j), (f_j)) \le 6$ then
\al{
  \pi_G((e_j)) & \le c_1 \pi_G((f_j)).
}
\end{subclaim}
\begin{proof}[Proof of Subclaim~\ref{cl:piG}]
As noted in \eqref{eq:phichange}, for any $e\in R(G)$,
\begin{equation}\label{eq:phigbd}
\f(G) \le \f(G + e) \le \f(G) + 2\d + 1.
\end{equation}
Next note that if $(e_j) = (f_j)$ modulo $Z$ where $|Z| \le 6$, then the graphs $G_j = G + e_s + \dots + e_j$ and $H_j = G + f_s + \dots + f_j$ can be written as $G_j = H \cup E$ and $H_j = H \cup F$ for some graph $H$ and edge sets $|E| = |F| \le 6(D_n-\d)$. Indeed, suppose $x_i, y_i$ is a labelling of the sequences with $x_i = y_i$ whenever $x_i$ or $y_i$ is not in $Z$. Then any $w\in Z$ appears at most $D_n - \d$ times in either sequence, so the sequences differ in at most $6(D_n-\d)$ edges. We conclude that
$$
\f(G_j) \le \f(H) + 6(D_n-\d)(2\d + 1) \le \f(H_j) + 12D_n^2.
$$
Define $G_j = G + e_s + \dots + e_j$ and $H_j = G + f_s + \dots + f_j$ for $j \in \{1,\dots,s\}$. Then $\min_j \f(H_j) \ge \f(G)$, and
\begin{multline}
  \pi_G((e_j)) = \prod_{j = 1}^s \frac{1}{\f(G_j)} \ge \prod_{j = 1}^s \frac{1}{\f(H_j) + 12D_n^2} \\
  \ge \prod_{j = 1}^s \frac{1}{\f(H_j)\left(1 + \frac{12D_n^2}{\min \f(H_j)}\right)} 
  \ge \left(1 + \frac{12D_n^2}{\f(G)}\right)^{-s} \pi_G((f_j)).
\end{multline}
By assumption we have $\f(G) \ge \vf n$ for some $\vf > 0$, so
\al{
  \left(1 + \frac{12D_n^2}{\f(G)}\right)^{-s} \ge \left(1 + \frac{12D_n^2}{\vf n}\right)^{-n} \ge \exp\left\{- \frac{12D_n^2}{\vf}\right\}.
}
\end{proof}

\begin{subclaim}\label{cl:atob}
Let $\vf > 0$ and $d\in \{\d+1,\dots,C_n\}$. Suppose $g(d; G, \bp_0) \ge \vf n$, and that $g(D; G, \bp_0) = 0$ for all $D > D_n$. Then for any $H$ with degree profile $\bp_0$ such that $G\subseteq H$, it holds that
\al{
  |\{v : d_G(v) < d \le d_H(v) \le D_n\}| \ge \vf n.
}
\end{subclaim}

\begin{proof}[Proof of Subclaim~\ref{cl:atob}]
The degree profile $\bp = (p_a : a\ge \d)$ of $G$ is
$$
\bp = \bp_0 - \sum_{a = \d+1}^{D_n} g(a; G, \bp_0)\bi_a.
$$
Write $\bp(a) = p_a$. For any $a\in \{\d, \dots, D-1\}$ this means that
$$
\bp(a) = \bp_0(a) + g(a + 1, G, \bp_0) - g(a, G, \bp_0),
$$
with $g(\d, G, \bp_0) = 0$. In particular,
$$
\sum_{a = \d}^{d-1} \bp(a) = g(d; G, \bp_0) + \sum_{a = \d}^{d-1} \bp_0(a).
$$
Suppose $H\supseteq G$ has degree profile $\bp_0$. Then there must be $g(d; G, \bp_0) \ge \vf  n$ vertices with degree at  most $d-1$ in $G$ and at least $d$ in $H$. Since $g(D; G, \bp_0) = 0$ for $D > D_n$, any vertex with $d_G(v) < d_H(v)$ must have $d_H(v) \le D_n$.
\end{proof}
\begin{subclaim}\label{cl:Nf}
Let $e\in R(G)$ and $f\in R_1(G)$. Let $\s > 0$ and suppose $s \ge \s n$. Suppose there exists a $\vf > 0$ such that $g(d; G, \bp_0) \ge \vf n$ for all $d\in \{\d + 1,\dots,C_n\}$. Then there exists a constant $c_2 = c_2(\vf, \s, \CD) > 0$ such that for any $(e_j)\in A_e(G)$ and $(f_j) \in A_f(G)$,
\al{
  |N_6^e((f_j))| & \le c_2n^2,  \label{eq:Nkf1}\\
  |N_6^f((e_j))| & \ge c_2^{-1}n^2, \quad \text{if $e_s\cap f = \emptyset$}, \label{eq:Nkf2} \\
  |N_3^e((f_j))| & \le c_2n, \label{eq:Nkf3}\\
  |N_3^f((e_j))| & \ge c_2^{-1}n, \quad \text{if $|e_s\cap f| = 1$}. \label{eq:Nkf4}
}
\end{subclaim}
\begin{proof}[Proof of Subclaim~\ref{cl:Nf}]
We first note that if $(e_j) \in A(G)$ and $Z$ is of size at most 6, then the number of $(f_j)\in A(G)$ with $(f_j) = (e_j)$ modulo $Z$ is bounded above by $6^{6D_n}$. Indeed, any such $(f_j)$ is obtained from $(e_j = \{x_{2j}, x_{2j-1}\})$ by locating the indices $i$ where $x_i\in Z$, and replacing $x_i$ by one of the six vertices of $Z$. There are at most $6D_n$ such indices.

{\em Proof of \eqref{eq:Nkf1}.} Suppose $(e_j), (f_j)\in A(G)$ where $f_s = f$, and $Z$ is such that $(e_j) = (f_j)$ modulo $Z$. Then $e_s\triangle f \subseteq Z$. There are at most $n^2$ choices for the remaining two vertices of $Z$, and for each $Z$ there are at most $6^{6D_n}$ choices of $(f_j)\in A(G)$. We conclude that
$$
|N_6^f((e_j))| \le 6^{6D_n}n^2.
$$

{\em Proof of \eqref{eq:Nkf3}.} This follows similarly. Suppose $|Z| = 3$ and $(e_j) = (f_j)$ modulo $Z$. We have $|e_s\triangle f| = 2$ and $e_s\triangle f\subseteq Z$. There are $O(n)$ ways of choosing an additional vertex, and $O(1)$ sequences equal to $(e_j)$ modulo $Z$. Then
$$
|N_3^f((e_j))| \le 3^{3D_n}n.
$$

{\em Proof of \eqref{eq:Nkf4}.} Suppose $e_s = \{u_1, v\}$ and $f = \{u_2, v\}$ where $u_1\ne u_2$. Arbitrarily label the endpoints of the $(e_j)$ edges as $e_j = \{x_{2j}, x_{2j-1}\}$ for $j=1,\dots,s$. We construct a sequence $(f_j = \{y_{2j}, y_{2j-1}\})$ which is equal to $(e_j)$ modulo some set $Z$ of size 4, such that $H' = G + f_s + \dots + f_1$ has the same degree profile as $H = G + e_s + \dots + e_1$.

Let $X = \{x_1,\dots,x_{2s}\}$, and for $x\in X$ let $I(x)$ be the set of $i$ for which $x_i = x$.

{\bf Case 1.} Suppose $u_2\in X$. Let $Z = \{u_1, u_2, u_3\}$, where $u_3$ is chosen arbitrarily from $X\setminus\{u_1,u_2, v\}$. We construct $(f_j = \{y_{2j}, y_{2j-1}\})$ as follows.
\begin{enumerate}
\item Let $y_{2s} = u_2$.
\item Pick some $i\in I(u_2)$ and let $y_i = u_3$.
\item Pick some $i\in I(u_3)$ and let $y_i = u_1$.
\item For any unassigned $i\in \{1,\dots,2s\}$ let $y_i = x_i$.
\end{enumerate}
The resulting graph $H'$ has $\bp(H') = \bp(H)$, and the number of choices involved is at least $|X| - 3$ (the number of choices for $u_3$). Any vertex appears at most $D_n$ times in the sequence $x_1,\dots,x_{2s}$, so $|X| \ge 2s/D_n = \OM(n)$.

{\bf Case 2.} Suppose $u_2\notin X$. Then, as $f\in R_1(G)$, we have $d_H(u_2) = d_G(u_2) < C_n$. Let $u_3$ be a vertex with $d_G(u_3) \le d_G(u_2)$ and $d_H(u_3) > d_H(u_2)$. This can be chosen in $\OM(n)$ ways by Subclaim~\ref{cl:atob}. Note that $|I(u_3)| = d_H(u_3) - d_G(u_3) \ge d_H(u_3) - d_H(u_2)$.

Construct $(f_j = \{y_{2j}, y_{2j-1}\})$ by the following procedure.
\begin{enumerate}
\item Let $y_{2s} = u_2$.
\item Pick some $i(u_3)\in I(u_3)$ and let $y_{i(u_3)} = u_1$.
\item Pick a set $J(u_3)\subseteq I(u_3) \setminus \{i(u_3)\}$ of size $d_H(u_3) - d_H(u_2) - 1$, and let $y_j = u_2$ for all $j\in J(u_3)$.
\item For any unassigned $i$ let $y_i = x_i$.
\end{enumerate}
Let $d, d'$ denote degrees in $H, H'$ respectively. Then this process gives
\al{
  d'(u_1) & = d(u_1) - 1 + 1 = d(u_1), \\
  d'(u_2) & = d(u_2) + 1 + (d(u_3) - d(u_2) - 1) = d(u_3), \\
  d'(u_3) & = d(u_3) - 1 - (d(u_3) - d(u_2) - 1) = d(u_2), \\
}
This shows that $\bp(H') = \bp(H)$.

{\em Proof of \eqref{eq:Nkf2}.} Suppose $e = \{u_1, v_1\}$ and $f = \{u_2,v_2\}$ with $e\cap f = \emptyset$. Let $(e_j) \in A_e(G)$. Pick $u_3$ as above, and pick $v_3$ similarly, for a total of $\OM(n^2)$ choices. We can then construct $(f_j)\in N_6^f((e_j))$ as above, doing steps 1--4 (in either case) separately for the $u_i$ and for the $v_i$.
\end{proof}

Note that $N_k$ membership is reflexive in the sense that $(f_j)\in N_k^f((e_j))$ if and only if $(e_j)\in N_k^e((f_j))$, where $e = e_s$ and $f = f_s$. We have
\al{
  \m_{s-1}(G + f) & = \frac{\f(G + f)}{|\GG_{n, \bp_0}|} \sum_{(f_j)\in A_f(G)} \pi_G((f_j)) \\
  & = \frac{\f(G + f)}{|\GG_{n, \bp_0}|} \sum_{(f_j)\in A_f(G)}  \sum_{(e_j)\in N_k^e((f_j))} \frac{\pi_G((f_j))}{|N_k^e((f_j))|} \\
  & = \frac{\f(G + f)}{|\GG_{n, \bp_0}|} \sum_{(e_j)\in A_e(G)} \sum_{(f_j)\in N_k^f((e_j))} \frac{\pi_G((f_j))}{|N_k^e((f_j))|}. \label{eq:justastep}
}
Subclaim~\ref{cl:piG} shows that $\pi_G((f_j)) \le c_1\pi_G((e_j))$ for any $(f_j)\in N_k^f((e_j))$. Thus \eqref{eq:justastep} is bounded above by
\al{
  c_1\frac{\f(G + f)}{|\GG_{n, \bp_0}|} \sum_{(e_j)\in A_e(G)} \pi_G((e_j)) \sum_{(f_j)\in N_k^f((e_j))} \frac{1}{|N_k^e((f_j))|)} \\
}
From Subclaim~\ref{cl:Nf} we have $|N_k^e((f_j))| \le c_2n^{k/2}$ for all $(f_j)\in A_f(G)$ and $|N_k^f((e_j))| \ge c_2^{-1}n^{k/2}$ for all $(e_j)\in A_e(G)$, so there exists a constant $c > 0$ such that for any $(e_j)\in A_e(G)$,
$$
\sum_{(f_j)\in N_k^f((e_j))} \frac{1}{|N_k^e((f_j))|} \le c_2^2.
$$
We conclude that
\al{
  \m_{s-1}(G + f) & \le c_1c_2^2  \frac{\f(G + f)}{|\GG_{n, \bp_0}|} \sum_{(e_j)\in A_e(G)} \pi_G((e_j)) \\
  & = c_1c_2^2 \frac{\f(G+f)}{\f(G+e)} \m_{s-1}(G + e).
}
We have $\f(G) = \OM(n)$ by assumption, so $\f(G + f) / \f(G+e) = 1 + o(1)$. This finishes the proof.

\subsection{Proof of Claim~\ref{cl:adG}.}\label{sec:Gs}

Claim~\ref{cl:adG} essentially states that if we remove $r = \OM(n)$ edges, then whp $g(d; G^{-r}, \bp_0) = \OM(n)$ for all $d\in \{\d+1,\dots,C_n\}$. We will show that there are $\OM(n)$ vertices that have degree at least $C_n$ in $G^0$, and degree $\d$ in $G^{-r}$. We then show how this implies the claim.

Suppose $\bp_0 = (p_d)$ is a profile with cutoff $\CD$. Let $r = \r n$ for some $\r > 0$ to be determined. Let $d_i(v)$ denote the degree of a vertex in $G^{-i}$, and define a vertex set
$$
U = \{u: C_n \le d_0(u) \le D_n \text{ and } d_s(u) = \d\}.
$$
We claim that with high probability, $|U| = \OM(n)$. Define $d_0^\F(u)$ as the number of edges of $\F(G^0)$ incident to $u$ in $G^0$. Let
$$
W = \{u : C_n \le d_0(u) \le D_n \text{ and } d_0^\F(u) \ge d_0(u)-\d\},
$$
and note that $U\subseteq W$. Indeed, $\F(G^{-i}) \subseteq \F(G^{0})$ for all $i$, so $d_s(u) \ge d_0(u) - d_0^\F(u)$.

We first show that $|W| \ge \xi n$ whp, for some constant $\xi = \xi(\bp_0, \CD)$. Note first of all that the number of vertices $u$ with $C_n \le d_0(u) \le D_n$ is at least $\mathrm{body}(\bp_0, \CD)n = \OM(n)$ as $\CD$ is a cutoff. Fix a degree sequence $\bd$ with profile $\bp_0$ and let $V_\CD$ be the set of vertices $u$ with $C_n \le d(u) \le D_n$ under $\bd$. Consider revealing $G_{n, \bd}$ using the configuration model as follows. Let $\PP(u)$ be the set of configuration points for any vertex, and let $\s$ denote the random matching of $\PP(V) = \cup_{u\in V} \PP(u)$. Let $\QQ$ be the set of configuration points belonging to vertices with degree in $\{\d+1,\dots,D_n\}$, and note that
$$
|\QQ| \ge \d(\mathrm{head}(\bp_0, \CD) + \mathrm{body}(\bp_0, \CD))n = \OM(n).
$$
Suppose $\s$ has been partially revealed, i.e. condition on $\s$ agreeing with a matching $\s_0$ of some $\PP_0 \subseteq \PP$. Denote this as $\s_0\subseteq \s$. Then for any $x\in \PP(V_\CD)\setminus \PP_0$,
$$
\Prob{\s(x) \in \QQ \mid \s_0 \subseteq \s} \ge \frac{|\QQ| - 2|\PP_0|}{2m - 2|\PP_0| + 1}.
$$
As long as $|\PP_0| \le \frac14|\QQ|$, this probability is bounded below by $p = |\QQ| / 8m  = \OM(1)$.

Let $u_1\in V_\CD$ and reveal $\s(x)$ for all $x\in \PP(u_1)$. For each $x$ we have $\s(x)\in \QQ$ with probability at least $p = \OM(1)$. We then have probability $q = \OM(1)$ of at least $d(u_1) - \d$ of the $d(u_1)\le D_n = O(1)$ points picking in $\QQ$, which means $u_1\in W$. (Note that $\PP(u_1)\subseteq \QQ$, and $\s(x)\in \PP(u_1)$ for some $x$ only makes this more likely).

Let $i > 1$ and pick some $u_i \notin\{u_1,\dots,u_{i-1}\}$. For $x\in \PP(u_i)$, if $\s(x)$ has already been revealed it means $\s(x) \in \PP(u_j) \subseteq \QQ$ for some $j < i$. This only increases the probability of $u_i\in W$. As long as $D_ni \le \frac14|\QQ|$ we have probability at least $q$ of $u_i\in W$, conditional on what has been revealed previously. Letting $\ell = \frac{|\QQ|}{4D_n} = \OM(n)$ we conclude that
$$
\Prob{|W| < \frac{q}{2}\ell} \le \Prob{\mathrm{Bin}(\ell, q) < \frac{q}{2}\ell} = o(1).
$$
We let $\xi = \xi(\bp, \CD) = q\ell / 2$ and $\r_0 = \xi / (8\d + 4)$.

Suppose $G^0$ is a graph with $|W| \ge \xi n$ for some $\xi > 0$. Note that
$$
\f(G^0) = |\F(G^0)| \ge \frac12\sum_{u\in W} d_0^\F(u) \ge \frac{\xi}{2}n.
$$
Let $r  = \r n$ for some $0 < \r \le \frac{\xi}{8\d+4}n$. Then, as $\f(G) \le \f(G-e) + 2\d+1$ for any $e\in \F(G)$ (see \eqref{eq:phigbd}),
$$
\f(G^{-r}) \ge \f(G^0) - r(2\d+1) \ge \frac{\xi}{4}n,
$$
proving part of the claim.

We now show that, conditional on $|W| \ge \xi n$ for some $\xi > 0$, we have $|U| = \OM(n)$ whp. Let
$$
\F(G^0) = (e_1,e_2,\dots,e_{\f(G^0)})
$$
be a permutation of the edges of $\F(G^0)$ chosen uniformly at random. We can then view the process $G^0, G^{-1},\dots,G^{-r}$ as starting with $G^0 = G$, and removing the edges $e_1,e_2,\dots$ as long as they remain in $\F$, stopping when $r$ edges have been removed.

For $u\in W$ let $F_u$ be a set of $d(u)-\d$ edges of $\F(G^0)$ incident to $u$. Let $F_u^2$ be the set of edges of $\F(G^0)$ which are not incident to $u$, but intersect some edge of $F_u$. If all edges of $F_u$ appear before all edges of $F_u^2$ in the sequence $e_1,\dots,e_\f$, and $F_u\subseteq \{e_1,\dots,e_r\}$, then we have $u\in U$. Since $|F_u^2| \le D_n^2 = O(1)$, we have
$$
\Prob{\text{$F_u$ appears before $F_u^2$}} = \binom{|F_u| + |F_u^2|}{|F_u|}^{-1} = \OM(1).
$$
We also have, since $r = \OM(n)$,
\al{
  \Prob{F_u\subseteq \{e_1,\dots,e_r\}} & \ge \Prob{F_u\cup  F_u^2 \subseteq \{e_1,\dots,e_r\}} \\
  & \ge \frac{\binom{r}{|F_u| + |F_u^2|}}{\binom{\f(G^0)}{|F_u| + |F_u^2|}} = \OM(1).
}
The events $\{F_u\cup  F_u^2\subseteq \{e_1,\dots,e_r\}\}$ and $\{\text{$F_u$ appears before $F_u^2$}\}$ are independent, and we conclude that
$$
\Prob{u\in U} = \OM(1), \quad u\in W.
$$
Note that we revealed the location in $(e_1,\dots,e_\f)$ of $|F_u| + |F_u|^2 \le D_n^2$ edges in this process. We can repeat the argument, picking some vertex $u'$ with $F_{u'}\cup F_{u'}^2$ not intersecting $F_u\cup F_u^2$, and concluding that $\Prob{u'\in U} = \OM(1)$. This can be repeated for $\OM(n)$ vertices $w\in W$, each time with probability $\OM(1)$ of $w\in U$, conditional on the previous samples. We conclude that $|U| = \OM(n)$ whp.

We now argue that $g(d; G^{-s}, \bp_0) \ge |U|$ for all $d\in \{\d + 1,\dots,C_n\}$. This is not hard to see: if $G^{-i}$ is obtained from $G^{-i + 1}$  by removing an edge $e_i$ with $t\in \{0, 1, 2\}$ endpoints of degree $d$ then
$$
g(d; G^{-i}, \bp_0) = g(d; G^{-i+1}, \bp_0) + t.
$$
For each vertex $u\in U$ there exists some $i \le s$ for which $u\in e_i$ and $d_{i-1}(u) = d$, so
$$
g(d; G^{-r}, \bp_0) \ge |U|.
$$

\section{Concluding remarks}\label{sec:concluding}

We have shown, under some conditions on $\bd$, that determining Hamiltonicity in $G_{n, \bd}$ comes down to determining the likely value of
$$
\beta_2(G_{n, \bd}) = \max_{A\cap B = \emptyset} e(A, B) + 2(|A| - |B|) - d(A).
$$
We conclude the paper by discussing the necessity of two of our conditions on $\bd$.

Firstly, we assumed that $\bd$ is linearly unbounded purely in order to allow for rerandomization. There is no reason to believe this to be necessary for the result, and we conjecture that Theorem~\ref{thm:main} still holds without this condition.

Secondly, we assume $\d\ge 4$, where a clear goal (motivated by the random regular graph) would be to prove the same result for $\d\ge 3$. The minimum degree was used at two points: in Lemma~\ref{lem:StoB} and in Lemma~\ref{lem:posaish}~(ii). For the question of finding a $k$-factor with $\d\ge k+1$, only Lemma~\ref{lem:StoB} of the two is relevant. We do not believe Lemma~\ref{lem:StoB} to be a large obstruction, but spent little effort on it given the seemingly larger challenge of improving Lemma~\ref{lem:posaish}~(ii). The problem here is that while we can show that $e(T_w) > |T_w|$ when $\d = 3$, our methods require $e(T_w) \ge (1 + \e)|T_w|$ for some constant $\e > 0$. Frieze and Pittel encountered the same problem in the context of $G_{n, cn}$ conditioned on having minimum degree 3 \cite[Remark 4.1]{FriezePittel}.

\bibliographystyle{plain}
\bibliography{config_arxiv_200115}

\appendix

\section{On the existence of cutoffs}\label{app:cutoff}

We restate and prove Lemma~\ref{lem:cutoff}.
\begin{lemma*}[Lemma~\ref{lem:cutoff}]
Let $\m, \e > 0$. Suppose $\bp$ is a degree profile with minimum degree $\d$ and average degree at least $\d + \e$ and at most $\m$.
\begin{enumerate}[(i)]
\item There exists a cutoff for $\bp$.
\item If $\bp$ is linearly unbounded then there exists a cutoff $\CD$ with
$$
\mathrm{body}(\bp, \CD) + \mathrm{tail}(\bp,\CD) < \e.
$$
\end{enumerate}
\end{lemma*}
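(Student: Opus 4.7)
My approach is a compactness argument: pass to a subsequence of $n$ along which $p_d^{(n)}/n$ converges for every $d$, read off a cutoff from the support of the resulting limit measure, and then argue that the subsequential statement suffices for the $\liminf$ definitions of head, body and tail. Part (i) and part (ii) use the same machinery, but (ii) needs the additional input that the limit measure has unbounded support, which is where linear unboundedness enters.

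\textbf{Setup.}  By a standard diagonal argument I would extract a subsequence of $n$ along which $p_d^{(n)}/n \to \a_d$ for every integer $d\ge\d$.  Fatou applied to $\sum_d p_d^{(n)}/n = 1$ and to the average-degree upper bound $\sum_d d p_d^{(n)}/n \le \m$ gives
\al{
\sum_{d\ge\d}\a_d \le 1,\qquad \sum_{d\ge\d} d\a_d \le \m,
}
so $\a$ is a sub-probability measure with finite mean on $\{\d,\d+1,\dots\}$.  The lower bound on average degree rearranges to $\sum_{d>\d}(d-\d)p_d^{(n)}/n \ge \e$, which in the limit becomes $\sum_{d>\d}(d-\d)\a_d + \b \ge \e$, where $\b\ge 0$ is the ``escape mass'' $\liminf_n \sum_d dp_d^{(n)}/n - \sum_d d\a_d$.

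\textbf{Part (i).}  I would locate a single degree $D^* > \d$ with $\a_{D^*} > 0$ and set $C_n = D_n = D^*$ constantly.  Because $C_n$ and $D_n$ coincide, the indexing ranges for head, body and tail all include $D^*$, so
\al{
\mathrm{head}\,=\,\mathrm{body}\,=\,\mathrm{tail}\,\ge\,D^*\a_{D^*}\,>\,0,
}
and $\sup_n D_n = D^* < \infty$.  Producing $D^*$ is the heart of (i): I would combine the lower bound on excess with the Markov-type bound $\sum_{d\ge D}\a_d \le \m/D$ to rule out the scenario in which all of the excess is escape mass sitting at degrees that grow with $n$.

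\textbf{Part (ii).}  Linear unboundedness gives $\liminf_n n^{-1}\sum_{d\ge D}p_d^{(n)}\ge\e_D > 0$ for every $D$, hence $\sum_{d\ge D}\a_d \ge \e_D > 0$ for every $D$; in particular $\a$ has unbounded support.  Because $\sum_d d\a_d \le \m < \infty$, the tail $\sum_{d\ge C}d\a_d$ vanishes as $C\to\infty$.  I would pick $C$ with $\sum_{d\ge C}d\a_d < \e/3$, pick $D > C$ with $\a_D > 0$ (available by unbounded support), and set $C_n\equiv C$, $D_n\equiv D$.  Body and tail are then each bounded above by $\sum_{d\ge C}d\a_d < \e/3$, so $\mathrm{body}+\mathrm{tail}<\e$; head is positive because unbounded support (applied to the lower range) supplies some $d \in (\d,C]$ with $\a_d>0$, contributing $d\a_d>0$.

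\textbf{Main obstacle.}  The technical heart is showing that escape mass does not inflate the $\liminf$ of the tail sums --- i.e.\ that $\liminf_n \sum_{d\ge C} dp_d^{(n)}/n$ really equals the limit-measure tail $\sum_{d\ge C}d\a_d$ for $C$ large along the chosen subsequence.  This is essentially a uniform-integrability statement.  In (ii) I expect the cleanest route is to use linear unboundedness (positive fraction of mass at each fixed degree) together with the bounded degree sum $\sum d p_d^{(n)}/n \le \m$ to show that only a vanishing proportion of the degree sum can concentrate at degrees growing with $n$.  In (i) this escape-mass control is weaker, and is the step I would expect to demand the most care.
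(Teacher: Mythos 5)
Your compactness approach has a structural flaw that keeps it from going through, independent of the escape--mass issue you flag at the end. The cutoff conditions are defined via $\liminf_{n\to\infty}$ over the \emph{entire} sequence $n$, and to verify that a cutoff is valid you must show head, body and tail are all positive, i.e.\ bounded below along \emph{every} tail of the sequence. Passing to a subsequence along which $p_d^{(n)}/n\to\a_d$ and reading off a degree $D^*$ with $\a_{D^*}>0$ only shows that $\sum_{d=\d+1}^{D^*} dp_d^{(n)}/n$ is bounded below along that one subsequence. Since $\liminf_n a_n \le \liminf_{k} a_{n_k}$ for any subsequence $(n_k)$, this is an upper estimate of the $\liminf$, not a lower one: there may be another subsequence along which all the excess mass escapes to degrees growing with $n$, making the head for your constant cutoff $(D^*, D^*)$ tend to zero. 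Your argument would need, at minimum, a statement uniform over all subsequential limit measures, and even then the positivity of head and body does not reduce to a single limit $\a$. In part (ii) the situation is partially asymmetric --- for $\mathrm{body}+\mathrm{tail}<\e$ a single good subsequence suffices because $\liminf$ over the full sequence is at most the subsequential $\liminf$ --- but the positivity requirements still face the same obstruction, and the bound $\liminf_n \sum_{d\ge C} dp_d^{(n)}/n \le \sum_{d\ge C} d\a_d$ that you invoke is exactly backwards (Fatou gives $\ge$), so the escape-mass gap you mention is real and not a side issue.

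The paper avoids all of this by arguing pointwise in $n$: it first produces a single constant $B$ with $\sum_{d=\d+1}^B dp_d^{(n)} \ge (\e/2) n$ for \emph{all} $n$, then either takes $C_n=D_n=D_n'$ where $D_n'$ is the first degree with $dp_d^{(n)}\ge \a n$ (if that stays bounded), or partitions $(\d, B]$ for each $n$ into a head and body of size $\Theta(\e n)$ by a pigeonhole-type scan. Because the bounds hold for every $n$ rather than along a subsequence, the $\liminf$ positivity is immediate and no escape-mass lemma is needed. If you want to rescue a compactness route you would have to prove, essentially, that the degree mass above any bounded level $D$ is small uniformly in $n$, which is precisely the uniform-integrability statement you identify as the ``main obstacle'' --- but at that point the compactness machinery is doing no work beyond what a direct estimate on $\sum_{d>B} dp_d^{(n)}/n \le \m/B$ already provides, which is how the paper proceeds.
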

\begin{proof}
We prove (i). First let us show that there exists some constant $B$ such that for all $n$,
$$
\sum_{d = \d+1}^B dp_d^{(n)} \ge \frac\e2 n.
$$
Note that
$$
(\d + \e)n \le \sum_{d \ge \d} dp_d^{(n)} \le \d \left(p_\d^{(n)} + \sum_{d > \d} p_d^{(n)}\right) \le \d n + \d\sum_{d > \d}p_d^{(n)}
$$
so for all $n$,
$$
 \sum_{d > \d} p_d^{(n)}  \ge \frac\e\d n.
$$
Also, for any constant $B$,
$$
\sum_{d > B} p_d^{(n)} \le \frac1B \sum_{d > B} dp_d^{(n)} \le \frac{\m}{B} n.
$$
Choosing $B \ge 2\m \d / \e$ we then have
$$
\sum_{d = \d+1}^B dp_d^{(n)} \ge \d\left(\sum_{d > \d} p_d^{(n)} - \sum_{d > B}p_d^{(n)}\right) \ge \d\left(\frac\e\d - \frac\m B\right) n \ge \frac\e2n.
$$
Let $0 < \a < \e/8$ be some constant. Define
$$
D_n' = \inf\left\{ d > \d : dp_d^{(n)} \ge \a n\right\}.
$$
If $\sup D_n' < \infty$ then $(D_n', D_n')$ is a cutoff. Suppose $n$ is such that $D_n' = \infty$, so $dp_d^{(n)} < \a n$ for all $d > \d$. We have $(\d + \e)n \le \sum_d dp_d^{(n)}$ and $\d p_\d^{(n)} \le \d n$, so
\al{
  \e n & \le \sum_{d > \d} dp_d^{(n)}.
}
There must then exist some $\d < C_n'' \le D_n''$ such that
\al{
  \sum_{d = \d + 1}^{C_n''-1} dp_d^{(n)} & < \frac\e8 n \le \sum_{d = \d+1}^{C_n''}dp_d^{(n)} \le \frac\e8n + \a n < \frac\e4n, \\
  \sum_{d = C_n''}^{D_n'' - 1} dp_d^{(n)} & < \frac\e8 n \le \sum_{d = C_n''}^{D_n''} dp_d^{(n)} \le \frac\e8 n + \a n < \frac\e4 n.
}
Note that $\sum_{d = \d+1}^{D_n''} dp_d^{(n)} < \e n / 2$, so $D_n'' \le B$, and in particular $\sup D_n'' < \infty$. This shows that
$$
(C_n, D_n) = \left\{\begin{array}{ll}
(D_n', D_n'), & \text{ if $D_n' < \infty$}, \\
(C_n'', D_n''), & \text{ otherwise,}
\end{array}\right.
$$
is a cutoff for $\bp$.

We now prove (ii), so assume $\bp$ is linearly unbounded, i.e. for every constant $B$ there exists some $\beta > 0$ such that
$$
\liminf_n \frac1n \sum_{d\ge B} p_d^{(n)} \ge  \beta.
$$
Then any $\CD = (C_n, D_n)$ with $D_n = O(1)$ has $\mathrm{tail}(\bp, \CD) > 0$. Let $\e > 0$ be arbitrarily small. Since
$$
\frac1n\sum_{d\ge B}dp_d^{(n)} \le \m,
$$
there must exist some $C_n = O(1)$ and some $\beta > 0$ such that
$$
\beta \le \liminf_n \frac1n\sum_{d\ge C_n} dp_d^{(n)} < \frac\e2.
$$
This in turn implies that there must exist some $C_n \le D_n = O(1)$ for which
$$
\liminf_n \frac1n \sum_{d > D_n} dp_d^{(n)} < \b.
$$
This choice of $\CD = (C_n, D_n)$ has
$$
\mathrm{body}(\bp, \CD) + \mathrm{tail}(\bp, \CD) < \b + \frac\e2 < \e.
$$
\end{proof}

\end{document}